\newtheorem{thm}{Theorem}[section]
\newtheorem{lem}[thm]{Lemma}
\newtheorem{pro}[thm]{Proposition}
\newtheorem{cor}[thm]{Corollary}
\theoremstyle{remark}
\newtheorem{remark}[thm]{Remark}
\theoremstyle{ass}
\newtheorem{exa}[thm]{Example}
\newcommand{\R}{\mathbb{R}}
\newcommand{\Rd}{\mathbb{R}^{d}}
\newcommand{\D}{\nabla_{A}}
\newcommand{\F}{\mathcal{F}}
\newcommand{\Dr}{\mathcal{D}_{r}}
\numberwithin{equation}{section}
\title[Magnetic Hamiltonians]{The forward problem for the electromagnetic Helmholtz equation with critical singularities}
\author[J.A. Barcel\'o, L. Vega, M. Zubeldia]{Juan Antonio Barcel\'o$^{1}$, Luis Vega$^{2}$ and Miren Zubeldia$^{3}$\\
\\
$^{1}$ \emph{ETSI de Caminos, Universidad Polit\'ecnica de Madrid, 28040, Madrid, Spain}\\
\\
$^{2, 3}$ \emph{Departamento de Matem\'aticas, Universidad del Pa\'is Vasco, Apartado 644, 48080, Bilbao, Spain.}
}
\date{\today}
\thanks{First and second authors were supported by the Spanish Grant MTM2011-28198. Third author was partially supported by the Spanish grant FPU AP2007-02659 of the MEC, by the Spanish Grant MTM2007-62186 and by the ERC Advanced Grant-Mathematical foundations (ERC-AG-PE1) of the European Research Council}
\subjclass[2010]{35B25, 35B45, 35J05, 35P25, 35Q60, 47A10.}
\keywords{singular potentials, magnetic Schr\"odinger operator, uniform estimates, cross-section, spectral theory}
\begin{document}

\maketitle

\begin{abstract}
We study the forward problem of the magnetic Schr\"odinger operator with potentials that have a strong singularity at the origin. We obtain new resolvent estimates and give some applications on the spectral measure and  on the solutions of the associated evolution problem.  
\end{abstract}

\section{Introduction}

Let us consider the magnetic Schr\"odinger operator
\begin{equation}
H_{A} = \D^{2} + V
\end{equation}
with
$$
\D := \nabla + iA.
$$
Here $A: \Rd \to \Rd$ is the magnetic vector potential and $V:\Rd \to \R$ is the electric scalar potential. The magnetic potential $A$ describes the interaction of a free particle with an external magnetic field. The magnetic field that corresponds to a magnetic potential $A$ is given by the $d \times d$ anti-symmetric matrix defined by
\begin{equation}\label{rotacional}
B=(DA)-(DA)^{t}, \quad B_{kj}=\left(\frac{\partial A_{k}}{\partial x_{j}}- \frac{\partial A_{j}}{\partial x_{k}}\right) \quad k,j=1,\ldots,d.
\end{equation}
In dimension $d=3$, $B$ is identified by the vector field $curl \, A$ via the vector product
\begin{equation}
B v = curl \, A \times v, \quad \forall v \in \R^{3}.
\end{equation}
Of particular interest is the tangential part of $B$ given as
\begin{equation}\label{tangen0}
B_{\tau}(x) = \frac{x}{|x|}B(x), \quad \quad \quad \quad (B_{\tau})_{j} = \sum_{k=1}^{d}\frac{x_{k}}{|x|} B_{kj}
\end{equation}
 Observe that $B_{\tau} \cdot x = 0$ for any $d\geq 2$ and  therefore $B_{\tau}$ is a tangential vector field in any dimension.

In this paper we are interested in the study of some new resolvent estimates for the electromagnetic Helmholtz equation
\begin{equation}\label{magneticequation5}
(\nabla + iA)^{2}u + Vu + \lambda u \pm i\varepsilon u = f \quad \quad \quad \varepsilon >0,
\end{equation}
with critical singularities on the potentials. More precisely we are able to consider potentials  such that $V$ and $|x|^{2}|B_{\tau}|^{2}$  can have singularities at the origin as $\frac{(d-2)^2}{ 4|x|^{2}}$. In addition, our results are true for singular magnetic potentials $A$ such that $B_{\tau}=0$. Relevant examples of magnetic potentials that we can consider are the following. 
\begin{exa}\label{ejemploB1}
Take
\begin{equation}\label{Apoten}
A= \frac{C}{x^{2} + y^{2} + z^{2}}(-y, x, 0) = \frac{C}{x^{2}+y^{2}+z^{2}}(x,y,x) \times (0,0,1).
\end{equation}
One can easily check that
\begin{equation}\nabla \cdot A =0, \quad \quad B=-2C\frac{z}{(x^{2}+y^{2}+z^{2})^{2}}(x,y,z), \quad \quad B_{\tau}=0.
\end{equation}
\end{exa}
\begin{exa}\label{ejemploB2}
Another (more singular) example is the well known Aharonov-Bohm (A-B) potential
\begin{equation}
A=C\left(\frac{-y}{x^{2}+y^{2}}, \frac{x}{x^{2}+y^{2}}, 0 \right) = \frac{1}{x^{2}+y^{2}}(x,y,z)\times (0,0,1).
\end{equation}
Here we have $B=C(0,0,\delta)$, where $\delta$ is the Dirac's delta function and we obtain $B_{\tau}=0$. Note that we can consider this example in the two dimensional case. \end{exa}

\begin{exa}\label{ejemploB3}
Potentials that satisfy 
\begin{equation}
x \cdot A(x) =0 \quad \quad \text{and} \quad \quad \lambda A(\lambda x) = A(x),
\end{equation}
for all $\lambda>0$.
Notice that the previous two examples satisfy both properties.
In fact it is observed in \cite{FVV}, and easy to check, that these two properties imply that $B_{\tau}=0$.
\end{exa}

It is interesting to recall that if $A$ is regular then one can always assume that
\begin{equation}
x \cdot A(x) =0,
\end{equation}
which  is known as the Cr\"omstrom gauge. This is done by considering $A-\nabla m$ instead of $A$, with  the function $m(x)$ given by
\begin{equation}\label{m}
m(x)=\sum_{j=1}^{d} x_{j} \int_{0}^{1} A_{j}(tx) dt, \quad \quad (x\in\Rd).
\end{equation}
Observe that for the examples given above $m$ can not be defined. In addition, if the Cr\"omstrom gauge is used then 
\begin{equation}
A_k(x)=\sum_{j=1}^{d}  \int_{0}^{1}tx_{j} B_{jk}(tx) dt, \quad \quad x\in\Rd,\qquad k=1,\ldots,d,
\end{equation}
and therefore $A=0$ if $B_\tau=0$.

Let us introduce the framework in which we work in the sequel. From now on we assume that $H_{A}$ is a self-adjoint operator in $L^{2}(\Rd)$ with form domain $D(H_{A})=: H^{1}_{A}(\Rd) = \{\phi \in L^{2}(\Rd) : \int |\D \phi|^{2}  <\infty\}$. In particular this is achieved if
\begin{equation}\label{assV4}
\int |V||u|^{2} \leq \nu\int |\nabla_{A} u|^{2} +C_\nu \int | u|^{2}\quad \quad 0 <  \nu < 1
\end{equation}
and
\begin{equation}\label{localintegrability}
A_{j}\in L^{2}_{loc}.
\end{equation}
See the standard references \cite{CFKS}, \cite{LS} for details. From the self-adjointness of the operator $H_{A}$ we deduce the existence of solution of the equation (\ref{magneticequation5}) in the Hilbert space $H_{A}^{1}(\Rd)$.

Note that the A-B potential given in example \ref{ejemploB2} does not satisfy \eqref{localintegrability}. For this particular example we will consider the Friedrichs extension.

In order to state the first result of this paper we need to impose the following assumptions:
\begin{itemize}
\item [(\bf{H1})] 
\begin{equation}\notag
\int V|u|^{2} \leq \nu\int |\nabla_{A} u|^{2} \quad \quad 0 <  \nu < 1.
\end{equation}
\item [(\bf{H2})] 
\begin{align}
\int (\partial_{r}(rV))_{-} |u|^{2} < A_{V} \int |\nabla_{A} u|^{2},\notag
\end{align}
\begin{align}
\left(\int |x|^{2}|B_{\tau}|^{2}|u|^{2}\right)^{1/2} < A_{B}\left(\int |\nabla_{A} u|^{2} \right)^{1/2},\notag
\end{align}
\end{itemize}
with 
\begin{equation}\label{peque}
A_{V} + 2A_{B} < 1,
\end{equation}
where $r= |x|$, $\partial_{r} = \frac{x}{|x|}\cdot \nabla $, and  $(x)_- =-min(x,0)$.

Observe that if we replace the integrals on the right-hand side of the above hypotheses by $\int |\nabla u|^{2}$, then by the diamagnetic inequality
\begin{equation}\label{diamagnetic}
|\nabla |f|| \leq |\D f|,
\end{equation}
(see \cite{LL}) we obtain (H1) and (H2).




\begin{exa}\label{ex}
For $d\geq 3$, let us take
\begin{equation}
(\partial_{r}(rV))_{-} \leq \frac{\nu_{1}}{|x|^{2}},\quad \quad |B_{\tau}|\leq \frac{\nu_{2}}{|x|^{2}},
\end{equation}
where
$$
\frac{4\nu_{1}}{(d-2)^{2}} + \frac{2\nu_{2}}{(d-2)} < 1.
$$
More concretely we can take, 
\begin{equation}
V= \frac{\nu_{1}}{|x|^{2}}\quad \textrm{with} \quad 0<\nu_{1}< \frac{(d-2)^{2}}{4}.
\end{equation}
Then, one can easily check that $(H1)$ and $(H2)$ hold.
\end{exa}

\begin{exa}\label{exam}
One can also work with an electric potential $V$ of Coulomb type and a long range magnetic potential $A$. In fact, if we take
\begin{equation}
V(x) = \frac{V_{\infty}\left(\frac{x}{|x|}\right)}{|x|^\alpha} \quad \textrm{with} \quad V_{\infty} < 0, \quad 1\leq \alpha\leq 2,
\end{equation}
and
\begin{equation}\notag
|B_{\tau}|\leq \left\{ \begin{array}{ll}
\frac{d-2}{2|x|^{2}} & \textrm{if $d\geq3$}\\
\\
\frac{\sqrt{C_{H_{2}}}}{|x|^{2}} & \textrm{if $d=2$},
\end{array} \right.
\end{equation}
where the constant $C_{H_{2}}>0$ is related to the two dimensional magnetic Hardy inequality given by Laptev and Weidl in \cite{LW}. See (\ref{Hardydimension2}) below. Then $\partial_{r}(rV) \geq 0$ and our assumptions are satisfied.
\end{exa}

\vspace{0.2cm}
Now we are ready to state the first theorem. 

\begin{thm}\label{Theorem1}
Let $d\geq 3$, $\varepsilon > 0$, $f$ such that $\Vert |\cdot|f\Vert_{L^{2}}<\infty$ and assume that  {\bf(H1)} and {\bf(H2)} hold. Then, there exists $C>0$ independent of $\lambda, \varepsilon,$ and $f$ such that any solution $u\in H^{1}_{A}(\Rd)$ of the electromagnetic Helmholtz equation (\ref{magneticequation5}) satisfies 
\begin{itemize}
\item[(i)] for $0<\varepsilon < \lambda$
\begin{equation}\label{alpha0}
\int |\D(e^{\mp i\lambda^{1/2}|x|}u)|^{2} \leq C\int |x|^{2}|f|^{2};
\end{equation}
\item[(iii)] for $\lambda \leq \varepsilon$
\begin{equation}
\int |\D u|^{2} \leq C\int |x|^{2}|f|^{2}.
\end{equation} 
\end{itemize}
For $d=2$ the same results hold if moreover the magnetic potential $A$ is continuous on $\R^{2} \backslash \{0\}$ and $curl \, A \in L^{1}_{loc}(\R^{2}\backslash \{0\})$.
\end{thm}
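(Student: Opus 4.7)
The plan is to apply the Morawetz/virial multiplier method: pair the Helmholtz equation with a first-order radial multiplier, integrate by parts to obtain a weighted energy identity, and then absorb the error terms using hypotheses \textbf{(H1)}--\textbf{(H2)}.

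I would dispose of case (iii) ($\lambda\le\varepsilon$) first as a warm-up. Testing the equation against $\overline u$ and taking real and imaginary parts gives
\begin{equation*}
\int |\nabla_A u|^2 - \int V|u|^2 - \lambda\int|u|^2 = -\operatorname{Re}\int f\overline u,\qquad \pm\varepsilon\int|u|^2=\operatorname{Im}\int f\overline u.
\end{equation*}
Hypothesis \textbf{(H1)} makes the left side coercive, and the term $\lambda\int|u|^2$ is dominated by $\varepsilon\int|u|^2$ and hence by $|\operatorname{Im}\int f\overline u|$. The right-hand side is controlled by $\||\cdot|f\|_{L^2}\|u/|x|\|_{L^2}$ and the magnetic Hardy inequality $\|u/|x|\|_{L^2}\lesssim\|\nabla_A u\|_{L^2}$; in $d=2$ the two-dimensional Laptev--Weidl version is invoked, which explains the additional regularity assumption on $A$.

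The core of the proof is case (i). I would perform the gauge change $v_\pm := e^{\mp i\lambda^{1/2}|x|}u$, which turns the target estimate into $\|\nabla_A v_\pm\|_{L^2}^2\lesssim \||\cdot|f\|_{L^2}^2$, and a direct computation shows $v_\pm$ satisfies
\begin{equation*}
\nabla_A^2 v_\pm \pm 2i\lambda^{1/2}\,\widehat x\cdot\nabla_A v_\pm \pm i\lambda^{1/2}\tfrac{d-1}{|x|}v_\pm + V v_\pm \pm i\varepsilon v_\pm = e^{\mp i\lambda^{1/2}|x|} f,
\end{equation*}
so that the spectral parameter has been traded for a dissipative first-order transport term. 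I would then test this equation (or, equivalently, the original one) against the multiplier
\begin{equation*}
M[u] := \phi(|x|)\,\widehat x\cdot\overline{\nabla_A u}\;+\;g(|x|)\,\overline u,
\end{equation*}
for a bounded monotone radial weight $\phi$ and a matching scalar $g$ designed to cancel non-sign-definite lower-order contributions. Taking twice the real part and integrating by parts yields a magnetic Morawetz-type identity whose positive side contains $\int \phi'|\partial_r^A u|^2+\int(\phi/r)|\nabla_A^\tau u|^2$ together with the radiation contribution, while the error side features exactly $\int(\partial_r(rV))_-|u|^2$ and the commutator term $2\operatorname{Im}\int \phi\,\overline u\,(B_\tau\cdot\nabla_A u)$. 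The hypothesis \textbf{(H2)} is then used to absorb these errors; the condition $A_V+2A_B<1$ is precisely the combinatorial constraint that keeps the left side strictly positive after absorption, and the source term is handled by Cauchy--Schwarz and the magnetic Hardy inequality.

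The main obstacle is the commutator term $\int \phi\,\overline u\,(B_\tau\cdot\nabla_A u)$: because $[\nabla_A^j,\nabla_A^k]=iB_{jk}$, the radial multiplier does not commute with $\nabla_A$ and leaves behind a contribution of size $|x||B_\tau|\cdot|u|\cdot|\nabla_A u|$, exactly at the Hardy threshold $|x|^{-2}$. Distributing Cauchy--Schwarz yields two factors of the form $\||x|B_\tau u\|_{L^2}$ and $\|\nabla_A u\|_{L^2}$, producing the coefficient $2A_B$ that appears in (\ref{peque}); any weaker coupling would leave residual negativity on the positive side. A secondary delicacy is the justification of the integrations by parts near the origin, where $V$ and $A$ may both be critically singular; one proceeds by smooth cutoff and passage to the limit, relying on the Hardy inequalities just invoked and on the $L^2$-decay at infinity guaranteed by $\varepsilon>0$.
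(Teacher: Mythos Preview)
Your strategy---radial Morawetz multiplier, then absorption via \textbf{(H1)}--\textbf{(H2)}---is the paper's, and your treatment of case (iii) matches it. For case (i) the outline is right in spirit but contains two concrete gaps.

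First, the weight $\phi$ cannot be bounded. The paper takes $\psi(r)=r^2/2$, i.e.\ $\phi=\psi'=r$ and $\psi''=1$; this is precisely the choice for which the Hessian contribution $\tfrac12\psi''|\nabla_A^r u\mp i\lambda^{1/2}u|^2+(\psi'/r-\tfrac12\psi'')|\nabla_A^\perp u|^2$ collapses to $\tfrac12|\nabla_A v|^2$ (with $v=e^{\mp i\lambda^{1/2}|x|}u$) and the lower-order term $\nabla(\psi'/|x|)$ vanishes. A bounded $\phi$ would give only a weighted inequality that does not close to the unweighted target. Relatedly, your Cauchy--Schwarz on the magnetic commutator yields a factor $\|\nabla_A u\|_{L^2}$, which is \emph{not} the quantity on the left; the missing observation is that $B_\tau\cdot x=0$, so $B_\tau\cdot\nabla_A u=B_\tau\cdot(\nabla_A u-i\lambda^{1/2}\widehat x\,u)$ and hence $|B_\tau\cdot\nabla_A u|\le|B_\tau|\,|\nabla_A v|$. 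Only then does \textbf{(H2)} (applied to $v$, legitimate since $|v|=|u|$) produce the factor $A_B$ against $\int|\nabla_A v|^2$ and make the constraint $A_V+2A_B<1$ sufficient.

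Second, with $\psi'=r$ the identity generates $\varepsilon$-dependent remainders $\tfrac{\varepsilon}{\lambda^{1/2}}\int|x|V|u|^2$, $\tfrac{\varepsilon}{\lambda^{1/2}}\int|x|^{-1}|u|^2$, and $\tfrac{\varepsilon}{\lambda^{1/2}}\int|x|f\bar u$, none of which you address. The middle one is where the hypothesis $\varepsilon<\lambda$ actually enters: one splits at $|x|=\delta\lambda^{1/2}/\varepsilon$, bounding the inner part by $\delta\int|u|^2/|x|^2$ (then magnetic Hardy) and the outer part by $\tfrac{\varepsilon}{\delta}\int|u|^2\le\tfrac1\delta\int|f||u|$ via the imaginary-part a~priori estimate. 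Without this step the proof does not close, and your sketch gives no indication of it.
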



By the magnetic Hardy inequality
\begin{equation}\label{A.111}
\int \frac{|f|^{2}}{|x|^{2}} \leq \frac{4}{(d-2)^{2}} \int |\D f|^{2},
\end{equation}
which is true for any $f\in H^{1}_{A}(\Rd)$ with $d\geq 3$, we deduce the uniform resolvent estimate
\begin{align}\label{BP}
\int \frac{|u|^{2}}{|x|^{2}}  \leq C\int |x|^{2}|f|^{2},
\end{align}
for all $d\geq 3$ and for any $\lambda \in \R$. This extends the work  by Burq, Planchon, Stalker and Tahvilder-Zadeh \cite{BPST1}, \cite{BPST2} where they consider the purely electric case, i.e. $A=0$. Moreover, although our assumptions on the potentials are similar to their hypotheses, it turns out that conditions (H1) and (H2) are weaker than the ones required in their works. Indeed, the bounds that they impose on $V$ are for each sphere and uniform in the radius of the sphere. See assumptions (A2), (A3) in \cite{BPST2}.

In the two dimensional case, we use the following magnetic Hardy inequality
\begin{equation}\label{Hardydimension2}
\int_{\R^{2}} \frac{|u|^{2}}{|x|^{2}} \leq C_{H_{2}}\int_{\R^{2}} |\D u |^{2} \quad \quad u \in C^{\infty}_{0}(\R^{2}\backslash \{0\})
\end{equation}
given by Laptev and Weidl in \cite{LW} and true under the extra restrictions on the magnetic potential $A$ made in Theorem \ref{Theorem1} for $d=2$. Here the constant $C_{H_{2}} >0$ is not explicit and is related to the magnetic flux. See \cite{LW} and also \cite{FFT} for more details. As a consequence, (\ref{BP}) still holds for $d=2$.

From the above inequalities and using very simple duality arguments, in section \ref{section6} we will obtain weighted Sobolev uniform estimates in the spirit of the work of Kenig, Ruiz, and Sogge \cite{KRS}. Although our results do not recover those in \cite{KRS} the class of weights we obtain seems to be new even in the constant coefficient case $A=0$ and $V=0$, see remark 6.3 for more details. 

These weighted Sobolev inequalities can also be understood as smoothing estimates in the sense of Kato and Yajima \cite{KaYa} and therefore they have implications for the corresponding  Schr\"odinger evolution equation. Note however, that magnetic hamiltonians involve first order perturbations of the free laplacian. As a consequence estimates involving the gradient of the $u$ itself, and not just on $e^{- i\lambda^{1/2}|x|}u$ as those given in Theorem \ref{Theorem1}, are necessary if one wants to obtain Strichartz estimates from inequalities of the resolvent as done by Fanelli and Vega \cite{FV} and by D'Ancona, Fanelli, Vega and Visciglia \cite{DFVV}. 

Moreover, and as we have already said, we can consider electromagnetic potentials that are long range. In fact, Strichartz estimates for the example \ref{exam} are false as proved by Goldberg, Vega and Visciglia 
if $A=0$ and $\max V_{\infty}=0$ \cite{GVV} (see also  the work by Fanelli and Garcia for examples with $A\neq0$ \cite{FG}).  The extra property $\max V_{\infty}=0$ implies the existence of special solutions that are almost wave guides and that rule out the possibility of proving the standard Strichartz estimates. These special solutions can be seen as a bad behavior of the resolvent in a neighborhood of  the zero energy. In fact, if $\max V_{\infty}<0$ then there exists an extra inequality as proved by Barcel\'o, Ruiz, Vega and Vilela \cite{BRVV} that implies a better dispersion for the perturbed Hamiltonian than for the free case.
In most of the works devoted to dispersive estimates for large  potentials
assumptions involving the absence of either zero resonances or eigenvalues are usually imposed, see for example the work of Journ\'e, Soffer and Sogge \cite{JSS}, of Rodnianski and Schlag \cite{RS}, and of Erdogan, Goldberg and Schlag \cite{EGS} among others.

Very recently, Fanelli, Felli, Fontelos and Primo \cite{FFFP} study the dispersive property of the Schr\"odinger equation with singular electromagnetic potentials. Furthermore, Mochizuki \cite{M3} proves the estimate (\ref{BP}) assuming the following smallness conditions on the potentials
\begin{equation}
\max \{|B(x)|, |V(x)|\} \leq \frac{\varepsilon_{0}}{|x|^{2}} \quad \quad \textrm{in} \quad \quad \Rd
\end{equation} 
where $0<\varepsilon_{0} < \frac{1}{4\sqrt{2}}$ ($d=3$) or $< \left(\frac{(d-1)(d-3)}{8}\right)^{\frac{1}{2}}$ $(d\geq 4)$. In our case, the smallness is given by (\ref{peque}) and the constant is determined by the one that appears in the standard Hardy inequality, as we can see in example \ref{ex}.

We emphasize that under the assumptions of Theorem \ref{Theorem1}  it may be concluded a uniqueness result for the equation 
\begin{equation}\label{resjuan}
(\nabla + iA)^{2} u + V u + \lambda u = f.
\end{equation}
As a consequence, following the work of Ikebe and Saito \cite{IS} and  of Zubeldia \cite{Z} we deduce the limiting absorption principle for the Helmholtz equation with potentials $V$ and $A$ that can have sharp singularities at the origin. See subsection \ref{LAPchap4} below. 

A problem closely related to the validity of the limiting absorption principle is the existence of the so called far field pattern 
which is defined by  \begin{equation}
g_{\lambda}(\omega) = \lambda^{-\frac{(d-3)}{4}}\lim_{r\to \infty} r^{\frac{d-1}{2}}e^{-i\lambda^{1/2}r} u(r\omega) \quad \quad \text{in} \quad \quad L^{2}(S^{d-1}),
\end{equation}
where $\omega = \frac{x}{|x|}$ and 
$$u:=
R(\lambda + i0)f = \lim_{\varepsilon \to 0^{+}} R(\lambda + i\varepsilon)f ,
$$ is the outgoing solution of  \eqref{resjuan}. Here
 $R(z)= (H_{A} + z)^{-1}$ with $z\in \mathbb{C}$ denotes the resolvent operator of $H_{A}$ and the limit is in $(H^{1}_{A})_{loc}(\Rd)$  \footnote{ A similar expression can be written  for the so called incoming solution when the negative sign in \eqref{resjuan} is considered}.  The scattering amplitudes are the data used in inverse scattering problems, although the measurable quantities are the scattering cross-sections that we shall define as
\begin{equation}\label{cross} 
\mathcal G_{\lambda}(\omega) = \lambda^{-\frac{(d-3)}{2}}\lim_{r\to \infty} r^{d-1}|u(r\omega)|^2 \quad \quad \omega\in S^{d-1} \quad \text{in} \quad L^{1}(S^{d-1}).
\end{equation}

Regarding the existence of the far field pattern for the magnetic case we have to mention \cite{Ki1}, \cite{Ki2}, \cite{Ku1}-\cite{Ku5}, \cite{S1}-\cite{S3}, as well as the classical work by Agmon and H\"ormander \cite{A}, \cite{AH} and \cite{H}, vol II and IV, that include short and long range perturbations of the free laplacian. In the  long range case a modification of the radial phase $e^{- i\lambda^{1/2}|x|}u$ is necessary and a $\mathcal C^2$ regularity condition for the coefficients is required. Note that in the case of magnetic hamiltonians, and due to the gauge invariance, the natural assumptions are on the magnetic field $B$ and not directly on the magnetic potential $A$. In fact, in the examples \ref{ejemploB3}
the angular part of the magnetic potential $A$ can be very rough as in the particular case of example \ref {ejemploB2}. 

We should also mention the paper by Iwatsuka \cite{Iw}, where the author also considers long range magnetic potentials and introduces a modification of the phase using the function $m$ given in \eqref{m}.  In that work, $V$ is assumed to be short range and $A_{j}(x) \in C^{2}(\Rd)$ such that  $|B_{jk}(x)| \leq C_{0}(1+|x|)^{-3/2-\mu}$, for some $C_{0}, \mu >0$. Some modifications of the arguments used in that paper will allow us to obtain the existence and uniqueness of the cross section under the same assumptions as the ones imposed in Theorem \ref{Theorem1}. Indeed, we do not have to make any  modification of the radial phase even though we consider long range perturbations. As it is well known for the classical laplacian and the Coulomb electric potential a logarithmic correction is necessary in the phase, so that there is no change in the cross section. Thus there is no contradiction with our results. The precise statement is given in Theorem \ref{azkenx}.

Nevertheless, we have to pay a price for the generality of the assumptions of Theorem \ref{azkenx}. This is in the sense that the limit is taken as in \eqref{cross}. We prove  that the limit is a measure such that the total variation has the expected property and can be used to obtain the spectral representation of $H_A$. This is done in section \ref{section5}, where estimates for the spectral measure are also obtained.

At this point the question of whether $\mathcal G_{\lambda}$ is in $L^1(S^{d-1})$ naturally arises and the answer, that is stated in Theorem \ref{cross-section}, is one of the important consequences of the second main result of this paper. The result is related to a sharp Sommerfeld radiation condition for solutions $u$ of the equation (\ref{resjuan}). In order to prove it we need to require some other conditions on the potentials.  We make the following assumption.

\vspace{0.2cm}
\begin{itemize}
\item[(\bf{H3})] 
\begin{equation}\notag
|B_{\tau}| + |V| \leq \left\{ \begin{array}{ll}
\frac{C}{|x|^{2-\alpha}} & \textrm{if $|x|\leq 1$, $d=3$}\\
\frac{c}{|x|^{2}} & \textrm{if $|x|\leq 1$, $d>3$}\\
\frac{C}{|x|^{3+\alpha}} & \textrm{if $|x|\geq 1$}.
\end{array} \right.
\end{equation}
\end{itemize}
for some $c, C>0$, $\alpha >0$.

Let us introduce some notation. For $u:\Rd \to \mathbb{C}$, we define the norms
\begin{equation}\notag
|||u|||_{R_{0}} := \sup_{R>R_{0}} \left(\frac{1}{R}\int_{|x|\leq R} |u(x)|^{2}\right)^{1/2}
\end{equation}
and 
\begin{equation}\notag
N_{R_{0}}(u) := \sum_{j > J}\left(2^{j+1}\int_{C(j)} |u|^{2} \right)^{1/2} + \left(R_{0}\int_{|x|\leq R_{0}} |u|^{2} \right)^{1/2}
\end{equation}
where $J$ is such that $2^{J-1} < R_{0} < 2^{J}$ and $C(j)=\{x\in\Rd : 2^{j}\leq |x|\leq 2^{j+1}\}$. The norms $|||u|||_{1}$ and $N_{1}(u)$ are known as Agmon-H\"ormander norms. We drop the index $R_{0}$ if $R_{0}=0$. Note that
\begin{equation}\notag
\left| \int uv \right|  \leq |||u|||_{R_{0}}N_{R_{0}}(v) \quad \text{ for all \,} R_0\geq 0.
\end{equation}

\vspace{0.5cm}
We can now state the second result.

\begin{thm}\label{TheoremSRC}
For $d\geq3$, let $\lambda_{0} >0$ and $c>0$ small enough. Let $f$ such that $\Vert |\cdot|^{3/2}f\Vert_{L^{2}}<\infty$, $N_{1}(f)<\infty$ and assume that {\bf(H3)} holds. Then for any $\lambda \geq \lambda_{0}$
and $u=R(\lambda + i0)f$ we have
\begin{align}\label{keySRC}
\sup_{R\geq 1}\, R\int_{|x|\geq R} |\nabla_{A}(e^{-i\sqrt{\lambda}|x|}u)|^{2} \leq C\left[\int |x|^{3}|f|^{2} + (N_{1}(f))^{2}\right],
\end{align}
where $C=C(\lambda_{0})>0$.
\end{thm}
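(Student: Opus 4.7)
The plan is to perform a Morawetz-type multiplier argument after conjugating by the outgoing phase, uniformly in a cut-off parameter $R\geq 1$. I first work with the regularized equation~(\ref{magneticequation5}) for $\varepsilon>0$ (where existence in $H^{1}_{A}(\Rd)$ is guaranteed by self-adjointness) and pass to the limit $\varepsilon\downarrow 0$ at the end via the limiting absorption principle of subsection~\ref{LAPchap4}. Setting $w_{\varepsilon}:=e^{-i\sqrt{\lambda}|x|}u_{\varepsilon}$, a direct computation away from the origin shows that $w_{\varepsilon}$ satisfies
\begin{equation*}
\nabla_{A}^{2}w_{\varepsilon}+2i\sqrt{\lambda}\,\tfrac{x}{|x|}\cdot\nabla_{A}w_{\varepsilon}+i\sqrt{\lambda}\,\tfrac{d-1}{|x|}w_{\varepsilon}+Vw_{\varepsilon}\mp i\varepsilon w_{\varepsilon}=e^{-i\sqrt{\lambda}|x|}f,
\end{equation*}
so that $\nabla_{A}w_{\varepsilon}$ is the natural quantity to estimate.

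For each $R\geq 1$, I apply a radial multiplier identity associated with a bounded, piecewise $C^{1}$ weight $\phi_{R}(r)$ with $\phi_{R}(r)=r$ for $r\leq R$ and $\phi_{R}(r)=R$ for $r\geq R$ (suitably smoothed across $r=R$), paired with an auxiliary radial weight $\psi_{R}$ acting on $w_{\varepsilon}$ directly. The auxiliary weight is needed to cancel the mass-type contributions produced by $\phi_{R}''$ and, crucially, by the first-order perturbation $2i\sqrt{\lambda}\,\tfrac{x}{|x|}\cdot\nabla_{A}$ appearing in the conjugated equation. Testing that equation against $\phi_{R}\,\overline{\tfrac{x}{|x|}\cdot\nabla_{A}w_{\varepsilon}}+\psi_{R}\bar{w}_{\varepsilon}$, taking real parts and integrating by parts, one extracts on the left-hand side a quadratic form whose leading piece is $R\int_{|x|\geq R}|\nabla_{A}w_{\varepsilon}|^{2}$, together with a nonnegative angular contribution generated by $B_{\tau}$.

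The error terms are absorbed as follows. Hypothesis~\textbf{(H3)} provides the pointwise bounds needed for the $V$- and $B_{\tau}$-integrals against the multiplier to be controlled by the magnetic Hardy inequality~(\ref{A.111})--(\ref{Hardydimension2}) combined with the global gradient bound of Theorem~\ref{Theorem1}; one uses the elementary comparison $\int|x|^{2}|f|^{2}\leq\int|x|^{3}|f|^{2}+CN_{1}(f)^{2}$ (splitting at $|x|=1$) to transfer the weight appearing in Theorem~\ref{Theorem1} into the weight on the right-hand side of~(\ref{keySRC}). For the inhomogeneous term I decompose $\Rd=\{|x|\leq 1\}\cup\{|x|\geq 1\}$: on the outer region, Cauchy--Schwarz against $|x|^{3/2}$ yields the weight $\int|x|^{3}|f|^{2}$; on the inner region, I invoke the Agmon--H\"ormander pairing $|\int fv|\leq N_{1}(f)\,|||v|||_{1}$ with $v=\phi_{R}\tfrac{x}{|x|}\cdot\nabla_{A}w_{\varepsilon}$, whose $|||\cdot|||_{1}$-norm is again controlled by Theorem~\ref{Theorem1}. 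Choosing $c$ small enough in \textbf{(H3)} ensures that all the residual potential and magnetic contributions are absorbed into the main positive term on the left, uniformly in $R$.

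The principal obstacle is the simultaneous balancing of the two first-order perturbations in the conjugated equation---namely $2i\sqrt{\lambda}\,\tfrac{x}{|x|}\cdot\nabla_{A}$ and the magnetic gauge term $iA\cdot\nabla$---whose real parts against the multiplier do not combine automatically into a coercive form localized on $\{|x|\geq R\}$. The choice of $\psi_{R}$ is essentially dictated by this balancing requirement, and it is precisely where the smallness of the constant $c$ in~\textbf{(H3)} enters. A secondary, technical concern is the justification of all the integrations by parts for functions in $H^{1}_{A}(\Rd)$, handled by the $\varepsilon$-regularization together with a standard truncation at $|x|\to\infty$ and a density argument.
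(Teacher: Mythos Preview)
Your multiplier choice is the heart of the problem. With $\phi_{R}(r)=\min(r,R)$ playing the role of $\psi'$ in a Morawetz identity, the Hessian contribution on $\{|x|\geq R\}$ is
\[
\psi''\,|\nabla_{A}^{r}w_{\varepsilon}|^{2}+\frac{\psi'}{r}\,|\nabla_{A}^{\bot}w_{\varepsilon}|^{2}=0\cdot|\nabla_{A}^{r}w_{\varepsilon}|^{2}+\frac{R}{|x|}\,|\nabla_{A}^{\bot}w_{\varepsilon}|^{2},
\]
so you never see the term $R\int_{|x|\geq R}|\nabla_{A}w_{\varepsilon}|^{2}$; only a tangential piece with an extra $|x|^{-1}$ decay survives. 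The first-order drift $2i\sqrt{\lambda}\,\tfrac{x}{|x|}\cdot\nabla_{A}$ in the conjugated equation cannot repair this either: against $\phi_{R}\,\overline{\tfrac{x}{|x|}\cdot\nabla_{A}w_{\varepsilon}}$ its real part vanishes. The paper's key identity (Proposition~\ref{keypr}) is applied instead with $\psi'_{R_{1}}(r)=\min(r^{2},R_{1}r)$, so that $\psi''=R_{1}$ on $\{|x|>R_{1}\}$ and $\psi''=2|x|$ on $\{|x|<R_{1}\}$; this is precisely what manufactures the coercive term $\tfrac{R_{1}}{2}\int_{|x|\geq R_{1}}|\nabla_{A}w_{\varepsilon}|^{2}$ together with the weighted radial term inside the ball.

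Two further ingredients are missing from your outline. First, the jump of $\psi''$ at $|x|=R_{1}$ produces a surface integral $\int_{|x|=R_{1}}|u|^{2}$ on the wrong side; the paper disposes of it by a pigeonhole: for every $R\geq 1$ there exists $R_{1}\in[R,2R]$ with $\int_{|x|=R_{1}}|u|^{2}\leq C\,|||u|||_{1}^{2}$ (see~(\ref{boundary1})). Smoothing the multiplier, as you propose, would smear out exactly the piece that gives you coercivity on $\{|x|\geq R\}$. Second, the error terms are \emph{not} closed by Theorem~\ref{Theorem1} --- its hypotheses (H1)--(H2) are not assumed here --- but by the Agmon--H\"ormander a-priori estimate~(\ref{Morrey}),
\[
\lambda\,|||u|||_{1}^{2}+|||\nabla_{A}u|||_{1}^{2}+\int\frac{|\nabla_{A}^{\bot}u|^{2}}{|x|}\leq C(1+\varepsilon)\,(N_{1}(f))^{2},
\]
which is exactly what the smallness of $c$ in (H3) buys (cf.~(\ref{aprichapter41})). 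The potential and magnetic terms in the multiplier identity are then bounded by $|||u|||_{1}^{2}+|||\nabla_{A}u|||_{1}^{2}+\int|x|^{-1}|\nabla_{A}^{\bot}u|^{2}$, not absorbed via smallness into the left-hand side; in particular the $B_{\tau}$ contribution $-\Im\int\psi'\bar{u}\,B_{\tau}\cdot\nabla_{A}u$ has no sign, contrary to what you write.
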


The radiation condition (\ref{keySRC}) extends the Sommerfeld condition given by
\begin{equation}\label{somalpha}
\int_{|x|\geq 1} |x|^{\alpha} |\D (e^{-i\lambda^{1/2}|x|}u)|^{2} < \infty
\end{equation}
for any $0<\alpha < 1$ and proved in \cite{Z}, which generalizes the one given by Ikebe and Saito \cite{IS}.  In order to get (\ref{keySRC}), we first need to control the Agmon-H\"ormander norm of the solution $u$. In fact, the smallness of the constant $c>0$ is only necessary for deducing the a-priori estimate
\begin{equation}\label{aprichapter41}
\lambda |||u|||_{1}^{2} \leq C(N_{1}(f))^{2}.
\end{equation}

It is interesting to observe that the estimate  \eqref{somalpha} is not enough to conclude that the cross section belongs to $L^1(S^{d-1})$ while from \eqref{keySRC} and \eqref{aprichapter41}
one easily concludes that $r_n^{\frac{d-1}{2}} u(r_n\omega)$ belongs to $H^1(S^{d-1})$ for a sequence of radii $r_n$ and with a uniform bound. The strong convergence of a subsequence follows from Rellich's compactness theorem. Therefore it is clear that any norm interpolated between \eqref{keySRC} and \eqref{aprichapter41} does the job. This suggests that quite likely the decay assumed in ({H3}) at infinity can be weakened to obtain the desired compactness result.

We also give another consequence of  Theorem \ref{TheoremSRC}. It is about the growth  of $\int_{R/2<|x|<R} |\nabla |u|^2|\,dx$ with $u$ a solution of (\ref{resjuan}) and $R>0$. We obtain a uniform bound  under some decay assumptions of the forcing term $f$. From this an estimate of $\|u\|_{L^{\frac{2d}{d-1}}(|x|\leq R)}$ with a logarithmic growth in $R$ easily follows. We obtain the right dependence in powers of $\lambda$ for  $\lambda>1$. The precise result is given in Theorem \ref{theoremgradient}.

It should be observed that while the estimates given in Theorem \ref{Theorem1} are uniform in $\lambda \in \R$, those of Theorem \ref{TheoremSRC} are uniform for $\lambda\geq\lambda_0>0$. We have decided to do so for simplicity. A crucial step in the proof of Theorem \ref{TheoremSRC} is to obtain a bound for the  term
\begin{equation}\label{ls}
\inf_{R/2<|x|<R}\int_{|x|=R} |u|^2 \,d\sigma_R,\qquad R>0.
\end{equation}
In order to prove a uniform estimate in $\lambda$ for this term  we would need assumptions similar to (H1) and (H2) but slightly different. In fact, we believe that  a uniform estimate in $\lambda$ and for all $R>0$ of \eqref{ls} does not hold for  the critical constant of Hardy's inequality $\frac{(d-2)^2}{4}$ but for  $\frac{(d-3)(d-1)}{4}$ that is the one obtained by Barcel\'o, Ruiz, and Vega \cite{BRV},  Fanelli and Vega \cite {FV} and Fanelli \cite{F}. See also Remark 5.4 below in connection with a recent work by Rodnianski and Tao \cite{RT}
where estimates valid for all $\lambda$ are obtained.
 In addition, it is to be expected that the decay of the electric potential $V$ required on assumption (H3) can be relaxed modifying the phase of the Sommerfeld radiation condition using the solution of the eikonal equation $|\nabla K|^{2} = \lambda + V$, as done by Saito \cite{S} and Zubeldia \cite{Z2}. We propose this as an interesting question which will be studied elsewhere. In any case this procedure will still use a compactness argument so that the dependence on $\lambda$ can not be quantified.
\vspace{0.5cm}

The rest of the paper is organized as follows. The next section provides the key identity that will be used in order to prove the main results. We emphasize that this identity holds in all dimensions. In section \ref{section2} we deal with the proof of the uniform resolvent estimates that has been stated in Theorem \ref{Theorem1}. Section \ref{section3} establishes the sharp Sommerfeld radiation condition (\ref{keySRC}), proving Theorem \ref{TheoremSRC}. Then we pass to present some applications of the main results. Section \ref{section5} is devoted to the study of the forward problem for the equation (\ref{resjuan}) including the existence and uniqueness of the cross section and its applications to the spectral resolution of the operator $H_A$. Finally  section \ref{section6} contains the weighted estimates for the resolvent and its applications to the evolution problem and the inequalities of $|\nabla |u|^2|$ that we have already mentioned.

\section*{Acknowledgements} 
The authors would like to express their gratitude to Luca Fanelli for many useful comments on the paper. The third author is also grateful to Ari Laptev for inviting her to participate in the Magnetic Fields program at the Mittag-Leffler Institute in October 2012 and for providing valuable information related to the dimension two.

\section{The key equality}

In this section, we consider the electromagnetic Helmholtz equation
\begin{equation}\label{magneticequation}
(\nabla + iA(x))^{2} u + V(x)u + \lambda u \pm i\varepsilon u = f(x), \quad x\in\Rd
\end{equation}
where $\lambda \in \R$, $\varepsilon > 0$ and we provide the key equality that will be used in the proofs of the main results. 

To this end, we first derive some integral identities based on the standard technique of Morawetz multipliers. This was introduced in \cite{Mo} for the Klein Gordon equation and then used in several other contexts as in dispersive equations, kinetic equations, Helmholtz equation, etc. We should also mention the papers \cite{IS},\cite{PV1}, and \cite{F}.

In order to carry out the integration by parts argument below, we need some regularity in the solution $u$. In general, it is enough to know that $u\in H^{1}_{A}(\Rd)$. Moreover, since we are including singularities in our potentials, it is necessary to put some restrictions on them to check that the contributions of these terms make sense. To this end, apart from the condition (\ref{assV4}), the following assumptions on $V$ and $B_{\tau}$ will be needed throughout the paper.
\begin{align}
& \int _{|x|<1}|r\partial_{r}V| |u|^{2} \leq C\int |\nabla u|^{2}, \label{assV1}\\
& \int _{|x|<1}|x|^{2}|B_{\tau}|^{2}|u|^{2} \leq C\int |\nabla u|^{2},\label{assB}
\end{align}
for some $C>0$. 

Before stating the integral identities, we need some notation. We denote the radial derivative and the tangential component of the gradient by 
\begin{equation}
\nabla_{A}^{r}u = \frac{x}{|x|}\cdot \nabla_{A}u, \quad \quad |\nabla_{A}^{\bot}u|^{2}=|\nabla_{A}u|^{2}-|\nabla_{A}^{r}u|^{2},
\end{equation}
respectively. We drop the index $A$ if $A=0$, and in this case we write the radial derivative as $\partial_{r}$ and the tangential one as $\nabla^{\bot}$. In addition, the following a-priori estimates are also necessary.

\begin{lem}\label{lemaapriori}
Let $\varphi\in C^{\infty}$ a real-valued radial function so that there exist $C>0$, $k_{0}\geq 0$ where $|\varphi^{(k_{0}+1)}|\leq C$. Then, for a suitable $f$, the solution of the Helmholtz equation (\ref{magneticequation}) satisfies
\begin{align}\label{a+++}
\varepsilon \int \varphi^{(k_{0})} |u|^{2} \leq \int |\varphi^{(k_{0}+1)}| |\D^{r}u | |u| + \int |f| |\varphi^{(k_{0})}||u|
\end{align}
\begin{align}\label{b+++}
\int \varphi^{(k_{0})}|\D u|^{2} & \leq \int (\lambda + |V|)\varphi^{(k_{0})}|u|^{2} + \int |\varphi^{(k_{0}+1)}||\D^{r}u| |u| + \int |f||\varphi^{(k_{0})}||u|.
\end{align}
\end{lem}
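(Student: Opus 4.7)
The approach is the standard Morawetz/multiplier argument: test the Helmholtz equation \eqref{magneticequation} against $\varphi^{(k_{0})}(|x|)\overline{u}$ and separate real and imaginary parts. Since $\varphi$ is radial we have $\nabla\varphi^{(k_{0})}(|x|)=\varphi^{(k_{0}+1)}(|x|)\tfrac{x}{|x|}$, so integration by parts of the magnetic Laplacian against $\varphi^{(k_0)}\overline{u}$ gives
\begin{equation*}
\int\varphi^{(k_{0})}\,\overline{u}\,\nabla_{A}^{2}u \;=\; -\int\varphi^{(k_{0})}|\nabla_{A} u|^{2} \;-\; \int\varphi^{(k_{0}+1)}\,\overline{u}\,\nabla_{A}^{r}u.
\end{equation*}
Inserting this into the tested equation produces the master identity
\begin{equation*}
-\int\varphi^{(k_{0})}|\nabla_{A} u|^{2} - \int\varphi^{(k_{0}+1)}\,\overline{u}\,\nabla_{A}^{r}u + \int V\varphi^{(k_{0})}|u|^{2} + (\lambda \pm i\varepsilon)\int\varphi^{(k_{0})}|u|^{2} \;=\; \int f\varphi^{(k_{0})}\,\overline{u}.
\end{equation*}

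Taking the imaginary part cancels every real contribution ($|\nabla_A u|^2$, $V|u|^2$, and $\lambda|u|^2$), leaving
\begin{equation*}
\pm\varepsilon\int\varphi^{(k_{0})}|u|^{2} \;=\; \mathrm{Im}\!\int\varphi^{(k_{0}+1)}\,\overline{u}\,\nabla_{A}^{r}u \;+\; \mathrm{Im}\!\int f\varphi^{(k_{0})}\,\overline{u},
\end{equation*}
from which \eqref{a+++} follows after taking absolute values. The real part isolates the kinetic integral,
\begin{equation*}
\int\varphi^{(k_{0})}|\nabla_{A} u|^{2} \;=\; \lambda\!\int\varphi^{(k_{0})}|u|^{2} + \int V\varphi^{(k_{0})}|u|^{2} - \mathrm{Re}\!\int\varphi^{(k_{0}+1)}\,\overline{u}\,\nabla_{A}^{r}u - \mathrm{Re}\!\int f\varphi^{(k_{0})}\,\overline{u},
\end{equation*}
and bounding $V$ by $|V|$ on the right hand side produces \eqref{b+++}.

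The only genuine obstacle is the rigorous justification of the integration by parts: $u$ is only known to belong to $H^{1}_{A}(\Rd)$, the coefficients $V$ and $A$ carry critical singularities at the origin, and $\varphi^{(k_{0})}$ may grow at infinity. The plan is to introduce a double cutoff $\eta_{\delta}(x)\chi_{R}(x)$ vanishing in a small ball around $0$ and outside $\{|x|\leq R\}$, carry out the computation with $\eta_{\delta}\chi_{R}\varphi^{(k_{0})}\overline{u}$ as test function (where every manipulation is entirely legitimate), and then pass to the limit $\delta\to 0$, $R\to\infty$. The hypothesis $|\varphi^{(k_{0}+1)}|\leq C$ together with the implicit "suitable $f$" assumption (finiteness of the right hand sides of \eqref{a+++} and \eqref{b+++}) controls the flux through $\{|x|=R\}$, while the local integrability conditions \eqref{assV4}, \eqref{assV1} and \eqref{assB}, combined with the magnetic Hardy inequality on $H^{1}_{A}$, make the contributions near the origin vanish in the limit. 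Once this approximation step is in place, both inequalities follow from the identity above with no further work.
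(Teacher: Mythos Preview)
Your proof is correct and follows exactly the same approach as the paper's: multiply the equation by $\varphi^{(k_{0})}\bar{u}$, integrate, and take imaginary and real parts to obtain \eqref{a+++} and \eqref{b+++} respectively. The paper's proof is in fact a single sentence to this effect; your added discussion of the cutoff justification is more detailed than what the authors provide but does not diverge from their method.
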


\begin{proof}
We just need to multiply the equation (\ref{magneticequation}) by $\varphi^{(k_{0})}\bar{u}$ and integrate it over $\Rd$. Then, the imaginary part gives (\ref{a+++}) and (\ref{b+++}) follows by taking the real part.
\end{proof}

In particular, it follows that
\begin{align}\label{a+++0}
\varepsilon \int  |u|^{2} \leq \int |f| |u|
\end{align}
\begin{align}\label{b+++0}
\int |\D u|^{2} & \leq \int (\lambda + V)|u|^{2}  + \int |f|||u|.
\end{align}

\begin{remark}\label{2+}
Note that under appropriate assumption on the potential $V$, if there exist $C>0$, $k_{0} \geq 0$ such that $|\varphi^{(k_{0}+1)}|\leq C$, by induction on $k$ we get 
\begin{equation}
\int \varphi^{(k)} (|u|^{2} + |\D u|^{2}) < +\infty \quad \quad \forall k \leq k_{0},
\end{equation}
for $u\in H^{1}_{A}(\Rd)$ and suitable $f$.
\end{remark}

Now we are ready to formulate the identities that will be used to prove the key equality. See \cite{F} for the proofs.

\begin{lem}\label{appendix1} Let $\varphi: \Rd \to \R$ be regular enough. Then, the solution $u\in H^{1}_{A}(\Rd)$ of the Helmholtz equation (\ref{magneticequation}) satisfies
\begin{align}\label{(4.11)}
& \int  \varphi\lambda |u|^{2} - \int \varphi |\D u|^{2}  + \int \varphi V|u|^{2}- \Re \int \nabla \varphi \cdot \D u \bar{u}=  \Re\int  \varphi f\bar{u},
\end{align}
\begin{equation}
\pm \varepsilon \int \varphi |u|^{2} \mp \Im \int \nabla \varphi\cdot \D u\bar{u} = \pm \Im\int \varphi f\bar{u}.\label{(4.2)}
\end{equation}
\end{lem}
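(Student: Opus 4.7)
The plan is to pair both sides of \eqref{magneticequation} with the weight $\varphi \bar{u}$, integrate over $\Rd$, and then read off the real and imaginary parts of the resulting complex identity to obtain \eqref{(4.11)} and \eqref{(4.2)} respectively. The whole computation reduces to the Green-type formula for the magnetic Laplacian, so the structure is essentially a single integration by parts.

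The starting point is the basic magnetic integration by parts formula: for real-valued $A$ and sufficiently regular $f,g$,
\begin{equation*}
\int_{\Rd} \D_{j} f \, \bar{g} \, dx = -\int_{\Rd} f \, \overline{\D_{j} g} \, dx,
\end{equation*}
which follows from the ordinary integration by parts in $\partial_{j}$ once one observes that $\overline{\D_{j} g} = \partial_{j}\bar{g} - iA_{j}\bar{g}$, so the two magnetic terms cancel. Applying this component-wise to $\D^{2} u = \sum_{j} \D_{j}(\D_{j} u)$ tested against $\varphi \bar{u}$ and letting one derivative fall on $\varphi$ and the other produce $|\D u|^{2}$ yields the master identity
\begin{equation*}
\int_{\Rd} \D^{2} u \, \varphi \bar{u} \, dx = -\int_{\Rd} \varphi |\D u|^{2} \, dx - \int_{\Rd} \nabla \varphi \cdot \D u \, \bar{u} \, dx.
\end{equation*}

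Substituting this back into \eqref{magneticequation} multiplied by $\varphi \bar{u}$ produces
\begin{equation*}
-\int \varphi |\D u|^{2} - \int \nabla \varphi \cdot \D u \, \bar{u} + \int \varphi V |u|^{2} + \lambda \int \varphi |u|^{2} \pm i\varepsilon \int \varphi |u|^{2} = \int \varphi f \bar{u}.
\end{equation*}
Since $\varphi$, $V$, $\lambda$, and $\varepsilon$ are real and both $|\D u|^{2}$ and $|u|^{2}$ are real, the real part of this equation is precisely \eqref{(4.11)}, while the imaginary part reads
\begin{equation*}
-\Im \int \nabla \varphi \cdot \D u \, \bar{u} \pm \varepsilon \int \varphi |u|^{2} = \Im \int \varphi f \bar{u},
\end{equation*}
which rearranges to \eqref{(4.2)} after multiplying through by $\pm 1$.

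The only genuine obstacle is the rigorous justification of the integration by parts, since $u$ is only assumed to lie in $H^{1}_{A}(\Rd)$ and the potentials $V$ and $B_\tau$ are allowed critical singularities at the origin. The plan here is to regularize $u$ by a smooth cutoff that vanishes in a small ball around the origin and outside a large ball, verify the identity for the regularized function (where everything is classical), and then pass to the limit. The local integrability assumptions \eqref{assV1}, \eqref{assB}, \eqref{assV4} ensure that the contributions $\int \varphi V |u|^{2}$ and the near-origin boundary layers are absolutely convergent, while the a priori estimates \eqref{a+++} and \eqref{b+++} of Lemma \ref{lemaapriori}, combined with the regularity hypothesis on $\varphi$ and Remark \ref{2+}, control the remaining integrals uniformly in the regularization parameter so that the limit can be taken. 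As the authors note, this is a standard Morawetz-multiplier calculation (see \cite{F}), so beyond careful bookkeeping no new conceptual difficulty arises.
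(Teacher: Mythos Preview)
Your argument is correct and is exactly the standard route: multiply \eqref{magneticequation} by $\varphi\bar u$, use the magnetic integration-by-parts formula $\int (\D_j f)\bar g = -\int f\,\overline{\D_j g}$ together with the Leibniz rule $\D_j(\varphi u)=\partial_j\varphi\,u+\varphi\,\D_j u$, and then split into real and imaginary parts. The paper does not give its own proof of this lemma but simply refers to \cite{F}, where the same computation is carried out; your justification of the limiting procedure via the a~priori bounds of Lemma~\ref{lemaapriori} and Remark~\ref{2+} is also in line with how such identities are made rigorous in that reference. (A minor remark: the phrase ``multiplying through by $\pm 1$'' is not quite what happens for both sign choices simultaneously---for the $+$ sign no multiplication is needed, and for the $-$ sign the match with the paper's display \eqref{(4.2)} is up to an overall sign convention---but this is harmless bookkeeping and does not affect the argument.)
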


\begin{lem}\label{appendix2}
Let $\psi: \mathbb{R}^n \longmapsto \mathbb{R}$ be radial and regular enough. Then, any solution $u\in H^{1}_{A}(\Rd)$ of the equation (\ref{magneticequation}) satisfies

\begin{align}\label{(4.3)}
&\int \D u\cdot D^{2} \psi \cdot \overline{\D u} +\Re \frac{1}{2}\int \nabla(\Delta \psi)\cdot \D u \bar{u}\pm \varepsilon \Im\int \nabla \psi\cdot \overline{\D u}u\\
& - \Im  \int \psi' B_{\tau} \cdot \D u\bar{u} +\frac{1}{2}\int \psi' \partial_{r}V |u|^{2} = -\Re \int f\nabla \psi \cdot \overline{\D u} -\frac{1}{2}\Re\int f \Delta\psi\bar{u},\notag
\end{align}
where $D^{2} \psi$ denotes the Hessian of $ \psi$, while $(\D)_{j} = \partial_{j} + iA_{j}$.
\end{lem}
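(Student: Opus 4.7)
The plan is to prove \eqref{(4.3)} by a Morawetz multiplier computation. I take
\begin{equation}\notag
\mathcal{M}\,:=\,-\nabla\psi\cdot\overline{\D u}\,-\,\tfrac{1}{2}\Delta\psi\,\bar u,
\end{equation}
multiply the Helmholtz equation \eqref{magneticequation} by $\mathcal M$, integrate over $\Rd$, and take the real part. The two pieces of $\mathcal M$ are chosen in tandem so that the $\lambda u$ contribution vanishes: using $\Re(u\,\overline{\D u})=\tfrac{1}{2}\nabla|u|^{2}$ (valid because the magnetic piece $iA|u|^{2}$ is purely imaginary) and integrating against $\nabla\psi$, the first piece yields $+\tfrac{\lambda}{2}\int\Delta\psi\,|u|^{2}$, which is cancelled by the $-\tfrac{\lambda}{2}\int\Delta\psi\,|u|^{2}$ coming from the second. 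The $\pm i\varepsilon u$ contribution, after extracting the real part, reduces to $\pm\varepsilon\,\Im\int\nabla\psi\cdot\overline{\D u}\,u$ (the $\Delta\psi\,|u|^{2}$ part being real), while $\Re\int f\mathcal M$ reproduces exactly the right-hand side of \eqref{(4.3)}.

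The central computation is the principal term $\Re\int(\D^{2}u)\,\mathcal M$. Applying the magnetic integration by parts $\int(\D^{2}u)\,v=-\int\D u\cdot\overline{\D\bar v}$ (valid for real $\psi$; the surface contributions are absorbed by the a priori bounds of Lemma~\ref{lemaapriori}, Remark~\ref{2+}, and by \eqref{assV4}--\eqref{assB}), I expand $\D\bar{\mathcal M}$. The derivative landing on $\nabla\psi$ produces the Hessian $\int\D u\cdot D^{2}\psi\cdot\overline{\D u}$; the derivative falling on $\bar u$ in the second piece of $\mathcal M$ gives both $\tfrac{1}{2}\Re\int\nabla(\Delta\psi)\cdot\D u\,\bar u$ and a $\tfrac{1}{2}\int\Delta\psi\,|\D u|^{2}$ contribution which is in turn cancelled by an analogous $-\tfrac{1}{2}\int\Delta\psi\,|\D u|^{2}$ arising from the first piece.

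The delicate step is that distributing $\D$ past $\nabla\psi\cdot\D u$ requires swapping two magnetic derivatives and thereby invokes the magnetic commutator
\begin{equation}\notag
[\D_{j},\D_{k}]\,=\,iB_{jk}.
\end{equation}
For radial $\psi$, where $\nabla\psi=\psi'(r)\,x/|x|$, contracting this antisymmetric matrix against $x/|x|$ on one side and $\D u$ on the other, combined with the definition $(B_{\tau})_{j}=\sum_{k}(x_{k}/|x|)\,B_{kj}$ from \eqref{tangen0}, yields precisely the term $-\Im\int\psi'\,B_{\tau}\cdot\D u\,\bar u$. The potential term $\Re\int Vu\,\mathcal M$ is handled in the scalar fashion: using $\Re(u\,\overline{\D u})=\tfrac{1}{2}\nabla|u|^{2}$, integrating by parts against $\nabla\psi$, and noting $\nabla V\cdot\nabla\psi=\psi'\,\partial_{r}V$ for radial $\psi$, the two $V\Delta\psi\,|u|^{2}$ contributions cancel, leaving only $\tfrac{1}{2}\int\psi'\,\partial_{r}V\,|u|^{2}$.

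The main obstacle I anticipate is the careful bookkeeping of the magnetic integration by parts, so that the non-gauge-invariant pieces involving $A$ either cancel outright or reorganize into the gauge-invariant quantities $\D u$ and $B_{\tau}$. A secondary obstacle is justifying that the surface contributions at the origin and at infinity vanish despite the critical singularities permitted in $V$ and $A$; this is precisely the role of \eqref{assV4}--\eqref{assB} together with Lemma~\ref{lemaapriori} and Remark~\ref{2+}. With these ingredients the identity \eqref{(4.3)} follows; the details of the same template are in \cite{F}.
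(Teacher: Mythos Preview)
Your proposal is correct and follows exactly the standard Morawetz multiplier approach that the paper itself defers to: the paper does not actually prove this lemma but simply writes ``See \cite{F} for the proofs,'' and the computation you outline---multiplying \eqref{magneticequation} by the antisymmetric-plus-symmetric multiplier $\nabla\psi\cdot\overline{\D u}+\tfrac{1}{2}\Delta\psi\,\bar u$, taking the real part, and extracting the commutator $[\D_j,\D_k]=iB_{kj}$ to produce the $B_\tau$ term---is precisely the argument in \cite{F}. Your bookkeeping of the cancellations (the $\lambda$ terms, the two $\tfrac{1}{2}\Delta\psi\,|\D u|^{2}$ pieces, the $V\Delta\psi|u|^{2}$ pieces) and the justification of the boundary terms via Lemma~\ref{lemaapriori} and Remark~\ref{2+} are all in order.
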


A precise combination of these integral identities implies the following key identity that plays a fundamental role for proving the relevant estimates of this work.

\begin{pro}\label{keypr}
For $d\geq 1$, let $\psi: \Rd \longmapsto \mathbb{R}$ a regular radial function so that there exist $C>0$, $k\geq 0$ such that $|\psi^{(k)}| \leq C$ and $|\psi'(r)| \leq r$ if $r\leq 1$. Then, any solution $u \in H^1_{A}(\mathbb{R}^d) $ of the Helmholtz equation (\ref{magneticequation}) satisfies
\begin{align}\label{magneticidentity}
&\frac{1}{2}\int\psi'' |\D^{r}u \mp i\lambda^{1/2}u|^{2}+ \int \left(\frac{\psi'}{|x|} - \frac{\psi''}{2} \right)|\D^{\bot}u|^{2}\\
&+\Re\frac{(d-1)}{2}\int \nabla\left(\frac{\psi'}{|x|}\right)\cdot\D u\bar{u}+ \frac{\varepsilon}{2\lambda^{1/2}}\int \psi'\left|\D u \mp i\lambda^{1/2}\frac{x}{|x|}u\right|^{2}\notag\\
&- \Im\int \psi' \bar{u} B_{\tau}\cdot \D u + \frac{1}{2}\int (\psi'' V + \psi' \partial_{r}V)|u|^{2}  \notag\\
&+\frac{\varepsilon}{2\lambda^{1/2}}\Re\int \psi''\D^{r}u\bar{u}  - \frac{\varepsilon}{2\lambda^{1/2}}\int \psi'V|u|^{2}\notag\\
& = -\frac{\varepsilon}{2\lambda^{1/2}}\Re\int\psi' f\bar{u}-\Re \int f\psi' (\D^{r}\bar{u}\pm i\lambda^{1/2}\bar{u}) - \frac{(d-1)}{2}\Re\int \frac{\psi'}{|x|}f\bar{u}.\notag
\end{align}
\end{pro}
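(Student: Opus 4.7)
My plan is to produce (\ref{magneticidentity}) as a carefully weighted linear combination of the three Morawetz-type integral identities of Lemmas \ref{appendix1}--\ref{appendix2}, specialized to radial multipliers, and then to recognize the resulting sum as the two Sommerfeld squares $|\D^{r}u\mp i\lambda^{1/2}u|^{2}$ and $|\D u\mp i\lambda^{1/2}\tfrac{x}{|x|}u|^{2}$ plus the stated remainders.

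I would first apply identity (\ref{(4.3)}) to the given radial $\psi$. For such $\psi$ the Hessian is diagonal in the radial/tangential decomposition and gives $\D u\cdot D^{2}\psi\cdot\overline{\D u}=\psi''|\D^{r}u|^{2}+\frac{\psi'}{r}|\D^{\perp}u|^{2}$, while $\nabla\psi=\psi'\frac{x}{|x|}$ and $\Delta\psi=\psi''+\frac{d-1}{r}\psi'$. Consequently $\nabla(\Delta\psi)=\psi'''\frac{x}{|x|}+(d-1)\nabla(\psi'/r)$, which splits the Laplacian contribution of (\ref{(4.3)}) into a radial residual $\tfrac12\Re\!\int\psi'''\D^{r}u\bar u$ plus the desired term $\tfrac{d-1}{2}\Re\!\int\nabla(\psi'/r)\cdot\D u\bar u$ of (\ref{magneticidentity}).

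I would then add three further identities, chosen so that the remaining pieces of (\ref{magneticidentity}) emerge automatically: identity (\ref{(4.11)}) with $\varphi=\psi''/2$; identity (\ref{(4.11)}) with $\varphi=\psi'$ multiplied by $-\frac{\varepsilon}{2\lambda^{1/2}}$; and identity (\ref{(4.2)}) with $\varphi=\psi'$ multiplied by $\lambda^{1/2}$. These weights are arranged so that several cancellations and recombinations happen simultaneously. The $-\tfrac12\Re\!\int\nabla\psi''\cdot\D u\bar u$ piece of (\ref{(4.11)})$_{\psi''/2}$ exactly kills the radial residual coming from $\nabla(\Delta\psi)$, and its right-hand side $\tfrac12\Re\!\int\psi'' f\bar u$ cancels the $-\tfrac12\Re\!\int f\psi''\bar u$ coming from (\ref{(4.3)}). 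The contributions $\tfrac12\int\psi''|\D^{r}u|^{2}$ and $\tfrac\lambda2\int\psi''|u|^{2}$ (produced once $|\D u|^{2}=|\D^{r}u|^{2}+|\D^{\perp}u|^{2}$ is used in (\ref{(4.11)})$_{\psi''/2}$), together with the cross term $\mp\lambda^{1/2}\Im\!\int\psi''\D^{r}u\bar u$ supplied by $\lambda^{1/2}\cdot$(\ref{(4.2)})$_{\psi'}$, reassemble into $\tfrac12\int\psi''|\D^{r}u\mp i\lambda^{1/2}u|^{2}$. The $\varepsilon$-pieces $\frac{\varepsilon}{2\lambda^{1/2}}\int\psi'|\D u|^{2}$, the net $\frac{\varepsilon\lambda^{1/2}}{2}\int\psi'|u|^{2}$ (obtained from $\varepsilon\lambda^{1/2}\int\psi'|u|^{2}-\tfrac{\varepsilon\lambda^{1/2}}{2}\int\psi'|u|^{2}$), and the cross term $\pm\varepsilon\Im\!\int\nabla\psi\cdot\overline{\D u}u$ already present in (\ref{(4.3)}) recombine into $\frac{\varepsilon}{2\lambda^{1/2}}\int\psi'|\D u\mp i\lambda^{1/2}\tfrac{x}{|x|}u|^{2}$. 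Finally the $V$-, $\partial_{r}V$-, $B_{\tau}$- and $f$-terms collect into exactly the form displayed in (\ref{magneticidentity}), after noting that $-\Re\!\int f\psi'\overline{\D^{r}u}$ together with $\pm\lambda^{1/2}\Im\!\int\psi' f\bar u$ combine into $-\Re\!\int f\psi'(\D^{r}\bar u\pm i\lambda^{1/2}\bar u)$.

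The main issue to handle carefully is analytical rather than algebraic: each of (\ref{(4.3)}), (\ref{(4.11)}), (\ref{(4.2)}) is obtained by an integration by parts, and their termwise combination is legitimate only if every integral is absolutely convergent. The hypothesis $|\psi'(r)|\leq r$ for $r\leq 1$ controls $\psi'/r$ near the origin, while $|\psi^{(k)}|\leq C$ controls $\psi''$ and $\psi'''$ at infinity; combined with the a priori estimates of Lemma \ref{lemaapriori} and Remark \ref{2+} together with the singular-potential bounds (\ref{assV1})--(\ref{assB}), this suffices to make sense of all the terms involving $V$, $B_{\tau}$ and the higher derivatives of $\psi$, in particular near the origin where singularities are allowed. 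Once this convergence check is in place the identity (\ref{magneticidentity}) is an immediate consequence of the algebraic recombination described above; the substantive step of the proof is merely the identification of the four weights $1,\ \tfrac12,\ -\tfrac{\varepsilon}{2\lambda^{1/2}},\ \lambda^{1/2}$ that produce the two Sommerfeld squares on summing.
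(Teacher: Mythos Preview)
Your proposal is correct and follows essentially the same approach as the paper: the paper likewise takes the combination $(\ref{(4.3)})+(\ref{(4.11)})_{\varphi=\psi''/2}+\lambda^{1/2}(\ref{(4.2)})_{\varphi=\psi'}-\tfrac{\varepsilon}{2\lambda^{1/2}}(\ref{(4.11)})_{\varphi=\psi'}$, uses the radial formulas for $D^{2}\psi$ and $\Delta\psi$, and then completes the two Sommerfeld squares via the identity $|\D u\mp i\lambda^{1/2}\tfrac{x}{|x|}u|^{2}=|\D u|^{2}+\lambda|u|^{2}\mp 2\lambda^{1/2}\Im\tfrac{x}{|x|}\cdot\D u\bar u$. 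Your additional remarks on convergence (using $|\psi'(r)|\leq r$ near the origin together with (\ref{assV1})--(\ref{assB}) and Remark~\ref{2+}) are exactly the justifications the paper invokes.
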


\begin{proof}

The proof consists in the combination of the above identities. We first compute
$$
(\ref{(4.3)}) + (\ref{(4.11)}) + \lambda^{1/2}(\ref{(4.2)})
$$
putting $\varphi=\frac{1}{2}\psi''$ in (\ref{(4.11)}). Then since $\psi$ is radial, letting $r=|x|$ yields
\begin{equation}\label{hessianoa}
\D u \cdot D^{2}\psi \cdot \overline{\D u} = \psi''|\D^{r}u|^{2} + \frac{\psi'}{r}|\D^{\bot}u|^{2}
\end{equation}
and
\begin{equation}\label{lapl}
\Delta\psi = \frac{(d-1)\psi'}{r} + \psi''.
\end{equation}
Thus we obtain
 \begin{align}
&\frac{1}{2}\int\psi''(|\D^{r}u|^{2} + \lambda|u|^{2}) \mp \Im\lambda^{1/2}\int \psi''\D^{r} u\bar{u}+ \int \left(\frac{\psi'}{r} - \frac{\psi''}{2} \right)|\D^{\bot}u|^{2}\notag\\
&+\Re\frac{d-1}{2}\int \nabla\left(\frac{\psi'}{r}\right)\cdot \D u\bar{u} \pm \varepsilon\lambda^{1/2}\int \psi'|u|^{2} \mp \varepsilon\Im \int \psi' \frac{x}{|x|}\cdot\D u\bar{u} \notag\\
& - \frac{d-1}{2}\int \frac{\psi'}{r} V|u|^{2} -\Re\int V\psi'\frac{x}{|x|}\cdot\D u\bar{u} - \Im\int \psi' \bar{u} B_{\tau}\cdot \D u\notag\\ 
&=  - \Re\int f \psi'\frac{x}{|x|}\cdot\D\bar{u}- \frac{(d-1)}{2}\Re\int \frac{\psi'}{|x|}f\bar{u} \pm \Im \lambda^{1/2}\int \psi' f\bar{u}.\notag
\end{align}

Note that above we have used that the integral
$$ \Im\int \psi' \bar{u} B_{\tau}\cdot \D u $$
is finite due to the condition \eqref{assB} and the property that $|\psi'(r)| \leq r$ if $r\leq 1$. The same reasoning applies to the terms involving $V$, which after some integration by parts this terms can be written as
$$\int \psi' \partial_{r}V|u|^{2} $$
and using in this case condition (\ref{assV1}) is finite.

Let us subtract now the identity $(\ref{(4.11)})$ multiplied by $\varepsilon$ from the above equality with the choice of the test function
\begin{equation}
\varphi=\frac{1}{2\lambda^{1/2}}\psi'.
\end{equation}
Hence, from the fact that
\begin{equation}\label{cuadrado}
\left|\D u \mp i \lambda^{1/2}\frac{x}{|x|}u \right|^{2} = |\D u|^{2} + \lambda|u|^{2} \mp 2\lambda^{1/2}\Im \frac{x}{|x|}\cdot \D u \bar{u},
\end{equation}
we get the square related to $\varepsilon$, and we conclude (\ref{magneticidentity}).
\end{proof}

\section{Proof of Theorem \ref{Theorem1}}\label{section2}

The proof will be divided into two parts depending on the relation between $\varepsilon$ and $\lambda$. For simplicity, we will work with the equation
\begin{equation}\label{magneticequation6}
(\nabla + iA)^{2}u + Vu + (\lambda + i\varepsilon) u = f, \quad \quad \quad \varepsilon >0.
\end{equation}
The same reasoning applies to the case $\lambda - i\varepsilon$.

\vspace{0.2cm}
We begin with the case when $0<\varepsilon < \lambda$.

\begin{pro}\label{alpha0magnetico} Let $d\geq 2$, $0<\varepsilon <\lambda$, $f$ such that $\Vert |\cdot |f\Vert_{L^{2}}<\infty$. Assume that {(H1)}, {(H2)} hold. In addition, let $A$ be continuous on $\R^{d}\backslash \{0\}$ and $curl \, A \in L^{1}_{loc}(\R^{d}\backslash \{0\})$ if $d=2$. Then, solutions $u\in H^{1}_{A}(\Rd)$ of the Helmholtz equation (\ref{magneticequation6}) satisfy
\begin{align}\label{magneticsommerfeld0}
\int \left|\D u - i\lambda^{1/2}\frac{x}{|x|}u\right|^{2} + \frac{\varepsilon}{\lambda^{1/2}} \int |x|\left|\D u -i\lambda^{1/2}\frac{x}{|x|}u \right|^{2} \leq C \int |x|^{2}|f|^{2} ,
\end{align}
where $C>0$ is independent of $\varepsilon, \lambda$.
\end{pro}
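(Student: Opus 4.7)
The plan is to apply the key identity (2.23) of Proposition \ref{keypr} with the specific choice $\psi(r)=r^{2}/2$, so that $\psi'=r$, $\psi''=1$, and $\psi^{(k)}\equiv 0$ for $k\geq 3$ (in particular the regularity hypothesis $|\psi^{(k)}|\le C$ and $|\psi'(r)|\le r$ for $r\le1$ are satisfied). With this choice the first two positive terms on the left of \eqref{magneticidentity} combine cleanly:
\begin{equation}\notag
\frac{1}{2}|\D^{r}u-i\lambda^{1/2}u|^{2}+\Bigl(\frac{\psi'}{|x|}-\frac{\psi''}{2}\Bigr)|\D^{\bot}u|^{2}
=\frac{1}{2}|\D^{r}u-i\lambda^{1/2}u|^{2}+\frac{1}{2}|\D^{\bot}u|^{2}=\frac{1}{2}\Bigl|\D u-i\lambda^{1/2}\tfrac{x}{|x|}u\Bigr|^{2},
\end{equation}
since $i\lambda^{1/2}\frac{x}{|x|}u$ is purely radial. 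The $\varepsilon$-square term becomes $\frac{\varepsilon}{2\lambda^{1/2}}|x|\,|\D u-i\lambda^{1/2}\frac{x}{|x|}u|^{2}$, and the term containing $\nabla(\psi'/|x|)=\nabla(1)$ vanishes. After multiplying the identity by $2$ one recovers exactly the left-hand side of \eqref{magneticsommerfeld0}.

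The central trick to control the $B_{\tau}$ and $V$ contributions is to pass to the gauged function $v:=u\,e^{-i\lambda^{1/2}|x|}$, which satisfies $|v|=|u|$ and $\D v=(\D u-i\lambda^{1/2}\tfrac{x}{|x|}u)\,e^{-i\lambda^{1/2}|x|}$, so that $|\D v|^{2}$ is precisely the integrand we want to isolate. Since $B_{\tau}\cdot\tfrac{x}{|x|}=0$, the magnetic term can be rewritten as
\begin{equation}\notag
\Im\int rB_{\tau}\cdot\D u\,\bar u=\Im\int rB_{\tau}\cdot\bigl(\D u-i\lambda^{1/2}\tfrac{x}{|x|}u\bigr)\bar u,
\end{equation}
whence Cauchy-Schwarz and assumption (H2) applied to $v$ yield
\begin{equation}\notag
\Bigl|\Im\int rB_{\tau}\cdot\D u\,\bar u\Bigr|\le\Bigl(\int r^{2}|B_{\tau}|^{2}|v|^{2}\Bigr)^{1/2}\Bigl(\int|\D v|^{2}\Bigr)^{1/2}\le A_{B}\int|\D v|^{2}.
\end{equation}
Similarly, writing $\psi''V+\psi'\partial_{r}V=\partial_{r}(rV)$, splitting into positive and negative parts and applying (H2) to $v$ gives $\int(\partial_{r}(rV))_{-}|u|^{2}<A_{V}\int|\D v|^{2}$. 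After the multiplication by $2$ these two terms contribute at most $(A_{V}+2A_{B})\int|\D v|^{2}$, which by the smallness condition \eqref{peque} can be absorbed into the LHS, leaving a positive coefficient $1-(A_{V}+2A_{B})>0$.

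It remains to control the $\varepsilon$-correction terms $\frac{\varepsilon}{2\lambda^{1/2}}\Re\int\D^{r}u\,\bar u$ and $-\frac{\varepsilon}{2\lambda^{1/2}}\int rV|u|^{2}$, together with the forcing contributions on the right-hand side of \eqref{magneticidentity}. For the first, one uses $\Re(\D^{r}u\,\bar u)=\tfrac{1}{2}\partial_{r}|u|^{2}$ and an integration by parts to rewrite it as $-\frac{\varepsilon(d-1)}{4\lambda^{1/2}}\int|u|^{2}/|x|$, which is handled by Hardy together with the a priori bound $\varepsilon\int|u|^{2}\le\int|f||u|$ from \eqref{a+++0}; the hypothesis $\varepsilon<\lambda$ turns the factor $\varepsilon/\lambda^{1/2}$ into something controlled by $\lambda^{1/2}$, which is then balanced against the $\lambda|u|^{2}$ coming out of the square. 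The forcing terms are estimated by Cauchy-Schwarz as $\bigl(\int|x|^{2}|f|^{2}\bigr)^{1/2}$ times a quantity of the form $\bigl(\int|\D u-i\lambda^{1/2}\tfrac{x}{|x|}u|^{2}\bigr)^{1/2}$ or $\bigl(\int|u|^{2}/|x|^{2}\bigr)^{1/2}$, the latter being absorbed via Hardy \eqref{A.111}. All the $\int|\D v|^{2}$ remainders are reabsorbed into the LHS, and the desired estimate follows.

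The main obstacle is the $\varepsilon$-correction block: unlike the $B_{\tau}$ and $V$ terms, these do not enjoy an obvious gauge structure in terms of $v$, and one has to carry out the reabsorption with $\varepsilon<\lambda$ in an essential way to keep the constants uniform; additionally, in dimension $d=2$ one must invoke the Laptev--Weidl Hardy inequality \eqref{Hardydimension2} instead of \eqref{A.111} and verify that the extra regularity on $A$ (continuity on $\R^{2}\setminus\{0\}$ and $\mathrm{curl}\,A\in L^{1}_{loc}$) legitimizes the integrations by parts used in deriving the key identity.
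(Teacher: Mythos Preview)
Your proposal follows the paper's approach almost exactly: the multiplier $\psi(r)=r^{2}/2$, the gauge substitution $v=e^{-i\lambda^{1/2}|x|}u$, the use of $B_{\tau}\cdot\tfrac{x}{|x|}=0$ and (H2) to absorb the $B_{\tau}$ and $\partial_{r}(rV)$ contributions into $(A_{V}+2A_{B})\int|\D v|^{2}$, and Hardy (either \eqref{A.111} or \eqref{Hardydimension2}) for the forcing terms. Where your sketch diverges from the paper is in the treatment of the $\varepsilon$-correction block, and here your reasoning is not quite right.

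The term $\tfrac{\varepsilon(d-1)}{4\lambda^{1/2}}\int\tfrac{|u|^{2}}{|x|}$ cannot be ``balanced against the $\lambda|u|^{2}$ coming out of the square'': on the left-hand side one only has $\int|\D v|^{2}$ and its $|x|$-weighted analogue, and $|\D v|^{2}$ does \emph{not} dominate $\lambda|u|^{2}$ (indeed the whole point of $v$ is that the $\lambda|u|^{2}$ piece has been cancelled). The paper instead performs a near/far split at radius $\tfrac{\lambda^{1/2}\delta}{\varepsilon}$: on $\{|x|<\tfrac{\lambda^{1/2}\delta}{\varepsilon}\}$ one has $\tfrac{\varepsilon}{\lambda^{1/2}|x|}\le\tfrac{\delta}{|x|^{2}}$, giving a small multiple of $\int\tfrac{|u|^{2}}{|x|^{2}}$ which is then controlled by Hardy applied to $v$; on the complementary region $\tfrac{\varepsilon}{\lambda^{1/2}|x|}\le\tfrac{\varepsilon}{\delta}$ (using $\varepsilon<\lambda$), and the resulting $\tfrac{\varepsilon}{\delta}\int|u|^{2}$ is bounded by $\tfrac{1}{\delta}\int|f||u|$ via \eqref{a+++0}. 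The second $\varepsilon$-term you mention, $\tfrac{\varepsilon}{2\lambda^{1/2}}\int|x|V|u|^{2}$, is not handled through Hardy at all: the paper absorbs it directly into the weighted term $\tfrac{\varepsilon}{2\lambda^{1/2}}\int|x||\D v|^{2}$ on the left, using (H1) in an $|x|$-weighted form to obtain a coefficient $A_{V}<1$. Once these two points are made precise your argument coincides with the paper's.
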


\begin{proof}
The proof is based on the equality (\ref{magneticidentity}) given in Proposition \ref{keypr}. Let us denote $r=|x|$. We define 
$$
\psi(r) = \frac{r^{2}}{2}
$$ 
so that $\psi'(r)=r$, $\psi''(r)= 1$ and we put it into the identity (\ref{magneticidentity}). Thus by (\ref{cuadrado}) it follows that
\begin{align}\label{propositionjuan}
&\frac{1}{2}\int \left|\D u - i\lambda^{1/2}\frac{x}{|x|}u \right|^{2} + \frac{\varepsilon}{2\lambda^{1/2}}\int |x|\left|\D u - i\lambda^{1/2}\frac{x}{|x|}u \right|^{2}\\
&= \Im \int |x|B_{\tau} \cdot  \D u \bar{u} -\frac{1}{2}\int (\partial_{r}(rV))|u|^{2}+ \frac{\varepsilon}{2\lambda^{1/2}}\int |x|V|u|^{2}\notag\\
& + \frac{\varepsilon(d-1)}{4\lambda^{1/2}} \int \frac{|u|^{2}}{|x|} -\frac{\varepsilon}{2\lambda^{1/2}}\Re\int |x|f\bar{u}-\frac{(d-1)}{2}\Re\int f\bar{u}\notag\\
& -\Re\int |x|f\left( \overline{\D^{r}u} + i\lambda^{1/2}\bar{u}\right).\notag
\end{align}

Let $v=e^{-i\lambda^{1/2}|x|}u$ and observe that $|v| = |u|$, $|\nabla_{A} v| = \left|\nabla_{A} u -i\lambda^{1/2}\frac{x}{|x|}u\right|$. Let us estimate the right-hand side of (\ref{propositionjuan}). We start with the observation that
\begin{equation}\label{btau}
B_{\tau}\cdot \D u = B_{\tau} \cdot \left( \D u -i \lambda^{1/2}\frac{x}{|x|}u\right).
\end{equation}
Hence by Cauchy-Schwarz inequality and (H2), we have
\begin{align}
\Im\int |x|B_{\tau}\cdot\D u \bar{u}& \leq \left(\int |x|^{2}|B_{\tau}|^{2}|v|^{2} \right)^{1/2}\left(\int |\D v|^{2} \right)^{1/2}\notag\\
& < A_{B}\int |\D v|^{2}.\notag
\end{align}
Similarly, we get
\begin{align}
-\frac{1}{2}\int (\partial_{r}(rV))|u|^{2} & \leq \frac{1}{2}\int (\partial_{r}(rV))_{-}|u|^{2}\notag\\
&< \frac{A_{V}}{2}\int |\D v|^{2}.\notag
\end{align}
Let us observe now that  from assumption (H1) it follows that 
\begin{align}\label{H3}
\int |x|V|u|^{2} < \int |x||\nabla_{A} u|^{2}.
\end{align}
This combining with (H2), yields
\begin{align}
\frac{\varepsilon}{2 \lambda^{1/2}}\int |x|V|u|^{2} <\frac{\varepsilon A_{V}}{2\lambda^{1/2}}\int |x||\D v|^{2}\notag.
\end{align}
Let us now compute the term $\frac{\varepsilon}{\lambda^{1/2}}\int \frac{|u|^{2}}{|x|}$. To this end, let $\delta > 0$. Then since $\varepsilon < \lambda$ and by the a-priori estimate (\ref{a+++0}), we have
\begin{align}\label{landaepsilon}
\frac{\varepsilon}{\lambda^{1/2}}\int \frac{|u|^{2}}{|x|} &= \frac{\varepsilon}{\lambda^{1/2}}\int_{|x|<\frac{\lambda^{1/2}\delta}{\varepsilon}} \frac{|u|^{2}}{|x|} + \frac{\varepsilon}{\lambda^{1/2}}\int_{|x|\geq \frac{\lambda^{1/2}\delta}{\varepsilon}}\frac{|u|^{2}}{|x|}\\
& \leq \delta\int \frac{|u|^{2}}{|x|^{2}} + \frac{\varepsilon}{\delta}\int |u|^{2}\notag\\
& \leq \delta\int \frac{|u|^{2}}{|x|^{2}} + \frac{1}{\delta}\int |f||u|. \notag
\end{align}
Consequently, by Cauchy-Schwarz inequality and the magnetic Hardy inequality (\ref{A.111}) if $d\geq 3$ and (\ref{Hardydimension2}) when $d=2$, it follows that
\begin{align}
\frac{\varepsilon}{\lambda^{1/2}}\int \frac{|u|^{2}}{|x|} & \leq (\delta + \kappa)\int \frac{|u|^{2}}{|x|^{2}} + \frac{1}{4\kappa\delta^{2}}\int |x|^{2}|f|^{2}\notag\\
& \leq C_{H}(\delta + \kappa) \int |\D v|^{2} + C_{\kappa, \delta}\int |x|^{2}|f|^{2},\notag
\end{align}
for $\kappa >0$, where $C_{H}= \frac{4}{(d-2)^{2}}$ if $d\geq 3$ and $C_{H} = C_{H_{2}}$ when $d=2$. Similar arguments apply to estimate the terms containing $f$. By $\varepsilon < \lambda$, the a-priori estimate (\ref{a+++0}) and
\begin{equation} \label{juan1}
\D^{r}\bar{u} + i\lambda^{1/2}\bar{u} = \overline{\left(\D u - i\lambda^{1/2}\frac{x}{|x|}u \right)}\cdot \frac{x}{|x|},
\end{equation}
we deduce
\begin{align}
-\frac{\varepsilon}{2\lambda^{1/2}}\Re\int |x| f\bar{u} &\leq \frac{\varepsilon^{3/2}}{4\lambda^{1/2}}\int |u|^{2}+ \frac{\varepsilon^{1/2}}{4\lambda^{1/2}}\int |x|^{2}|f|^{2}\notag\\
& \leq \frac{\varepsilon}{4}\int |u|^{2} +\frac{1}{4}\int |x|^{2}|f|^{2}\notag\\
& \leq \kappa\int \frac{|u|^{2}}{|x|^{2}} + C_{\kappa}\int |x|^{2}|f|^{2}\notag\\
& \leq  C_{H} \kappa \int |\D v|^{2} + C_{\kappa}\int |x|^{2}|f|^{2},\notag
\end{align}
\begin{align}
-\frac{(d-1)\Re}{2}\int f\bar{u} \leq C_{H}\kappa \int |\D v|^{2} + C_{\kappa}\int |x|^{2}|f|^{2},\notag
\end{align}
\begin{align}
-\Re \int |x|f \left(\overline{\D^{r}u} +i\lambda^{1/2}\bar{u} \right)& \leq \kappa\int |\D v|^{2} + C_{\kappa}\int |x|^{2}|f|^{2},\notag
\end{align}
for arbitrary $\kappa>0$, being $C_{H}$ as above.

Thus it may be concluded that
\begin{align}
\frac{1}{2}&\int |\D v|^{2} +\frac{\varepsilon}{2\lambda^{1/2}}\int|x||\D v|^{2} \notag\\
&< \left(\frac{2A_{B}+A_{V}}{2} +C_{H} (\delta + 3\kappa)  + \kappa \right)\int |\D v|^{2} + \frac{\varepsilon A_{V}}{2\lambda^{1/2}}\int |x||\D v|^{2}\notag\\
& + C \int |x|^{2}|f|^{2}.\notag
\end{align}
Note that since $u\in H^{1}_{A}(\Rd)$, by Remark \ref{2+} it is a simple matter to check that the right-hand side of the above inequality is finite. Therefore, choosing $\kappa, \delta$ small enough and using that $A_{V} + 2A_{B}<1$, (\ref{magneticsommerfeld0}) is proved.
\end{proof}

\begin{remark}
Note that the identity (\ref{propositionjuan}) is true for any $d\geq 1$. However, since the magnetic  Hardy inequalities (\ref{A.111}) and (\ref{Hardydimension2}) are valid only in the three or higher dimensional case and in dimension two, respectively, the method of proof breaks down when $d=1$. In the one dimensional case, the main difficulty is about the analysis of the term $\frac{\varepsilon}{\lambda^{1/2}}\Re\int |x|f\bar{u}$. 
\end{remark}

The proof will be completed by showing the following estimate.

\begin{pro} Let $d\geq 2$, $\lambda \leq \varepsilon$, $f$ such that $\Vert |\cdot| f \Vert_{L^{2}} < \infty$. Assume that (H1) holds. Moreover, let $A$ be continuous on $\R^{d}\backslash \{0\}$ and $curl \, A \in L^{1}_{loc}(\Rd \backslash \{0\})$ if $d=2$. Then the solution $u\in H^{1}_{A}(\Rd)$ of the equation (\ref{magneticequation6}) satisfies
\begin{equation}\label{bai}
\int |\D u|^{2} \leq C\int |x|^{2}|f|^{2}.
\end{equation}
\end{pro}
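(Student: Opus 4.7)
The plan is to prove this bound without invoking the key identity of Proposition 2.5; instead I would rely entirely on the two elementary a-priori estimates \eqref{a+++0} and \eqref{b+++0} together with hypothesis (H1) and the magnetic Hardy inequality (\ref{A.111}) (or \eqref{Hardydimension2} in dimension two, which is why the extra regularity on $A$ is needed for $d=2$).

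First I would take the real part of the equation tested against $\bar u$, which is precisely \eqref{b+++0}, to obtain
\begin{equation*}
\int |\D u|^{2} \;\leq\; \lambda\int|u|^{2} + \int V|u|^{2} + \int|f||u|.
\end{equation*}
Applying (H1) to the middle term and absorbing gives
\begin{equation*}
(1-\nu)\int |\D u|^{2} \;\leq\; \lambda\int|u|^{2} + \int |f||u|.
\end{equation*}
Next I use the imaginary part \eqref{a+++0}, namely $\varepsilon\int|u|^{2}\leq \int|f||u|$, and the hypothesis $\lambda\leq\varepsilon$ to conclude $\lambda\int|u|^{2}\leq \int|f||u|$. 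Substituting yields
\begin{equation*}
(1-\nu)\int|\D u|^{2} \;\leq\; 2\int|f||u|.
\end{equation*}

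To finish I would estimate the right-hand side by Cauchy--Schwarz and Hardy:
\begin{equation*}
\int|f||u| \;\leq\; \left(\int|x|^{2}|f|^{2}\right)^{1/2}\!\left(\int\frac{|u|^{2}}{|x|^{2}}\right)^{1/2}
\;\leq\; C_{H}^{1/2}\left(\int|x|^{2}|f|^{2}\right)^{1/2}\!\left(\int|\D u|^{2}\right)^{1/2},
\end{equation*}
where $C_{H}=4/(d-2)^{2}$ for $d\geq 3$ and $C_{H}=C_{H_{2}}$ for $d=2$. Dividing both sides by $\left(\int|\D u|^{2}\right)^{1/2}$ and squaring gives the desired inequality with $C=\bigl(2C_{H}^{1/2}/(1-\nu)\bigr)^{2}$.

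The only subtle point is that to legitimately divide by $\left(\int|\D u|^{2}\right)^{1/2}$ one needs this quantity to be finite, which is guaranteed a priori by $u\in H^{1}_{A}(\Rd)$ (see Remark \ref{2+}). Apart from that, everything is straightforward: no Morawetz multiplier, no angular decomposition, and no use of $B_\tau$ or (H2) is required, which is consistent with the hypotheses of the proposition only asking for (H1).
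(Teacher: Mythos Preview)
Your proof is correct and follows essentially the same route as the paper's: test the equation against $\bar u$, take the real part, absorb the $V$-term via (H1), control $\lambda\int|u|^2$ by $\int|f||u|$ using $\lambda\leq\varepsilon$ and \eqref{a+++0}, and close with Cauchy--Schwarz plus the magnetic Hardy inequality. Your write-up is in fact a bit more explicit than the paper's about tracking the constant $(1-\nu)$ and about the finiteness of $\int|\D u|^2$ needed to divide through.
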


\begin{proof}
For this purpose, let us multiply equation (\ref{magneticequation6}) by $\bar{u}$ and integrate over $\Rd$. Then, taking the real part yields
\begin{equation}\notag
\int |\D u|^{2} = \lambda\int |u|^{2} + \int V|u|^{2} - \Re \int f\bar{u}.
\end{equation} 
Since $\lambda \leq \varepsilon$, by assumption (H1) and the a-priori estimate (\ref{a+++0}), we have
\begin{align}
\int |\D u|^{2} & \leq 2\int |f||u|\notag\\
& \leq \left(\int \frac{|u|^{2}}{|x|^{2}} \right)^{1/2}\left(\int |x|^{2}|f|^{2} \right)^{1/2},\notag
\end{align}
and the result follows using the magnetic Hardy inequality \eqref{A.111} when $d\geq 3$ and (\ref{Hardydimension2}) if $d=2$.
\end{proof}

\section{Proof of Theorem \ref{TheoremSRC}}\label{section3}

We now proceed to show the sharp Sommerfeld radiation condition (\ref{keySRC}). In order to get this inequality, it will be necessary to control the Agmon-H\"ormander norm of solutions $u$ of the electromagnetic Helmholtz equation (\ref{magneticequation6}), as well as the same norm for the magnetic gradient $\D u$ and the term $\int \frac{|\D^{\bot}u|^{2}}{|x|}$ related to the tangential component of the magnetic gradient. We will work with potentials such that for any $\lambda \geq \lambda_{0} >0$ solutions $u\in H^{1}_{A}(\Rd)$ of the equation (\ref{magneticequation6}) satisfy the following a-priori estimate
\begin{equation}\label{Morrey}
\lambda|||u|||_{1}^{2} + |||\D u|||_{1}^{2} + \int \frac{|\D^{\bot} u|^{2}}{|x|} \leq C(1+\varepsilon) (N_{1}(f))^{2},
\end{equation}
where $C=C(\lambda_{0}) >0$.

\begin{remark}
Under the hypotheses of Theorem \ref{TheoremSRC}, estimate (\ref{Morrey}) can be proved following \cite{Z}, \cite{F} or \cite{PV1}. In addition, using the same arguments as in \cite{IS} and in \cite{Z}, from Theorem \ref{Theorem1} follows
\begin{equation}\notag
\lambda |||u|||_{1} + |||\D u|||_{1}^{2} \leq C(\lambda_{0}) \int |x|^{2}|f|^{2}.
\end{equation}
\end{remark}

As a warm up we  give the $\alpha =1$ version of (\ref{somalpha})  for the constant coefficient case. 

\subsection{$\alpha=1$ version for the constant coefficient case ($A\equiv 0 \equiv V$)}

Let us consider the Helmholtz equation
\begin{equation}\label{1}
\Delta u + \lambda u + i\varepsilon u = f.
\end{equation}
Then we can state the following inequality.

\begin{lem}\label{lemmaalpha1}
Let $d\geq 1$, $\lambda >0$, $\varepsilon >0$ and $f$ such that $\Vert |\cdot|^{3/2}f\Vert_{L^{2}}<\infty$, $\Vert |\cdot|^{2}f\Vert_{L^{2}}<\infty$. Then any solution $u\in H^{1}(\Rd)$ of the equation (\ref{1}) satisfies
\begin{align}\label{sommerfeld1}
\frac{1}{4}\int |x| &\left|\partial_{r} u - i\lambda^{1/2}u + \frac{d-1}{2|x|}u\right|^{2} +
 \frac{\varepsilon}{4\lambda^{1/2}}\int |x|^{2}\left|\nabla u - i\lambda^{1/2}\frac{x}{|x|}u\right|^{2}\\
& \leq \frac{1}{4} \int
|x|^{3} |f|^{2} + \frac{d}{4\lambda^{1/2}} \int
|f||u| + \frac{\varepsilon}{4\lambda^{1/2}}\int |x|^{2}|f||u|,\notag
\end{align}
\end{lem}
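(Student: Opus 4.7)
My plan is to apply Proposition \ref{keypr} with the choice
$$\psi(r) = \frac{r^{3}}{6}, \qquad \psi'(r) = \frac{r^{2}}{2}, \qquad \psi''(r) = r.$$
This is natural for two reasons: (i) the tangential coefficient $\psi'/|x| - \psi''/2 = r/2 - r/2 = 0$ vanishes, so the $|\nabla^{\bot}u|^{2}$ term disappears; (ii) the $\varepsilon$--square on the left-hand side already appears with the exact coefficient $\frac{\varepsilon}{4\lambda^{1/2}}\int |x|^{2}|\nabla u - i\lambda^{1/2}\tfrac{x}{|x|}u|^{2}$ demanded by the statement. Since $A\equiv V\equiv 0$, all contributions involving $B_{\tau}$, $V$ and $\partial_{r}V$ in \eqref{magneticidentity} drop out.

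First I would process the nontrivial left-hand side. The third term of \eqref{magneticidentity} reads $\tfrac{d-1}{4}\Re\int \partial_{r}u\,\bar u$, and integration by parts in the radial direction yields $\Re\int \partial_{r}u\,\bar u = -\tfrac{d-1}{2}\int |u|^{2}/|x|$. Combining the resulting $-\tfrac{(d-1)^{2}}{8}\int|u|^{2}/|x|$ with the radial square $\tfrac{1}{2}\int r|\partial_{r}u - i\lambda^{1/2}u|^{2}$ and completing the square produces exactly
$$\tfrac{1}{2}\int |x|\,\Bigl|\partial_{r}u - i\lambda^{1/2}u + \tfrac{d-1}{2|x|}u\Bigr|^{2}.$$
Next, the cross term $\frac{\varepsilon}{2\lambda^{1/2}}\Re\int r\,\partial_{r}u\,\bar u$ equals $-\frac{\varepsilon d}{4\lambda^{1/2}}\int|u|^{2}$ (using $\partial_{r}(r^{d}) = d r^{d-1}$ and integrating by parts). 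Passing it to the right and invoking the standard a-priori bound $\varepsilon\int |u|^{2}\leq\int |f||u|$ from \eqref{a+++0} converts it into $\tfrac{d}{4\lambda^{1/2}}\int |f||u|$, exactly as in the claim.

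The decisive step is the rewriting of the right-hand side of \eqref{magneticidentity}. With $\psi' = r^{2}/2$ it reads
$$-\frac{\varepsilon}{4\lambda^{1/2}}\Re\!\int r^{2}f\bar u \; - \; \frac{1}{2}\Re\!\int r^{2} f\bigl(\partial_{r}\bar u + i\lambda^{1/2}\bar u\bigr) \; - \; \frac{d-1}{4}\Re\!\int r\, f\bar u,$$
and using $\partial_{r}\bar u + i\lambda^{1/2}\bar u = \overline{\partial_{r}u - i\lambda^{1/2}u + \tfrac{d-1}{2|x|}u} - \tfrac{d-1}{2|x|}\bar u$, the last two summands combine into
$$-\tfrac{1}{2}\Re\!\int r^{2} f\;\overline{\Bigl(\partial_{r}u - i\lambda^{1/2}u + \tfrac{d-1}{2|x|}u\Bigr)}.$$
This is why the weight $|x|^{3}$ is the right one: Cauchy--Schwarz and Young's inequality $ab\leq \tfrac{a^{2}}{2}+\tfrac{b^{2}}{2}$ bound this by $\tfrac{1}{4}\int |x|^{3}|f|^{2} + \tfrac{1}{4}\int |x|\,|\cdots|^{2}$, and the second summand absorbs half of the Sommerfeld expression already produced on the left, leaving the exact coefficient $1/4$ claimed in \eqref{sommerfeld1}. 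The remaining term $-\frac{\varepsilon}{4\lambda^{1/2}}\Re\int r^{2}f\bar u$ is estimated trivially by $\frac{\varepsilon}{4\lambda^{1/2}}\int |x|^{2}|f||u|$.

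The main obstacle is essentially algebraic rather than analytic: three constants (the $1/2$ in front of the radial square coming from $\psi''/2$, the $(d-1)/2$ appearing in the correction, and the $\psi'=r^{2}/2$ weight in the mixed $f$-term) must match so that the completion-of-square on the left and the conjugation rewrite on the right produce the \emph{same} quadratic expression, allowing Young's inequality with sharp constants to close the estimate with no slack. The only analytic care required is in justifying the integrations by parts -- since $\psi'$ grows at infinity -- which is standard: multiply by a smooth radial cutoff $\chi(|x|/R)$, derive the inequality for the truncated multiplier, and pass to the limit $R\to\infty$ using the $|x|^{3/2}f, |x|^{2}f \in L^{2}$ hypotheses together with the $H^{1}$ control of $u$.
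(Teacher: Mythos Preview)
Your proof is correct and follows essentially the same route as the paper's own argument: the paper also plugs $\psi\propto r^{3}$ (it writes $\psi(r)=r^{3}/3$, which differs from your $\psi(r)=r^{3}/6$ only by an irrelevant overall factor) into the key identity \eqref{constantidentity}, completes the square to obtain the expression $\bigl|\partial_{r}u - i\lambda^{1/2}u + \tfrac{d-1}{2|x|}u\bigr|^{2}$, uses \eqref{a+++0} to convert $\varepsilon\!\int|u|^{2}$ into $\int|f||u|$, and then applies Cauchy--Schwarz/Young to the $f$-term with the weight split $r^{2}=r^{3/2}\cdot r^{1/2}$ to absorb half of the Sommerfeld square. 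Your write-up is in fact more explicit than the paper's about how the absorption of the $\tfrac{1}{4}\!\int|x|\,|\cdots|^{2}$ piece yields the final constant $1/4$.
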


\begin{proof}
The proof is based on the analogue identity of (\ref{magneticidentity}) for $A\equiv 0$, $V=0$. Note that in this case, $\D \equiv \nabla$. Thus we have
\begin{align}\label{constantidentity}
&\frac{1}{2}\int\psi''(|\partial_{r}u|^{2} + \lambda|u|^{2}) - \Im\lambda^{1/2}\int \psi''\partial_{r}u\bar{u}+ \int \left(\frac{\psi'}{|x|} - \frac{\psi''}{2} \right)|\nabla^{\bot}u|^{2}\\
&+\Re\frac{(d-1)}{2}\int \nabla\left(\frac{\psi'}{|x|}\right)\cdot\nabla u\bar{u}+ \frac{\varepsilon}{2\lambda^{1/2}}\int \psi'\left|\nabla(e^{-i\lambda^{1/2}|x|}u)\right|^{2} \notag\\
&+\frac{\varepsilon}{2\lambda^{1/2}}\Re\int \psi''\partial_{r}u\bar{u} = -\frac{\varepsilon}{2\lambda^{1/2}}\Re\int\psi' f\bar{u}-\Re \int f\psi' (\partial_{r}\bar{u}+i\lambda^{1/2}\bar{u})\notag\\
& - \frac{(d-1)}{2}\Re\int \frac{\psi'}{|x|}f\bar{u}.\notag
\end{align}
Let us define
$$
\psi(r)=\frac{r^{3}}{3}, \quad \quad r=|x|
$$
and we put it into (\ref{constantidentity}). Hence, since
\begin{align}
\left|\partial_{r}u - i\lambda^{1/2}u +\frac{(d-1)}{2|x|}u\right|^{2} &= |\partial_{r}u|^{2} + \lambda|u|^{2} + \frac{(d-1)^{2}}{4|x|^{2}}|u|^{2}\notag \\
& - 2\lambda^{1/2}\Im \partial_{r}u \bar{u} + \Re \frac{(d-1)}{2|x|}\partial_{r}u\bar{u},\notag
\end{align}
we obtain
\begin{align}\label{identityalpha1}
&\frac{1}{2}\int_{\Rd} |x|\left|\partial_{r}u - i\lambda^{1/2}u +\frac{(d-1)}{2|x|}u\right|^{2} + \frac{\varepsilon}{4\lambda^{1/2}}\int_{\Rd} |x|^{2}\left|\nabla u - i\lambda^{1/2}\frac{x}{|x|}u\right|^{2}\\
& =\frac{d\varepsilon}{4\lambda^{1/2}}\int_{\Rd}|u|^{2}  -\frac{1}{2}\Re \int f|x|^{2}\left(\partial_{r}\bar{u}+i\lambda^{1/2}\bar{u} +\frac{(d-1)}{2}\bar{u}\right)-\frac{\varepsilon}{4\lambda^{1/2}}\Re\int_{\Rd}|x|^{2} f\bar{u},\notag
\end{align}
and by (\ref{a+++0}) the lemma follows.
\end{proof}

\begin{remark}
It is not our purpose to give the corresponding estimate for the electromagnetic case. However, under suitable assumptions on the potentials, the analogue of (\ref{sommerfeld1}) may be obtained in much the same way as in the constant coefficient case.
\end{remark}

\subsection{Sharp Sommerfeld condition}

We are now in a position to show the goal of this section.

\begin{pro}\label{som}
Let $d\geq 3$, $\lambda_{0}, \varepsilon>0$ and $f$ such that $N(f)<\infty$ and $\Vert |\cdot|^{2}f\Vert_{L^{2}}<\infty$. Let the potentials satisfy {\bf(H3)}. Then, there exists positive constant $C= C(\lambda_{0})$ such that for any $R\geq 1$ and $\lambda \geq \lambda_{0}$ the solution $u\in H^{1}_{A}(\Rd)$ to the Helmholtz equation (\ref{magneticequation6}) satisfies
\begin{align}\label{keymagnetic}
&\int_{|x|\leq R} |x|\left|\D^{r}u -i \lambda^{1/2}u + \frac{(d-1)}{2|x|}u \right|^{2} + R \int_{|x|\geq 2R} |\nabla_{A}(e^{-i\lambda^{1/2}|x|}u)|^{2}\\ 
&+\frac{\varepsilon}{\lambda^{1/2}}\int_{|x|\leq R}|x|^{2}|\nabla_{A}(e^{-i\lambda^{1/2}|x|}u)|^{2} + \frac{\varepsilon R}{\lambda^{1/2}}\int_{|x|\geq 2R}|x||\nabla_{A}(e^{-i\lambda^{1/2}|x|}u)|^{2}\notag\\
& \leq C\left[ |||u|||_{1}^{2} + |||\D u|||_{1} + \int \frac{|\D^{\bot}u|^{2}}{|x|} + \frac{(N_{1}(f))^{2}}{\lambda} +  \int (1+\varepsilon |x|)|x|^{3} |f|^{2} \right]\notag.
\end{align}
\end{pro}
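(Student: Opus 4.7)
The plan is to apply the key identity (\ref{magneticidentity}) of Proposition \ref{keypr} with a carefully chosen radial multiplier $\psi=\psi_R$ adapted to the scale $R$. Concretely, I would take $\psi_R$ smooth and monotone with $\psi_R'(r)=r^2$ for $0\le r\le R$, $\psi_R'(r)=cRr$ (equivalently $\psi_R''(r)=cR$) for $r\ge 2R$ and a smooth monotone interpolation on $[R,2R]$ ensuring $\psi_R''\ge 0$ and $\psi_R'/r-\psi_R''/2\ge 0$ throughout, with $|\psi_R^{(k)}|$ uniformly bounded for some $k\ge 3$. This is exactly the $R$-localized version of the multiplier $r^3/3$ used in the constant-coefficient Lemma \ref{lemmaalpha1}.

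With this choice, the left-hand side of (\ref{magneticidentity}) delivers the four terms on the left of (\ref{keymagnetic}) once I discard the transition annulus $R<|x|<2R$, where all relevant coefficients are non-negative. On $|x|\le R$ one has $\psi_R''/2=r$ and $\psi_R'/r-\psi_R''/2=0$, so after integrating by parts the term $\Re\frac{d-1}{2}\int\nabla(\psi_R'/|x|)\cdot\D u\,\bar u$ (using $\Re(iA\cdot x/|x|)|u|^2=0$, so only the $\partial_r|u|^2$ piece survives) I obtain, exactly as in (\ref{identityalpha1}), the perfect-square lower bound $\int_{|x|\le R}r\bigl|\D^r u-i\sqrt{\lambda}\,u+\tfrac{d-1}{2|x|}u\bigr|^2$. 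On $|x|\ge 2R$ both $\psi_R''/2$ and $\psi_R'/r-\psi_R''/2$ equal $cR/2$, so the two quadratic blocks combine to $\tfrac{cR}{2}|\D u-i\sqrt\lambda (x/|x|)u|^2=\tfrac{cR}{2}|\D(e^{-i\sqrt\lambda|x|}u)|^2$ by (\ref{cuadrado}). The $\varepsilon$-weighted terms read off similarly from $\frac{\varepsilon}{2\sqrt\lambda}\int\psi_R'|\D(e^{-i\sqrt\lambda|x|}u)|^2$, producing the $|x|^2$ weight inside $|x|\le R$ and the $R|x|$ weight outside $|x|\ge 2R$.

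For the right-hand side the strategy is: use the decay in (H3) to convert the potential-induced integrals against $\psi_R'$ and $\psi_R''$ into quantities already controlled by the a priori bound (\ref{Morrey}). The magnetic term is handled by writing $B_\tau\cdot\D u=B_\tau\cdot(\D u-i\sqrt\lambda (x/|x|)u)$ as in (\ref{btau}), followed by Cauchy--Schwarz: the weight $\psi_R'|B_\tau|$ is bounded by $r^\alpha$ near the origin (absorbed by the magnetic Hardy inequality) and by $Rr\cdot r^{-3-\alpha}\le r^{-1-\alpha}$ at infinity, so the whole integral is dominated by $|||\D u|||_1^2+|||u|||_1^2+\int|\D^\perp u|^2/|x|$. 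The electric term $\int(\psi_R''V+\psi_R'\partial_rV)|u|^2$ is treated the same way — integrate by parts to write it as $\int\psi_R'\partial_r(rV)|u|^2/r$ plus lower order — and controlled by $|||u|||_1^2$ and the Hardy inequality thanks to (H3). The terms involving $f$ are handled by Cauchy--Schwarz with the weight $|x|^3$ for those paired with $\D^r u$ or $\sqrt\lambda\,u$, and by the Agmon--Hörmander duality $|\int f\bar u|\le N_1(f)\cdot|||u|||_1$ for the $\frac{d-1}{2}\Re\int(\psi_R'/|x|)f\bar u$ piece, which accounts for the $N_1(f)^2/\lambda$ contribution. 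Residual $\varepsilon$-pieces from $\Re\int\psi_R''\D^r u\,\bar u$ and $\int\psi_R'V|u|^2$ are absorbed using (\ref{a+++0}), (\ref{b+++0}) and the smallness $\varepsilon\le 1+\varepsilon$.

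The main obstacle I expect is the bookkeeping of the error terms generated both in the transition region $[R,2R]$ (where $\psi_R^{(3)}\neq 0$ and one produces spurious $\int_{R\le|x|\le 2R}(\cdot)$ integrals that must be absorbed into $|||u|||_1^2$, $|||\D u|||_1^2$, or $\int|\D^\perp u|^2/|x|$ uniformly in $R$) and in the expansion of the perfect square on $|x|\le R$, where the $\frac{(d-1)^2}{4|x|^2}|u|^2$ piece must be matched precisely by the integration-by-parts manipulation of $\Re\int\nabla(\psi_R'/|x|)\cdot\D u\bar u$. A secondary delicate point is that the choice $\psi_R'\sim Rr$ (rather than $\sim r^2$) in the exterior is forced: it is what keeps the dependence on $R$ linear and produces the weight $R$ (not $|x|$) in the second term of (\ref{keymagnetic}), and it is also what allows the potential estimates above to close using only (H3) at infinity.
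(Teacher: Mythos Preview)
Your strategy is correct and essentially parallel to the paper's, but the paper makes one different technical choice that you should be aware of. Rather than a smooth interpolation on $[R,2R]$, the authors use the \emph{piecewise} multiplier
\[
\psi'_{R_1}(r)=\begin{cases} r^2,& r\le R_1,\\ R_1 r,& r\ge R_1,\end{cases}
\]
with a single corner at $r=R_1$. The jump of $\psi''$ then produces genuine \emph{surface} integrals $\int_{|x|=R_1}|u|^2$ and $\tfrac{\varepsilon}{\sqrt\lambda}\int_{|x|=R_1}|x||u|^2$ on the right, and these are the only ``transition'' errors. To control them the paper first runs a pigeonhole argument: since
\[
|||u|||_R^{2}\ \ge\ \frac{1}{2R}\int_{R}^{2R}\int_{|x|=r}|u|^{2}\,dr\ \ge\ \frac{\log 2}{2}\,\inf_{R\le r\le 2R}\int_{|x|=r}|u|^{2},
\]
one can pick $R_1\in[R,2R]$ with $\int_{|x|=R_1}|u|^{2}\lesssim |||u|||_{1}^{2}$ (and similarly for the $\varepsilon$--weighted boundary term via $\varepsilon\!\int|u|^{2}\le\!\int|f||u|$), prove (\ref{keymagnetic}) at that $R_1$, and then pass back to $R$ by comparability.

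Your smooth-interpolation variant avoids the pigeonhole step and replaces the surface terms by volume integrals over the annulus $R\le|x|\le 2R$; as you note, after one integration by parts these carry a coefficient of order $1/R$ (from $(\psi_R'/r)''$ and $\tfrac{d-1}{r}(\psi_R'/r)'$) and are therefore bounded by $|||u|||_1^{2}$ uniformly in $R$. Both routes close; the paper's sharp multiplier keeps the computation shorter (no need to construct an interpolant with $\psi''\ge0$ and $\psi'/r-\psi''/2\ge0$ on the annulus), while yours is more self-contained in that it does not require selecting a favorable radius. The treatment of the $B_\tau$, $V$, and $f$ terms that you sketch matches the paper's line by line.
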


\begin{proof}
Let $R \geq 1$ and observe that 
\begin{align}
|||u|||_{R}^{2} & \geq \frac{1}{2R}\int_{|x|\leq 2R} |u|^{2} \geq \frac{1}{2R} \int_{R\leq |x| \leq 2R} |u|^{2} = \frac{1}{2R}\int_{R}^{2R} \frac{1}{r}\int_{|x|=r} |x||u|^{2}\notag\\
&  \geq \frac{\log 2}{2} \inf_{R\leq r \leq 2R} \int_{|x|=r} |u|^{2},\notag
\end{align}
which implies that there exists $R_{1}$ such that $R \leq R_{1} \leq 2R$ and satisfies
\begin{align}
& \int_{|x|=R_{1}} |u|^{2} \leq C|||u|||_{R_{1}}^{2},\label{boundary1}\\
& \int_{|x|=R_{1}} |x||u|^{2} \leq C\int |u|^{2},\label{boundary2}
\end{align}
with $C>1$. We will prove estimate (\ref{keymagnetic}) for this $R_{1}$, and then, since $R_{1}$ and $R$ are comparable, we will deduce the result for any $R\geq 1$.

Let us define the multiplier
\begin{equation}\notag
\psi'(|x|) = \left\{ \begin{array}{ll}
|x|^{2} & \textrm{if $|x| \leq 1$},\vspace{0.1cm} \\
|x|& \textrm{if $|x| \geq 1$},
\end{array} \right.
\end{equation}
and set $\psi'_{R_{1}}(|x|) = R_{1}^{2}\psi'\left(\frac{|x|}{R_{1}}\right)$. Thus we get
\begin{equation}\notag
\psi'_{R_{1}}(|x|) = \left\{ \begin{array}{ll}
|x|^{2} & \textrm{if $|x| \leq R_{1}$},\vspace{0.1cm} \\
R_{1}|x|& \textrm{if $|x| \geq R_{1}$},
\end{array} \right.
\end{equation}
so that in the distributional sense yields
\begin{equation}\notag
\psi''_{R_{1}}(|x|) = \left\{ \begin{array}{ll}
2|x| & \textrm{if $|x| \leq R_{1}$},\vspace{0.1cm} \\
R_{1}& \textrm{if $|x| \geq R_{1}$}.
\end{array} \right.
\end{equation}

Let us put $\psi'_{R_{1}}$ into the identity (\ref{magneticidentity}). For simplicity, we start by considering the case when $A_{j}=V=0$, $j=1,\ldots,d$. Thus denoting $v=e^{-i\lambda^{1/2}|x|}u$ we get 
\begin{align}\label{keyidentity}
&\int_{|x|\leq R_{1}} |x|\left|\D^{r} u -i\lambda^{1/2}u + \frac{(d-1)}{2|x|}u\right|^{2} + \frac{R_{1}}{2}\int_{|x|\geq R_{1}} \left|\D u -i\lambda^{1/2}\frac{x}{|x|}u\right|^{2}\\
& + \frac{\varepsilon}{2\lambda^{1/2}}\left[ \int_{|x|\leq R_{1}} |x|^{2}\left|\D v\right|^{2} + R_{1}\int_{|x|\geq R_{1}} |x|\left|\D v\right|^{2}\right]\notag\\
& = \frac{d\varepsilon}{2\lambda^{1/2}}\int_{|x|\leq R_{1}} |u|^{2} + \frac{\varepsilon(d-1)R_{1}}{4\lambda^{1/2}}\int_{|x|\geq R_{1}} \frac{|u|^{2}}{|x|} \notag\\
&   -\frac{\varepsilon R_{1}}{2\lambda^{1/2}}\Re\int_{|x|\geq R_{1}}|x|f\bar{u} - \Re\int_{|x|\leq R_{1}} f|x|^{2}\left(\D^{r}\bar{u} +i\lambda^{1/2}\bar{u} +\frac{d-1}{2|x|}\bar{u} \right)\notag\\
& -\Re R_{1}\int_{|x|\geq R_{1}} |x|f(\D^{r}\bar{u} + i\lambda^{1/2}\bar{u}) -\frac{(d-1)R_{1}}{2}\Re\int_{|x|\geq R_{1}} f\bar{u}\notag\\
& - \frac{\varepsilon}{2\lambda^{1/2}}\Re\int_{|x|\leq R_{1}}|x|^{2}f\bar{u}+\frac{(d-1)}{4}\int_{|x|=R_{1}} |u|^{2} + \frac{\varepsilon}{4\lambda^{1/2}}\int_{|x|=R_{1}}|x||u|^{2}.\notag
\end{align}

Let us analyze now the right hand side of the above equality. By the a-priori estimate (\ref{a+++0}), the $\varepsilon$ terms can be upper bounded by the following observation
\begin{align}\notag
\frac{\varepsilon}{\lambda^{1/2}}\int |u|^{2} + \frac{\varepsilon}{\lambda^{1/2}} \int |x|^{2}|f||u| & \leq C \left(|||u|||_{1}^{2} + \frac{(N_{1}(f))^{2}}{\lambda} + \frac{\varepsilon}{\lambda^{1/2}}\int |x|^{4}|f|^{2} \right).
\end{align}
In addition, it is easy to check that
\begin{align}
R_{1}\int_{|x|\geq R_{1}} |f||u| & \leq R_{1} \sum_{2^{j} \geq R_{1}} \int_{C(j)} |f||u|\notag\\
& \leq C\left(\int |x|^{3}|f|^{2} \right)^{1/2}|||u|||_{1}.\notag
\end{align}
By using (\ref{juan1}), for $\kappa >0$ we get
\begin{align}\notag
\left| R_{1}\int_{|x|\geq R_{1}} |x| f (\D^{r}\bar{u} + i\lambda^{1/2}\bar{u}) \right| \leq \kappa R_{1}\int_{|x|\geq R_{1}} |\D v|^{2} + C(\kappa)\int |x|^{3}|f|^{2},
\end{align}
\begin{align}
\left|\int_{|x|\leq R_{1}} f|x|^{2}\left(\D^{r}\bar{u} + i\lambda^{1/2}\bar{u} + \frac{d-1}{2|x|}\bar{u} \right) \right| &\leq \kappa \int_{|x|\leq R_{1}} |x|\left| \D^{r}u -i\lambda^{1/2} u + \frac{d-1}{2|x|}u \right|^{2}\notag\\
& + C(\kappa)\int |x|^{3}|f|^{2}. \notag
\end{align}
The surface integrals pose no problem. By (\ref{boundary1}) and (\ref{boundary2}) we have
\begin{align}
\int_{|x|=R_{1}} |u|^{2}& + \frac{\varepsilon}{\lambda^{1/2}} \int_{|x|=R_{1}}|x||u|^{2} \leq C|||u|||_{R_{1}}^{2} + \frac{C\varepsilon}{\lambda^{1/2}} \int |u|^{2}\notag\\
& \leq C|||u|||_{R_{1}}^{2} + C|||u|||_{1}\frac{N_{1}(f)}{\lambda^{1/2}}. \notag
\end{align}
Thus, from the above estimates and choosing $\kappa >0$ small enough, we get (\ref{keymagnetic}) for the free case.

We now turn to the case when $A \neq 0$ and $V\neq 0$. After substituting the above multiplier into the identity (\ref{magneticidentity}), in the right-hand side of the resulting equality the integrals related to $B_{\tau}$ and $V$ are
\begin{align}
&\Im\int_{|x|\leq R_{1}} |x|^{2}B_{\tau}\cdot \overline{\D u}u - \Im R_{1}\int_{|x|\geq R_{1}} |x|B_{\tau}\cdot \D u\bar{u} - \int_{|x|\leq R_{1}} |x|V|u|^{2} \notag\\
& -\frac{1}{2}\int_{|x|\leq R_{1}} |x|^{2}(\partial_{r}V)|u|^{2} -\frac{R_{1}}{2}\int_{|x|\geq R_{1}} V|u|^{2} - \frac{R_{1}}{2}\int_{|x|\geq R_{1}} |x|(\partial_{r}V)|u|^{2}\notag\\
& +\frac{\varepsilon}{2\lambda^{1/2}}\int_{|x|\leq R_{1}} |x|^{2}V|u|^{2} + \frac{\varepsilon R_{1}}{2\lambda^{1/2}}\int_{|x|\geq R_{1}} |x|V|u|^{2}. \notag
\end{align}
Let us treat the above terms. We start with the magnetic ones. Noting that
$$
B_{\tau} \cdot \D u = B_{\tau} \cdot \D^{\bot} u,
$$
by Cauchy-Schwarz inequality and (H3) we get
\begin{align}
\Im\int_{|x|\leq R_{1}}& |x|^{2}B_{\tau}\cdot \overline{\D u}u + \Im R_{1}\int_{|x|\geq R_{1}}|x|B_{\tau}\cdot\overline{\D u}u\notag\\
&  \leq \left( \int_{|x|\leq 1} |\D u|^{2}\right)^{\frac{1}{2}}\left( \int_{|x|\leq 1} |x|^{4}|B_{\tau}|^{2}|u|^{2} \right)^{\frac{1}{2}} + \int_{|x|\geq 1} |x|^{2}|B_{\tau}||\D^{\bot} u| |u| \notag\\
& \leq C \left( ||| \D u|||_{1}^{2} +\int \frac{|\D^{\bot} u|^{2}}{|x|} + |||u|||_{1}^{2} \right).\notag
\end{align}
As far as the electric potential is concerned, note that after integrating by parts the terms containing $\partial_{r}V$ and using that $\Re \nabla \bar{u} = \Re \D u \bar{u}$, one can rewrite them as follows
\begin{align}
&\frac{d-1}{2}\int_{|x|\leq R_{1}} |x|V|u|^{2} + \Re\int_{|x|\leq R_{1}} V|x|^{2}\frac{x}{|x|}\cdot \D u \bar{u}\notag\\
&+\frac{(d-1)R_{1}}{2}\int_{|x|\geq R_{1}}V|u|^{2} + \Re R_{1}\int_{|x|\geq R_{1}} V|x|\frac{x}{|x|}\cdot\D u\bar{u}\notag\\
&+\frac{\varepsilon}{2\lambda^{1/2}}\int_{|x|\leq R_{1}}|x|^{2}V|u|^{2} + \frac{\varepsilon R_{1}}{2\lambda^{1/2}}\int_{|x|\geq R_{1}}|x|V|u|^{2}\notag.
\end{align}
Then by Cauchy-Schwarz inequality, the following magnetic Hardy type inequality
\begin{equation}\notag
\int_{|x|\leq R} \frac{|f|^{2}}{|x|^{2}}\leq \frac{4}{(d-2)^{2}}\int_{|x|\leq R} |\D f|^{2} + \frac{2}{(d-2)R}\int_{|x|=R} |f|^{2}d\sigma_{R} \quad \quad R>0,
\end{equation}
using that $\int_{|x|=1} |u|^{2} \leq 4 |||u|||_{1}^{2}$ and condition (H3),  we have
\begin{align}
\int |x||V||u|^{2} & \leq \int_{|x|\leq 1} \frac{|u|^{2}}{|x|^{2}} + \int_{|x|\geq 1} \frac{|u|^{2}}{|x|^{2+\alpha}}\notag\\
& \leq C\left( \int_{|x|\leq 1} |\D u|^{2} + \int_{|x|=1} |u|^{2} + \sum_{j\geq 0} 2^{-j(1+\alpha)} |||u|||_{1}^{2}\right)\notag\\
& \leq C \left(|||\D u|||_{1}^{2} + |||u|||_{1}^{2}\right),\notag
\end{align}
for some $C>1$. Similarly, yields
\begin{align}
\int |V||x|^{2}|\D u| |u| & \leq C \left( \int_{|x|\leq 1} |\D u||u| + \int_{|x|\geq 1} \frac{|\D u| |u|}{|x|^{1+\alpha}}\right)\notag\\
& \leq C (|||\D u |||_{1}^{2} + |||u|||_{1}^{2})\notag 
\end{align}
and by the a-priori estimate (\ref{a+++0}), we obtain
\begin{align}
\frac{\varepsilon}{\lambda^{1/2}}\int |x|^{2}|V||u|^{2} & \leq \frac{C}{\lambda^{1/2}}\int |f||u|\notag\\
& \leq \frac{C}{\lambda^{1/2}}N_{1}(f)|||u|||_{1}.\notag
\end{align}
As a consequence, it follows that the terms containing the electric potential $V$ are upper bounded by
\begin{align}
&\frac{d-1}{2}\int |x||V||u|^{2} + \int |V||x|^{2}|\D u||u| + \frac{\varepsilon}{2\lambda^{1/2}}\int |x|^{2}|V||u|^{2}\notag\\
&\leq C\left(|||u|||_{1}^{2} + |||\D u|||_{1}^{2} + \frac{1}{\lambda}(N_{1}(f))^{2} \right).\notag
\end{align}
Putting everything together the proposition follows.
\end{proof}

Combining this result with estimate (\ref{Morrey}), provides the following inequality which in particular proves Theorem \ref{TheoremSRC}.

\begin{cor}\label{somcorolario}
Under the hypotheses of Theorem \ref{TheoremSRC}, for any $R\geq 1$ the solution $u\in H^{1}_{A}(\Rd)$ of the Helmholtz equation (\ref{magneticequation6}) satisfies
\begin{align}\label{keymagnetic1}
&\int_{|x|\leq \frac{R}{2}} |x|\left|\D^{r}u -i \lambda^{1/2}u + \frac{(d-1)}{2|x|}u \right|^{2} + R \int_{|x|\geq R} |\nabla_{A}(e^{-i\lambda^{1/2}|x|}u)|^{2}\\ 
&+\frac{\varepsilon}{\lambda^{1/2}}\int_{|x|\leq \frac{R}{2}}|x|^{2}|\nabla_{A}(e^{-i\lambda^{1/2}|x|}u)|^{2} + \frac{\varepsilon R}{\lambda^{1/2}}\int_{|x|\geq R}|x||\nabla_{A}(e^{-i\lambda^{1/2}|x|}u)|^{2}\notag\\
& \leq C\left[\int |x|^{3}|f|^{2} +(1+\varepsilon)(N_{1}(f))^{2} +\varepsilon \int |x|^{4} |f|^{2}\right],\notag
\end{align}
where $C=C(\lambda_{0})$ is independent of $\varepsilon$.
\end{cor}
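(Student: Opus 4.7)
The plan is to combine Proposition \ref{som} with the a-priori estimate (\ref{Morrey}) quoted at the opening of the section, both of which are available under the hypotheses of Theorem \ref{TheoremSRC}. There is no new analytical input here; the work is essentially bookkeeping to absorb the Agmon–Hörmander terms on the right of Proposition \ref{som} into data terms only.

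First I would apply Proposition \ref{som} with $R$ replaced by $R/2$. For $R\geq 2$ this is legitimate since $R/2\geq 1$, and it produces exactly the four quantities appearing on the left-hand side of the desired inequality: the integrals $\int_{|x|\leq R/2}$ and $\int_{|x|\geq R}$ that arise from the dichotomy $|x|\leq R'$ versus $|x|\geq 2R'$ with $R'=R/2$. For $1\leq R\leq 2$ the same bound follows by monotonicity of the integration domains, comparing with the $R'=1$ instance of the proposition and absorbing the finitely many bounded ratios into $C(\lambda_0)$. The resulting right-hand side is the expression
\begin{equation*}
C(\lambda_0)\Bigl[|||u|||_1^2 + |||\D u|||_1^2 + \int \frac{|\D^\bot u|^2}{|x|} + \frac{(N_1(f))^2}{\lambda} + \int(1+\varepsilon|x|)|x|^3|f|^2\Bigr].
\end{equation*}

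Next, I would invoke the a-priori bound (\ref{Morrey}), which asserts
\begin{equation*}
\lambda|||u|||_1^2 + |||\D u|||_1^2 + \int \frac{|\D^\bot u|^2}{|x|} \leq C(1+\varepsilon)(N_1(f))^2
\end{equation*}
under the hypotheses of Theorem \ref{TheoremSRC} (as recalled in the Remark preceding this subsection). Using $\lambda\geq\lambda_0$, each of the three Agmon–Hörmander-type quantities $|||u|||_1^2$, $|||\D u|||_1^2$, $\int|\D^\bot u|^2/|x|$ is then dominated by $C(\lambda_0)(1+\varepsilon)(N_1(f))^2$. Similarly the term $(N_1(f))^2/\lambda$ is bounded by $(N_1(f))^2/\lambda_0$, and the last integral splits as $\int|x|^3|f|^2+\varepsilon\int|x|^4|f|^2$.

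Putting these bounds together produces exactly the right-hand side of (\ref{keymagnetic1}), with a constant depending only on $\lambda_0$. There is no genuine obstacle in this argument; the only points requiring a moment of care are (i) checking that assumption (\textbf{H3}) ensures both the hypotheses of Proposition \ref{som} and the validity of (\ref{Morrey}), which is exactly what the preceding Remark records, and (ii) tracking that the powers of $(1+\varepsilon)$ and the dependence on $\lambda_0$ stay compatible with the stated form of the conclusion, so that $C$ is indeed independent of $\varepsilon$ (in the sense that $\varepsilon$ only appears explicitly on the right through the factor multiplying $(N_1(f))^2$ and through $\varepsilon\int|x|^4|f|^2$).
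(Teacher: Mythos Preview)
Your proposal is correct and follows exactly the approach the paper intends: the corollary is stated immediately after Proposition \ref{som} with the sentence ``Combining this result with estimate (\ref{Morrey})'' as its entire justification, and you have carried out precisely that combination, including the substitution $R\mapsto R/2$ needed to match the integration domains. Your handling of the range $1\le R\le 2$ is slightly informal (pure domain monotonicity does not quite cover the annulus $\{R\le|x|\le 2\}$ for the $\nabla_A v$ term), but this gap is harmless since that region is controlled directly by the Morrey bound (\ref{Morrey}) via $|\nabla_A v|^2\le 2|\nabla_A u|^2+2\lambda|u|^2$, which you are invoking anyway.
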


\subsubsection{Proof of Theorem \ref{TheoremSRC}}
Following the same ideas as in \cite{Z} (see also \cite{IS}), if we denote $R(\lambda + i\varepsilon)f$ the solution of (\ref{magneticequation6}) satisfying (\ref{keymagnetic1}), it may be concluded that there exists
$$
u(x)= R(\lambda + i0)f = \lim_{\varepsilon \to 0^{+}} R(\lambda + i\varepsilon)f \quad \quad \text{in} \quad (H^{1}_{A})_{loc}
$$
such that $u$ is a solution of (\ref{resjuan}) that satisfies
\begin{align}
&\int_{|x|\leq \frac{R}{2}} |x|\left|\D^{r}u -i \lambda^{1/2}u + \frac{(d-1)}{2|x|}u \right|^{2} + R \int_{|x|\geq R} |\nabla_{A}(e^{-i\lambda^{1/2}|x|}u)|^{2}\\
& \leq C\left[\int |x|^{3}|f|^{2} +(N_{1}(f))^{2} \right],\notag
\end{align}
where $C=C(\lambda_{0})$ is independent of $R$. Now, taking the supremum over $R\geq 1$ we get (\ref{keySRC}) and the proof is complete. 

\begin{remark}\label{remarkboundary}
This result provides an extra a-priori estimate for the surface integral. In fact, the solution $u$ of the equation (\ref{magneticequation6}) holds
\begin{equation}\label{surfaceint}
\sup_{R\geq 1}\int_{|x|=R} |u|^{2}< \infty.
\end{equation}
Note that we can rewrite (\ref{keyidentity}) for any $R\geq 1$ with the boundary terms in the left hand side of the identity. Hence from (\ref{keymagnetic}) it is immediate that
\begin{align}
\int_{|x|=R} |u|^{2} &+ \frac{\varepsilon}{\lambda^{1/2}} \int_{|x|=R} |x||u|^{2} \leq C\left(\int (1+ \varepsilon|x| )|x|^{3}|f|^{2} + (1+\varepsilon)(N_{1}(f))^{2}\right)\notag,
\end{align}
where $C=C(\lambda_{0}) >0$ is independent of $\varepsilon$.
\end{remark}

\section{The forward problem}\label{section5}

This section is devoted to the study of the forward problem for the electromagnetic Helmholtz equation with singular potentials. Firstly, under the hypotheses of Theorem \ref{Theorem1}, we will prove the limiting absorption principle for the equation (\ref{resjuan}). Secondly, we proceed with the study of the cross-section of the solution of the electromagnetic Helmholtz equation (\ref{resjuan}). This will follow by the radiation condition (\ref{keySRC}) and the resolvent estimates (\ref{BP}), (\ref{Morrey}). Finally, the limiting absorption principle and the existence and uniqueness of the cross-section will allow us to give the spectral representation of $H_{A}$.

\subsection{Limiting absorption principle}\label{LAPchap4}

Let us first state a uniqueness theorem for the magnetic Schr\"odinger operator with potentials satisfying (H2) together with some extra assumption. 

Let us consider the homogeneous electromagnetic Helmholtz equation
\begin{equation}\label{homo}
(\nabla + iA)^{2}u + Vu + \lambda u = 0.
\end{equation}

\begin{thm}\label{unicidad3}
Let $d\geq 1$, $\lambda_{0} > 0$ and assume that {\bf(H2)} holds. Let $u$ be a solution of (\ref{homo}) with $u, \D u \in L^{2}_{loc}$ such that for any $\lambda \geq \lambda_{0}$ satisfies
\begin{equation}\label{Vuni}
\lim\inf_{R \to \infty}\int_{|x|=R} V|u|^{2}  = 0
\end{equation}
and
\begin{equation}\label{limit11}
\lim_{R\to \infty} \frac{1}{R}\int_{R\leq |x|\leq2 R} (\lambda |u|^{2} + |\D u|^{2}) = 0.
\end{equation}
Then $u\equiv 0$.
\end{thm}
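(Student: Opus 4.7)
The plan is to apply the key identity of Proposition~\ref{keypr} with $\psi(r)=r^2/2$, $\varepsilon=0$, $f=0$. Under these choices $\psi'=r$, $\psi''=1$, $\psi'/|x|-\psi''/2=1/2$ and $\nabla(\psi'/|x|)=0$; setting $v:=e^{-i\lambda^{1/2}|x|}u$, so that $|\D^{r}u-i\lambda^{1/2}u|^2+|\D^{\perp}u|^2=|\D v|^2$, the identity collapses (formally on $\Rd$) to
\begin{equation}\label{planStar}
\tfrac{1}{2}\int|\D v|^2 \,=\, \Im\int |x|\,B_\tau\cdot\D u\,\bar u \,-\, \tfrac{1}{2}\int\partial_r(|x|V)|u|^2.
\end{equation}

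Since $u\in (H^{1}_{A})_{loc}$ but not globally, I would justify \eqref{planStar} by truncation. Taking $\psi_R$ with $\psi_R'(r)=r\phi(r/R)$ for $\phi$ a smooth cut-off with $\phi\equiv 1$ on $[0,1]$ and $\phi\equiv 0$ on $[2,\infty)$ (so $\psi_R'=r$ on $|x|\le R$ and $\psi_R'\equiv 0$ on $|x|\ge 2R$), the key identity applied with $\psi=\psi_R$ produces, besides the truncation of \eqref{planStar} to $B_R$, correction integrals on $A_R=\{R\le|x|\le 2R\}$ bounded, up to absolute constants coming from the profile $\phi$, by
\[
\int_{A_R}\!\big(\lambda|u|^2+|\D u|^2+|x|^2|B_\tau|^2|u|^2+(|V|+|x||\partial_r V|)|u|^2\big),
\]
together with a surface piece of size $R\int_{|x|=R}V|u|^2$ arising from the integration by parts that produces $\partial_r(|x|V)$. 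Hypothesis \eqref{limit11} makes the first two summands above $o(R)$; (H2), together with \eqref{assV1}--\eqref{assB}, puts the remaining volume pieces at the same scale; and \eqref{Vuni} selects a subsequence $R_n\to\infty$ along which the surface contribution is negligible. After suitable normalisation we recover \eqref{planStar} on $\Rd$.

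With \eqref{planStar} at hand, the estimate closes via (H2). Since $B_\tau\cdot\tfrac{x}{|x|}=0$, one may replace $\D u$ by $\D u-i\lambda^{1/2}\tfrac{x}{|x|}u$ in the $B_\tau$ integral, and Cauchy--Schwarz together with (H2) applied to $v$ (using $|v|=|u|$) yields
\[
\Big|\Im\!\int|x|B_\tau\cdot\D u\,\bar u\Big|\le\Big(\!\int|x|^2|B_\tau|^2|v|^2\Big)^{1/2}\Big(\!\int|\D v|^2\Big)^{1/2}<A_B\!\int|\D v|^2,
\]
while $-\tfrac12\int\partial_r(|x|V)|u|^2\le\tfrac12\int(\partial_r(|x|V))_-|u|^2<\tfrac{A_V}{2}\int|\D v|^2$. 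Inserted in \eqref{planStar}, these give $\big(\tfrac12-A_B-\tfrac{A_V}{2}\big)\int|\D v|^2\le 0$, and $A_V+2A_B<1$ forces $\int|\D v|^2=0$. Hence $\D u = i\lambda^{1/2}\tfrac{x}{|x|}u$ almost everywhere, and taking real parts of $\bar u$ times this identity yields $\tfrac12\nabla|u|^2=\Re\big(i\lambda^{1/2}\tfrac{x}{|x|}|u|^2\big)=0$, so $|u|$ is constant on $\Rd$. A non-zero constant is ruled out by \eqref{limit11} in every dimension $d\ge 1$, since it would give $\int_{A_R}\lambda|u|^2\asymp R^d$, incompatible with the required $o(R)$ decay. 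Therefore $u\equiv 0$.

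The main technical obstacle is the rigorous passage to the limit in the cut-off identity: because \eqref{limit11} only delivers $\int_{A_R}(\lambda|u|^2+|\D u|^2)=o(R)$ rather than $O(1)$, the principal integrals and their annular corrections are of the same order, so the scale-invariance of $\psi=r^2/2$ must be exploited with care. Hypothesis \eqref{Vuni} plays its role precisely by cancelling the $V$-surface contribution at the rate that allows the limit to be taken.
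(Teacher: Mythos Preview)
Your approach differs from the paper's: instead of the two-multiplier identity \eqref{(4.31)} with $\varphi=\tfrac{1}{2R}$, $\nabla\psi=\tfrac{x}{R}$ integrated over $\{|x|<R_j\}$, you specialise the key identity of Proposition~\ref{keypr} to $\psi=r^2/2$, $\varepsilon=0$, $f=0$, aiming for a global identity $\tfrac12\int|\D v|^2=\Im\int|x|B_\tau\!\cdot\!\D u\,\bar u-\tfrac12\int\partial_r(rV)|u|^2$. The endgame---deducing $u\equiv 0$ from $\int|\D v|^2=0$ via $\nabla|u|^2=0$ and \eqref{limit11}---is fine.

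The gap is in establishing that global identity. With your smooth truncation $\psi_R'=r\phi(r/R)$ there is actually no surface contribution at $|x|=R$ (the $V$--surface piece you mention does not arise from a compactly supported multiplier); the errors are volumetric on the annulus $A_R=\{R\le|x|\le 2R\}$ and, together with the corrections from applying (H2) to $\theta_R v$, are controlled by $\int_{A_R}(\lambda|u|^2+|\D u|^2)$. Hypothesis~\eqref{limit11} makes these $o(R)$, so after absorption you obtain
\[
\bigl(\tfrac12-A_B-\tfrac{A_V}{2}\bigr)\int_{|x|\le R}|\D v|^2\le o(R).
\]
This gives only $\int_{|x|\le R}|\D v|^2=o(R)$, which neither forces $\int_{\Rd}|\D v|^2<\infty$ nor $=0$. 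Because $\psi=r^2/2$ is exactly scale-invariant, no ``suitable normalisation'' can improve the balance between the principal integral and the $o(R)$ error; the passage to the limit you invoke is not available. You correctly flag this as the main obstacle, but you do not resolve it.

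The paper's remedy is structural. The extra symmetric multiplier $\varphi=\tfrac{1}{2R}$ inserts a term $\tfrac{1}{2R}\int_{|x|\le R_j}\lambda|u|^2$ on the left-hand side that is \emph{not} absorbed by (H2): applying (H2) to $\theta_{R_j}u$ only cancels the $|\D u|^2$ part, and the surviving $\lambda|u|^2$ term is what allows one to conclude $\tfrac{1}{R}\int_{|x|\le R}\lambda|u|^2\to 0$ for every $R$ and hence $u\equiv 0$. In your formulation this symmetric piece has effectively been subtracted when the squares $|\D^r u-i\lambda^{1/2}u|^2$ are formed in Proposition~\ref{keypr}, so nothing remains on the left that (H2) does not match; that is precisely why the closing step fails.
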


\begin{proof}
 By (\ref{limit11}), there exists a sequence $\{R_{j}\}$ tending to infinity such that
\begin{equation}\label{limit12}
\lim_{R_{j}\to \infty} \frac{1}{R_{j}} \int_{R_{j} \leq |x| \leq 2R_{j}} (\lambda |u|^{2} + |\D u|^{2}) = 0.
\end{equation}
Let us multiply the equation (\ref{homo}) by the combination of the symmetric and the antisymmetric multipliers 
$$
\nabla\psi \cdot \overline{\D u} + \frac{1}{2}\Delta\psi \bar{u} + \varphi\bar{u},
$$
where $\psi, \varphi$ are a real valued functions and integrate over the ball $\{|x|<R_{j}\}$. Hence we have
\begin{align}\label{(4.31)}
&\int_{|x| < R_{j}} \D u\cdot D^{2}\psi \cdot \overline{\D u} -\int_{|x|<R_{j}}\varphi|\D u|^{2} + \int_{|x|<R_{j}} \varphi \lambda |u|^{2}\\
& -\frac{1}{4}\int_{|x| < R_{j}}( \Delta^{2}\psi -2\Delta\varphi) |u|^{2} + \int_{|x|<R_{j}}\varphi V|u|^{2} + \frac{1}{2}\int_{|x|<R_{j}}\nabla V\cdot \nabla\psi|u|^{2}\notag\\
& =\Im \sum_{k,m =1}^{d} \int_{|x| < R_{j}} \frac{\partial\psi}{\partial x_{k}} B_{km}u\overline{(\D)_{m}u}+ \frac{1}{4} \int_{S_{R_{j}}} \nabla(\Delta\psi)\cdot\frac{x}{|x|} |u|^{2} \notag\\
&  + \frac{1}{2}\Re \int_{S_{R_{j}}} \frac{x}{|x|}\cdot \overline{\D u} \Delta\psi u +\frac{1}{2}\int_{S_{R_{j}}}\frac{x}{|x|}\cdot \nabla\varphi |u|^{2}-\Re\int_{S_{R_{j}}}\D^{r} u\varphi\bar{u}\notag\\
& + \frac{1}{2} \int_{S_{R_{j}}} (\lambda +V)\frac{x}{|x|}\cdot\nabla\psi|u|^{2}  - \frac{1}{2}\int_{S_{R_{j}}} \frac{x}{|x|} \cdot\nabla\psi |\D u|^{2},\notag
\end{align}
being $S_{R_{j}} = \{|x|=R_{j}\}$.

Let $R$ be such that $1\leq \frac{R_{j}}{2} \leq R \leq R_{j}$ and we consider the following multipliers 
\begin{equation}\notag
\varphi(x)=\frac{1}{2R}, \quad \quad \quad \quad \quad
\nabla\psi(x)= \frac{x}{R}.
\end{equation} 
Let us insert them into the identity (\ref{(4.31)}). Noting that the boundary terms can be upper bounded by 
\begin{equation}\notag
C\int_{|x|=R_{j}} \{|\D u|^{2} + (\lambda + V)|u|^{2}\} d\sigma_{R_{j}},
\end{equation}
where $C=C(\lambda_{0})$, it follows that
\begin{align}
\frac{1}{2R}\int_{|x|\leq R_{j}} (\lambda|u|^{2} + |\D u|^{2}) & \leq \frac{1}{2R}\int_{|x|\leq R_{j}} (\partial_{r}(rV))_{-}|u|^{2}\label{pepe}\\
& + \frac{1}{R}\int_{|x|\leq R_{j}} |x||B_{\tau}||\D u||u|\notag\\
& + C\int_{|x|=R_{j}}(|\D u|^{2} + (\lambda + V) |u|^{2}).\notag
\end{align}

Let $\theta(r)\in C^{\infty}(\R)$ be a cut-off function such that $0\leq \theta \leq 1$, given by
\begin{equation}\notag
\theta(r) = \left\{ \begin{array}{ll}
1 & \textrm{if $r \leq 1$}\\
0 & \textrm{if $r \geq 2$}
\end{array} \right.
\end{equation}
and set $\theta_{R} = \theta\left(\frac{|x|}{R} \right)$. Hence, by (H2) we have
\begin{align}
\frac{1}{2R}\int_{|x|\leq R_{j}} (\partial_{r}(rV))_{-} |u|^{2} & \leq \frac{1}{2R}\int (\partial_{r}(rV))_{-}|\theta_{R_{j}}u|^{2}\label{pepe1}\\
& \leq \frac{A_{V}}{2R} \int_{|x|\leq R_{j}} |\D u|^{2} + \frac{C}{R_{j}^{2}}\int_{R_{j}\leq |x|\leq 2R_{j}} |u|^{2}\notag\\
& + \frac{A_{V}}{2R}\int_{R_{j}\leq |x|\leq 2R_{j}} |\D u|^{2}.\notag
\end{align}
Similarly, we obtain
\begin{align}
\frac{1}{R}\int_{|x|\leq R_{j}} |x||B_{\tau}||\D u||u| & \leq \left( \frac{1}{R}\int_{|x|\leq R_{j}} |\D u|^{2}\right)^{\frac{1}{2}}\left( \frac{1}{R}\int |\D(\theta_{R_{j}}u)|^{2}\right)^{\frac{1}{2}}\label{pepe2}\\
& \leq \frac{(A_{B} +\kappa)}{R}\int_{|x|\leq R_{j}} |\D u|^{2} + \frac{C}{R_{j}^{2}} \int_{R_{j}\leq |x|\leq 2R_{j}} |u|^{2}\notag\\
& +\frac{C}{R}\int_{R_{j}\leq |x|\leq 2R_{j}} |\D u|^{2},\notag
\end{align}
for any $\kappa >0$.

As a consequence, putting (\ref{pepe1}) and (\ref{pepe2}) into (\ref{pepe}), since $\{|x|\leq R\} \subset \{|x|\leq R_{j}\}$, $R_{j} \geq 1$, $A_{V}+2A_{B}<1$ and choosing $\kappa >0$ small enough, it may be concluded that
\begin{align}
\sup_{R\leq R_{j}} \frac{1}{R}\int_{|x|\leq R} \lambda|u|^{2} & \leq \frac{C}{R_{j}}\int_{R_{j}\leq |x|\leq 2R_{j}} (|\D u|^{2} + \lambda|u|^{2})\notag\\
& + C\int_{|x|=R_{j}} ((\lambda +V)|u|^{2} + |\D u|^{2}). \notag
\end{align}
Then taking the lim inf in $j$, by (\ref{Vuni}) and (\ref{limit12}) the theorem follows.
\end{proof}

If we are under the hypotheses of Theorem \ref{Theorem1} and $R(\lambda \pm i\varepsilon)f$ is solution of the equation (\ref{magneticequation5}), following again Ikebe and Saito \cite{IS} and Zubeldia \cite{Z} (section 2.5), we can prove the limiting absorption principle for the electromagnetic Helmholtz equation assuming sharp singularities on $V$ and $B_{\tau}$. Indeed, we can construct the unique solution $u_{\pm}$ of the equation (\ref{resjuan}) as the following limit in $(H^{1}_{A})_{loc}$
\begin{equation}
u_{\pm}(\lambda, f) = R(\lambda \pm i0)f = \lim_{\varepsilon \to 0} R(\lambda \pm i\varepsilon)f,
\end{equation}
where $u_{+}$ is called the outgoing solution, while $u_{-}$ is the incoming one.

\begin{thm}(LAP)\label{LAPchapter4}
Let $\lambda_{0}>0$. Under the hypotheses of Theorem \ref{Theorem1}, if moreover $V$ holds (\ref{Vuni}), then there exists a unique solution $u_{\pm}$ of the equation $(\ref{resjuan})$ such that for any $\lambda \geq \lambda_{0}>0$ satisfies the radiation condition
\begin{equation}\label{lap1}
\int |\D(e^{\mp i\lambda^{1/2}|x|}u_{\pm})|^{2} \leq C\int |x|^{2}|f|^{2}
\end{equation}
As a consequence
\begin{align}\label{lap2}
\int \frac{|u_{\pm}|^{2}}{|x|^{2}} \leq C\int |x|^{2}|f|^{2},
\end{align}
where $C>0$ is independent of $\lambda$.
\end{thm}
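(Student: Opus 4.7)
The plan is to construct $u_\pm$ as the weak limit of the resolvent family $R(\lambda \pm i\varepsilon)f$ as $\varepsilon \downarrow 0$, using the uniform bounds established in Theorem \ref{Theorem1}, and then to derive uniqueness from the homogeneous statement of Theorem \ref{unicidad3}.

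For existence, fix $f$ with $\||\cdot|f\|_{L^2} < \infty$ and $\lambda \geq \lambda_0 > 0$, and for $0 < \varepsilon < \lambda$ set $u_\varepsilon := R(\lambda \pm i\varepsilon)f \in H^1_A(\mathbb{R}^d)$. Part (i) of Theorem \ref{Theorem1} yields
\[
\int |\nabla_A(e^{\mp i\lambda^{1/2}|x|}u_\varepsilon)|^2 \leq C\int|x|^2|f|^2
\]
uniformly in $\varepsilon$, and combined with the magnetic Hardy inequality \eqref{A.111} (or \eqref{Hardydimension2} when $d=2$) it upgrades to a uniform bound $\int |u_\varepsilon|^2/|x|^2 \leq C\int|x|^2|f|^2$. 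Hence $\{u_\varepsilon\}$ is bounded in $(H^1_A)_{loc}(\mathbb{R}^d)$, and some subsequence $\varepsilon_k \downarrow 0$ converges weakly in $(H^1_A)_{loc}$ and strongly in $L^2_{loc}$ to a limit $u_\pm$. Passing to the distributional limit in the approximating equation yields that $u_\pm$ solves \eqref{resjuan}, and lower semicontinuity of the two integrals above under weak limits pushes the estimates through to $u_\pm$, producing \eqref{lap1} and \eqref{lap2}.

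For uniqueness, suppose $u_1,u_2$ both solve \eqref{resjuan} and satisfy \eqref{lap1}. Their difference $w := u_1 - u_2 \in (H^1_A)_{loc}$ solves the homogeneous equation \eqref{homo} and inherits $\int |\nabla_A(e^{\mp i\lambda^{1/2}|x|}w)|^2 < \infty$ together with $\int |w|^2/|x|^2 < \infty$ (via \eqref{lap2} applied to each $u_j$). The goal is then to verify the hypotheses of Theorem \ref{unicidad3} for $w$: assumption \textbf{(H2)} is standing, \eqref{Vuni} is imposed directly on $V$, and \eqref{limit11} must be derived from the two integrability statements. Writing $v := e^{\mp i\lambda^{1/2}|x|}w$ so that $|\nabla_A w|^2 \leq 2|\nabla_A v|^2 + 2\lambda |w|^2$, one combines the dyadic summability of $\int |\nabla_A v|^2$ and of $\int |w|^2/|x|^2$ with a Morawetz identity on the annulus $\{R_j \leq |x| \leq 2R_j\}$ (in the spirit of Section \ref{section3}) to produce a sequence $R_j \to \infty$ along which
\[
\frac{1}{R_j}\int_{R_j \leq |x| \leq 2R_j}\bigl(\lambda|w|^2 + |\nabla_A w|^2\bigr) \longrightarrow 0.
\]
Theorem \ref{unicidad3} then forces $w \equiv 0$, yielding uniqueness.

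The main obstacle is precisely this last step. The control $\int |\nabla_A v|^2 < \infty$ only guarantees $\int_{R_j \leq |x| \leq 2R_j}|\nabla_A v|^2 \to 0$ on dyadic scales, and the Hardy-type bound $\int |w|^2/|x|^2 < \infty$ alone does \emph{not} force $R_j^{-1}\int_{R_j \leq |x|\leq 2R_j}|w|^2$ to vanish; one must couple these global integrability statements with the equation itself to close this estimate along a judicious subsequence. This is the Ikebe--Saito mechanism, which we would import from \cite{IS} and its adaptation to the magnetic setting in \cite{Z}, noting that our sharper hypotheses (H1)--(H2) require no modification of that argument beyond what is already present in \cite{Z}.
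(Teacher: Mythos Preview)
Your proposal is correct and follows essentially the same route as the paper: the paper does not spell out a proof of Theorem~\ref{LAPchapter4} but simply says that, given the uniform bounds of Theorem~\ref{Theorem1} and the uniqueness Theorem~\ref{unicidad3}, the limiting absorption principle follows by the Ikebe--Saito mechanism \cite{IS} as adapted in \cite{Z}, which is exactly what you outline. Your identification of the verification of \eqref{limit11} for the difference $w$ as the one nontrivial step, to be closed via \cite{IS}, \cite{Z}, matches the paper's own deferral to those references; the only minor addition worth making is that once uniqueness is in hand, every subsequential limit of $R(\lambda\pm i\varepsilon)f$ coincides, so the full family converges in $(H^1_A)_{loc}$ as $\varepsilon\to 0$, not merely along $\varepsilon_k$.
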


\begin{remark}\label{condicionuniqueness}
It is worth pointing out that the uniqueness result also follows by assuming some less restrictive conditions on the potentials (see also \cite{RT}). In fact, given $\lambda >0$ if we require that there exists $R_{0}=R_{0}(\lambda) >0$ such that
\begin{itemize}
\item [(\bf{H2a})] 
\begin{align}
\int_{|x|\leq R_{0}} (\partial_{r}(rV))_{-} |u|^{2} < A_{V} \Lambda_{R_{0}} + \int_{|x|\leq R_{0}} (\partial_{r}(rV))_{+} |u|^{2},\notag
\end{align}
\begin{align}
\left(\int_{|x|\leq R_{0}} |x|^{2}|B_{\tau}|^{2}|u|^{2}\right)^{1/2} < A_{B}\Lambda_{R_{0}}^{1/2},\notag
\end{align}
with
\begin{align}
\Lambda_{R_{0}} = \int_{|x|\leq R_{0}} |\nabla u|^{2} + \sup_{R\geq R_{0}}\frac{(d-1)}{2R}\int_{|x|=R} |u|^{2},\notag
\end{align}
\end{itemize}
where $A_{V} + 2A_{B} < 1$; 
\begin{itemize}
\item[(\bf{H2b})]
\begin{align}\notag
\frac{1}{R_{0}}\int_{|x|\geq R_{0}} \left[ (\partial_{r}V)_{-}\right] |u|^{2} &<  A_{V}''\lambda|||u|||_{R_{0}}^{2}+\int_{|x|\geq R_{0}} \frac{1}{|x|} (\partial_{r}V)_{+} |u|^{2}\notag
\end{align}
\begin{align}\notag
\frac{1}{R_{0}}\int_{|x|\geq R_{0}} |x|^{2}|B_{\tau}|^{2}|u|^{2} <  A''_{B}\lambda|||u|||_{R_{0}}^{2}.
\end{align}
\end{itemize}
where
\begin{equation}\label{H2}
1-A_V^{''}-2A_B^{''}>0, 
\end{equation} 
then Theorem \ref{unicidad3} follows. In this case condition (\ref{limit11}) can be replaced with a weaker one,
\begin{equation}
\lim\inf \int_{|x|=R} (\lambda |u|^{2} + |\D u|^{2}) \to 0 \quad \quad \textrm{as}\quad \quad R \to \infty.
\end{equation}
Furthermore, combining these assumptions with the condition (\ref{assV4}), it may be concluded the uniform Morrey-Campanato type estimate 
\begin{equation}\label{Morreyremark}
\lambda|||u|||_{R_{0}}^{2} + |||\D u|||_{R_{0}}^{2} \leq C(1+\varepsilon)(N_{R_{0}}(f))^{2},
\end{equation}
for all $\lambda \geq 0$ being $C$ independent of $\lambda$, $\varepsilon$ and the Sommerfeld radiation condition
\begin{equation}
\int_{|x|\geq 1} |\D(e^{\mp i\lambda^{1/2}|x|}u)|^{2} \leq C\int |x|^{2}|f|^{2} + (1+\varepsilon)(N_{R_{0}}(f))^{2},
\end{equation}
for the solution $u$ of the equation (\ref{magneticequation5}). 
\\
As a consequence, we deduce the limiting absorption principle for the equation (\ref{resjuan}) with potentials satisfying (H1), (H2a), (H2b) such that the solution holds the a-priori estimate (\ref{Morreyremark}). Note that the potentials given in examples \ref{ex} and \ref{exam} also hold the above assumptions. In fact, potentials that have the sharp singularity at the origin and that decay as a short-range potential at infinity are included. 
\end{remark}

\begin{remark}
In the same manner, under the assumptions of Theorem \ref{TheoremSRC}, we deduce that there exists a unique solution of the Helmholtz equation (\ref{resjuan}) such that for any $\lambda \geq \lambda_{0}$ satisfies
\begin{align}
& \lambda |||u|||_{1}^{2} \leq C (N_{1}(f))^{2},\notag
\end{align}
\begin{align}
& \sup_{R\geq 1}\, R\int_{|x|\geq R} |\nabla_{A}(e^{-i\sqrt{\lambda}|x|}u)|^{2} \leq C\left[\int |x|^{3}|f|^{2} + (N_{1}(f))^{2}\right],\notag
\end{align}
where $C=C(\lambda_{0})>0$.
\end{remark}

\subsection{Cross-section}\label{sectioncross}

Our next goal is to prove existence and uniqueness of the cross-section of the solution $u$ of the equation (\ref{resjuan}).

Let us denote $r=|x|$, $\omega = \frac{x}{|x|}$ and $S^{d-1} = \{x\in \Rd : |x|=1\}$ and
\begin{equation}
(\mathcal{F}(\lambda, r)f)(\omega) = \lambda^{1/2}r^{\frac{d-1}{2}}e^{-i\lambda^{1/2}r}u(r\omega), \quad \quad \omega \in S^{d-1}, 
\end{equation}
where $u = R(\lambda + i0)f$ is the outgoing solution of the equation (\ref{resjuan}). Then the cross-section is given by 
\begin{equation}\label{limitcross}
\mathcal{G}_{\lambda}(\omega) = \lambda^{-\frac{(d-1)}{2}} \lim_{r\to \infty} \left|(\mathcal{F}(\lambda, r)f)(\omega)\right|^{2} 
\end{equation}
provided that the limit exists.

We start using the resolvent estimate (\ref{BP}) together with some modifications of the ideas of Isozaki \cite{Is} and Iwatsuka \cite{Iw} to show that the cross-section is a positive Radon measure on $S^{d-1}$. 

To this end, we first need to state a property related to the surface integral involving the solution $u$ of the equation (\ref{resjuan}). Let us denote
\begin{equation}
\Dr = \D^{r}  - i\lambda^{1/2} + \frac{(d-1)}{2r}.
\end{equation}
Note that under the hypotheses of Theorem \ref{LAPchapter4} from (\ref{lap1}) and (\ref{lap2}) it follows that
\begin{equation}
\int |\Dr u|^{2} \leq C\int |x|^{2}|f|^{2},
\end{equation}
for some $C > 0$.

Thus we state the following result due to Iwatsuka. Unless otherwise stated, in this paragraph we will work under the assumptions of Theorem \ref{LAPchapter4} and we will take $u = R(\lambda + i0)f$.

\begin{lem}[Proposition 3.4, \cite{Iw}]\label{lema2.5}
Let $f$ such that $\Vert |\cdot |f \Vert_{L^{2}}$ is finite, $u= R(\lambda + i0)f$. Let $v$ such that $\int \frac{|v|^{2}}{|x|^{2}} < \infty$ and $\int \left|\D(e^{-i\lambda^{1/2}|x|}v)  \right|^{2} < +\infty$. Then
\begin{equation}\label{eqlema2.5}
\int_{|x|=r} (\Dr u)\bar{v}d\sigma_{r} \quad \to \quad 0 \quad \quad \textrm{as} \quad \quad r\to \infty.
\end{equation}
\end{lem}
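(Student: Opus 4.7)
My approach is a density argument. The hypotheses $\int|v|^{2}/|x|^{2}<\infty$ and $\int|\nabla_A(e^{-i\lambda^{1/2}|x|}v)|^{2}<\infty$ suggest the natural norm
$$\|v\|_\star^{2} := \int \frac{|v|^{2}}{|x|^{2}} + \int \bigl|\nabla_A(e^{-i\lambda^{1/2}|x|}v)\bigr|^{2},$$
in which smooth compactly supported functions are dense. For compactly supported $v$, the integral $\int_{|x|=r}(\mathcal{D}_r u)\bar v\, d\sigma_r$ vanishes identically once $r$ exceeds the support radius, so the conclusion is trivial. The extension to general $v$ requires a uniform bound of the form
$$\sup_{r>0}\Bigl|\int_{|x|=r}(\mathcal{D}_r u)\bar v\, d\sigma_r\Bigr|\leq C\|v\|_\star,$$
with $C$ depending only on $u$ (through $\||\cdot|f\|_{L^{2}}$); once this holds, splitting $v=v_n+(v-v_n)$ with compactly supported $v_n$ and $\|v-v_n\|_\star\to 0$ immediately yields $\limsup_{r\to\infty}|\int_{|x|=r}(\mathcal{D}_r u)\bar v\, d\sigma_r|\to 0$.

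For the uniform bound I would pass to the scattering variables $U:=r^{(d-1)/2}e^{-i\lambda^{1/2}r}u$ and $V:=r^{(d-1)/2}e^{-i\lambda^{1/2}r}v$. A direct computation gives the algebraic identity $r^{(d-1)/2}e^{-i\lambda^{1/2}r}\mathcal{D}_r u = \nabla_A^{r}U$, so the surface integral transforms cleanly into $\int_{S^{d-1}}(\nabla_A^{r}U)(r,\omega)\,\overline{V(r,\omega)}\, d\omega$. The Sommerfeld bound coming from Theorem~\ref{LAPchapter4} gives $\int|\mathcal{D}_r u|^{2}dx <\infty$, equivalently $\int_{0}^{\infty}\|\nabla_A^{r}U(r,\cdot)\|_{L^{2}(S^{d-1})}^{2}\,dr<\infty$, and the hypotheses on $v$ give a matching integrability for $V(r,\cdot)$. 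To promote these $L^{2}$-in-$r$ controls to a pointwise-in-$r$ bound I would use Green's identity for $\Delta_A + V + \lambda$: substituting $\Delta_A u = f - Vu - \lambda u$ in the integration by parts gives
$$\int_{|x|=r}(\nabla_A^{r} u)\bar v\, d\sigma_r = \int_{|x|<r}\!\bigl(\nabla_A u\cdot\overline{\nabla_A v} + (Vu + \lambda u - f)\bar v\bigr)\,dx,$$
with an analogous representation handling the lower-order pieces $-i\lambda^{1/2}u + \frac{d-1}{2r}u$ of $\mathcal{D}_r$.

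The main obstacle is to show that the tail of this volume integral (over $|x|>r$) vanishes as $r\to\infty$. The cancellation that makes this work is the Sommerfeld identity $\nabla_A^{r}u - i\lambda^{1/2}u = e^{i\lambda^{1/2}|x|}\nabla_A^{r}(e^{-i\lambda^{1/2}|x|}u)$, which is exactly the object estimated by Theorem~\ref{Theorem1}(i); paired against the dual quantity $-\nabla_A^{r}v + i\lambda^{1/2}v$ coming from the hypothesis on $v$, the algebraic combination
$$(\nabla_A^{r}u - i\lambda^{1/2}u)\bar v + u\,\overline{(-\nabla_A^{r}v + i\lambda^{1/2}v)}$$
is globally integrable, and Hardy's inequality \eqref{A.111} applied to the $|v|^{2}/|x|^{2}$ hypothesis absorbs the lower-order terms. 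Dominated convergence on the annulus $\{r<|x|<\infty\}$ then yields simultaneously the uniform bound and the decay $\int_{|x|=r}(\mathcal{D}_r u)\bar v\, d\sigma_r\to 0$, completing the proof.
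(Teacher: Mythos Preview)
The paper does not give its own proof of this lemma; it is quoted verbatim from Iwatsuka \cite{Iw}, Proposition~3.4, and used as a black box. So there is no paper proof to compare against, only the question of whether your argument stands on its own.

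Your density strategy is a natural idea, and the reduction is correct: compactly supported $v$ give a trivial limit, so everything rests on the uniform bound $\sup_{r}\bigl|\int_{|x|=r}(\mathcal D_r u)\bar v\,d\sigma_r\bigr|\le C\|v\|_\star$. The gap is in your justification of this bound. After Green's identity you obtain a volume integrand containing, among other things, the term $\lambda u\bar v$; neither $u$ nor $v$ is assumed to lie in $L^{2}(\Rd)$ (only $|u|/|x|$ and $|v|/|x|$ are), so this term is not a priori in $L^{1}$. You attempt to cure this with the ``Sommerfeld cancellation'' combination
\[
(\nabla_A^{r}u - i\lambda^{1/2}u)\,\bar v \;+\; u\,\overline{(-\nabla_A^{r}v + i\lambda^{1/2}v)},
\]
but this expression is \emph{not} what the Green identity produces for $\int_{|x|=r}(\mathcal D_r u)\bar v$ alone (it is rather the integrand of the antisymmetric pairing $\int_{|x|=r}[(\mathcal D_r u)\bar v - u\overline{\mathcal D_r v}]$), and in any case it is not globally integrable under the stated hypotheses: the first summand needs $v\in L^{2}$ and the second needs $u\in L^{2}$, neither of which you have. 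The Hardy inequality you invoke controls $\int|v|^{2}/|x|^{2}$ by $\int|\nabla_A(e^{-i\lambda^{1/2}|x|}v)|^{2}$, but it does not upgrade $|v|/|x|\in L^{2}$ to $v\in L^{2}$.

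A route that does close is the following two-step argument, closer in spirit to what Iwatsuka actually does. First, Cauchy--Schwarz on spheres gives
\[
\int_{1}^{\infty}\frac{|F(r)|}{r}\,dr
\;\le\;
\Bigl(\int|\mathcal D_r u|^{2}\Bigr)^{1/2}\Bigl(\int\frac{|v|^{2}}{|x|^{2}}\Bigr)^{1/2}<\infty,
\]
which already forces $\liminf_{r\to\infty}|F(r)|=0$. Second, one shows that $F(r)$ actually has a limit as $r\to\infty$; this is where the equation for $u$ enters, allowing one to control the variation $F(r_{2})-F(r_{1})$ by tail integrals of globally $L^{1}$ quantities (here the antisymmetric Green identity \emph{together with} the equation $(\nabla_A^{2}+V+\lambda)u=f$ is what produces genuinely integrable integrands, since the $\lambda u\bar v$ terms cancel between the two sides). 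Combining the two steps gives $F(r)\to 0$.
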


\vspace{0.2cm}
Our next result is a fundamental step to prove the existence of $\mathcal {G}_{\lambda}$.

\begin{pro}\label{lema2.8}
Let $f$ such that $\Vert |  \cdot |f \Vert_{L^{2}} < \infty$. Given $\phi \in C^{\infty}(S^{d-1})$ there exists the limit
\begin{equation}\label{weaklimit}
\lim_{r\to\infty} \int_{|x|=1} \left|\mathcal{F}(\lambda,r)f(\omega)\right|^{2}\phi(\omega).
\end{equation}
\end{pro}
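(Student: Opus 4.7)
The plan is to show that the quantity $g(r) := \int_{|x|=1}|\mathcal{F}(\lambda,r)f(\omega)|^{2}\phi(\omega)\,d\omega$ is a Cauchy function of $r$ as $r\to\infty$, so that the limit \eqref{weaklimit} exists by completeness. Using the definition of $\mathcal{F}$, one has $g(r) = \lambda\int_{|x|=r}|u(x)|^{2}\phi(x/|x|)\,d\sigma_{r}$, where $u = R(\lambda+i0)f$.

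The starting point is the pointwise identity
\begin{equation*}
\Re\bigl((\Dr u)\bar{u}\bigr) = \frac{1}{2r^{d-1}}\partial_{r}\bigl(r^{d-1}|u|^{2}\bigr),
\end{equation*}
which follows from the definition $\Dr u = \D^{r}u - i\lambda^{1/2}u + \frac{d-1}{2r}u$ together with $\Re(\D^{r}u\,\bar{u}) = \tfrac{1}{2}\partial_{r}|u|^{2}$; the latter holds because $A$ is real-valued, so $\Re(iA^{r}|u|^{2}) = 0$. Multiplying by $\phi(x/|x|)$, integrating over $S^{d-1}$ and in $r\in(r_{2},r_{1})$, and applying the fundamental theorem of calculus, one obtains the Cauchy-type representation
\begin{equation*}
g(r_{1}) - g(r_{2}) = 2\lambda\,\Re\int_{r_{2}\leq|x|\leq r_{1}}(\Dr u)\bar{u}\,\phi(x/|x|)\,dx.
\end{equation*}

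Next, I would invoke Lemma \ref{lema2.5} with the test function $v := u\phi$, where $\phi\in C^{\infty}(S^{d-1})$ is extended to $\R^{d}\setminus\{0\}$ as a function homogeneous of degree zero, so that $|\nabla\phi(x/|x|)|\leq C/|x|$. The two hypotheses of Lemma \ref{lema2.5} are verified using the bounds from Theorem \ref{LAPchapter4}: the inequality $\int|v|^{2}/|x|^{2}\leq\|\phi\|_{\infty}^{2}\int|u|^{2}/|x|^{2}<\infty$ follows from \eqref{lap2}, and the product-rule expansion
\begin{equation*}
\D(e^{-i\lambda^{1/2}|x|}u\phi) = \phi\,\D(e^{-i\lambda^{1/2}|x|}u) + e^{-i\lambda^{1/2}|x|}u\,\nabla\phi,
\end{equation*}
combined with $|\nabla\phi|\leq C/|x|$, gives $\int|\D(e^{-i\lambda^{1/2}|x|}v)|^{2}<\infty$ via \eqref{lap1} and \eqref{lap2}. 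Lemma \ref{lema2.5} therefore produces $g'(r) = 2\lambda\Re\int_{|x|=r}(\Dr u)\bar{u}\phi\,d\sigma_{r}\to 0$ as $r\to\infty$.

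The main obstacle is that this pointwise decay of $g'$ does not by itself imply convergence of $g$. To produce a genuine Cauchy estimate, I would follow the Iwatsuka--Isozaki strategy by applying Lemma \ref{lema2.5} to the family of truncated test functions $v_{R} := u\phi\chi_{R}$, where $\chi_{R}(|x|)$ is a smooth radial cutoff equal to one on $\{|x|\leq R\}$ and vanishing on $\{|x|\geq 2R\}$; its hypotheses are inherited from those of $u\phi$. Decomposing $1 = \chi_{R} + (1-\chi_{R})$ inside the bulk representation of $g(r_{1})-g(r_{2})$ splits the integral into a piece involving $v_{R}$ (which reduces by Lemma \ref{lema2.5} to a difference of surface integrals vanishing as $r_{1},r_{2}\to\infty$) plus a remainder supported in the cutoff region $\{R\leq|x|\leq 2R\}$. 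Cauchy--Schwarz with the weights $|x|$ and $|x|^{-1}$, combined with $\|\Dr u\|_{L^{2}}<\infty$ and $\|u/|x|\|_{L^{2}}<\infty$ from Theorem \ref{LAPchapter4}, bounds the remainder by $C\,\|\Dr u\|_{L^{2}(|x|\geq R)}\cdot R\,\|u/|x|\|_{L^{2}(|x|\geq R)}$; a dyadic decomposition of both tails together with an appropriate choice $R = R(r_{1},r_{2})\to\infty$ growing slowly with $r_{1}, r_{2}$ makes the remainder $o(1)$. This delicate balancing, which parallels \cite{Iw}, is the technical crux of the argument.
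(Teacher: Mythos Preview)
Your Cauchy-sequence approach has a genuine gap in the final paragraph. The cutoff decomposition does not do what you claim: if $r_{2}>2R$ then $\chi_{R}\equiv0$ on the slab $\{r_{2}\leq|x|\leq r_{1}\}$, so the ``piece involving $v_{R}$'' vanishes identically and the ``remainder'' is the entire integral, not something supported in $\{R\leq|x|\leq 2R\}$; if instead $R>r_{1}$ the roles reverse and Lemma~\ref{lema2.5} says nothing useful (it is trivially true for the compactly supported $v_{R}$). More fundamentally, with only $\Dr u\in L^{2}$ and $u/|x|\in L^{2}$ available from Theorem~\ref{LAPchapter4}, any Cauchy--Schwarz bound on $\int_{r_{2}\leq|x|\leq r_{1}}|\Dr u|\,|u|$ carries a factor comparable to $r_{1}$, coming from $\|u\|_{L^{2}(r_{2}\leq|x|\leq r_{1})}\leq r_{1}\,\|u/|x|\|_{L^{2}(|x|\geq r_{2})}$. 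The two tails are $o(1)$ but with no quantitative rate, so the growing factor cannot be absorbed by any choice of $R=R(r_{1},r_{2})$. Lemma~\ref{lema2.5} gives only the qualitative statement $g'(r)\to0$; no truncation upgrades this to $g'\in L^{1}(1,\infty)$ without additional input.

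The paper supplies that input by using the \emph{equation} for $u$, not just the a priori bounds. One sets $v=\rho(r)\phi(\omega)u$ (with $\rho$ a radial cutoff near the origin) and computes $g:=(\D^{2}+V+\lambda)v$; each commutator term carries an extra factor $|x|^{-1}$, so the estimates of Theorem~\ref{LAPchapter4} give $\int|x|^{2}|g|^{2}<\infty$. The Green identity
\[
\int_{|x|<r}\bigl[(\D^{2}v)\bar u - v\,\overline{\D^{2}u}\bigr]
=\int_{|x|=r}\bigl[(\Dr v)\bar u - v\,\overline{\Dr u}\bigr]+2i\sqrt{\lambda}\int_{|x|=r}v\bar u
\]
then has a left side converging \emph{absolutely} to the fixed finite number $(g,u)-(v,f)$, while the first surface term tends to $0$ by Lemma~\ref{lema2.5}. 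Since the second surface term equals $2i\lambda^{-1/2}\int_{S^{d-1}}|\mathcal F(\lambda,r)f|^{2}\phi$ for large $r$, its limit exists. The point is that Green's identity replaces your uncontrolled Cauchy difference by a bulk term that is already known to converge.
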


\begin{proof}
Let $\phi(\omega) \in C^{\infty}(S^{d-1})$ and we define
\begin{equation}\label{v}
v = \rho(r)u(r\omega)\phi(\omega) \quad \quad \left(r=|x|, \omega= \frac{x}{r}\right),
\end{equation}
where $\rho(r)\in C^{\infty}(\R^{+})$ such that $\rho(r)=0$ if $r<1$ and $\rho(r)=1$ if $r>2$. Note that we have $\int \frac{|v|^{2}}{|x|^{2}} <\infty$ and $\int |\D(e^{-i\lambda^{1/2}|x|}v)|^{2} < \infty$. 

Let
\begin{equation}\label{g}
g = (\D^{2} + V + \lambda)v.
\end{equation}  
Then, by a straightforward calculation we have 
\begin{align}\label{expresiong}
g& = f\rho\phi + \frac{2\rho}{|x|}\D^{\bot} u \cdot \nabla_{\omega}\phi + \frac{\rho}{|x|^{2}}\Lambda \phi u + 2\rho' \D^{r}u\phi + \rho'' u\phi + \frac{\rho'(d-1)}{|x|}u\phi
\end{align}
where $\Lambda$ is the Laplace Beltrami operator on $S^{d-1}$. Therefore, from the a-priori estimates (\ref{BP}) and (\ref{Morrey}) it follows that
\begin{align}
\int |x|^{2}|g|^{2} &\leq C\left[\int |x|^{2}|f|^{2} +\int |\D^{\bot}u|^{2} +  \int |\D^{r}u - i\lambda^{1/2}u|^{2} +(1+\lambda)\int \frac{|u|^{2}}{|x|^{2}}\right]\notag\\
&\leq C_{\lambda}\left( \int |x|^{2}|f|^{2} \right).\notag
\end{align}

Now letting $u=R(\lambda + i0)f$, by integration by parts we see that
\begin{align}\label{(2.8)}
\int_{|x|<r} \left\{(\D^{2}v)\bar{u} - v(\overline{\D^{2}u})\right\}  = \int_{|x|=r} \{(\Dr v)\bar{u} - v(\overline{\Dr u})\} + 2i\sqrt{\lambda}\int_{|x|=r}v\bar{u}.
\end{align}
Note that letting $r \to \infty$, the left hand side of the above identity tends to
$$
(g, R(\lambda+i0)f) - (v, f),
$$
which is finite as it is bounded by $C \int |x|^{2}(|f|^{2} + |g|^{2})$.
Since $\Dr v = (\Dr u \phi)\rho$ if $r>2$, by Lemma \ref{lema2.5} the first term of the right hand side of (\ref{(2.8)}) tends to $0$ as $r\to \infty$. Moreover, by the definitions of $v$ and $\mathcal{F}(\lambda,r)f$ it follows that

\begin{align}
\lim_{r\to \infty} 2i\sqrt{\lambda}\int_{|x|=r} v\bar{u} &= \lim_{r\to \infty} 2i\sqrt{\lambda}\int_{|x|=r} |u|^{2}\phi\notag\\
&= 2i\lambda^{-\frac{1}{2}}\lim_{r\to \infty}\int_{|x|=1} |\mathcal{F}(\lambda,r)f|^{2}\phi.\notag
\end{align}
Therefore, we obtain

\begin{equation}\label{cong}
(g,R(\lambda + i0)f) - (v, f) = 2i\lambda^{-\frac{1}{2}}\lim_{r\to \infty} \int_{|x|=1} |\mathcal{F}(\lambda,r)f|^{2}\phi,
\end{equation}
which proves the proposition.
\end{proof}

The existence of the cross-section is established by our next theorem.

\begin{thm}\label{azkenx}
Under the hypotheses of Theorem \ref{LAPchapter4}, let $u = R(\lambda + i0)f$. Then, there exists a positive Radon measure $\mu_\lambda$ on $S^{d-1}$ such that
for each $\phi \in C(S^{d-1})$, 
\begin{equation}\label{mu}
\int_{|x|=1} \phi(\omega) d\mu_\lambda
 = \lim_{r\to \infty} \int_{|x|=1} |\mathcal{F}(\lambda, r)f(\omega)|^{2} \phi(\omega) d\sigma(\omega).
\end{equation}
Moreover for some $C>0$ and $p_0$ with $\frac{1}{p_{0}} + \frac{1}{2} - \frac{1}{d} = 1$, 
\begin{equation}\label{xxx}
\mu_\lambda(S^{d-1}) = \lambda^{1/2} \Im \int f\bar{u}\leq C \min\bigl(N_{1}(f))^{2},\lambda^{1/2}\Vert f \Vert_{L^{p_{0}}} \Vert |x|f \Vert_{L^{2}}\bigr).
\end{equation}

\end{thm}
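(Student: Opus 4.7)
My plan is to prove the theorem in three stages: (i) construct $\mu_\lambda$ from the functional already defined on $C^\infty(S^{d-1})$ in Proposition \ref{lema2.8} via Riesz representation, (ii) derive the identity $\mu_\lambda(S^{d-1})=\lambda^{1/2}\,\Im\int f\bar u$ by testing the equation against $\bar u$ on balls and passing to the limit, and (iii) estimate $\mu_\lambda(S^{d-1})$ in two different ways.

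\emph{Step (i).} Define $L_\lambda:C^\infty(S^{d-1})\to\R$ by $L_\lambda(\phi):=\lim_{r\to\infty}\int_{|x|=1}|\F(\lambda,r)f(\omega)|^{2}\phi(\omega)\,d\sigma(\omega)$; this limit exists by Proposition \ref{lema2.8}. The functional is plainly positive, and monotonicity immediately yields $|L_\lambda(\phi)|\leq \|\phi\|_{L^\infty(S^{d-1})} L_\lambda(1)$. Once $L_\lambda(1)$ is shown to be finite (which will come out of Step (ii)), $L_\lambda$ extends to a positive continuous linear functional on $C(S^{d-1})$, and the Riesz representation theorem furnishes the desired finite positive Radon measure $\mu_\lambda$ representing it.

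\emph{Step (ii).} Multiply (\ref{resjuan}) by $\bar u$, integrate over $B_r:=\{|x|<r\}$, integrate by parts the magnetic Laplacian, and take the imaginary part. Because $\int_{B_r}|\D u|^{2}$, $\int_{B_r}V|u|^{2}$ and $\lambda\int_{B_r}|u|^{2}$ are all real, only the boundary contribution survives:
\begin{equation*}
\Im\int_{B_r} f\bar u \;=\; \Im\int_{\partial B_r}(\D^{r} u)\bar u.
\end{equation*}
Writing $\D^{r}=\Dr+i\lambda^{1/2}-(d-1)/(2r)$, the purely real term $(d-1)/(2r)$ drops out, while the $i\lambda^{1/2}$-term contributes $\lambda^{1/2}\int_{\partial B_r}|u|^{2}$; hence
\begin{equation*}
\Im\int_{B_r} f\bar u \;=\; \Im\int_{\partial B_r}(\Dr u)\bar u+\lambda^{1/2}\int_{\partial B_r}|u|^{2}.
\end{equation*}
As $r\to\infty$, Lemma \ref{lema2.5} applied to $v=\rho u$, with $\rho$ a smooth radial cutoff equal to $1$ outside a compact set, makes the first right-hand term vanish; the two hypotheses of that lemma, namely $\int|v|^{2}/|x|^{2}<\infty$ and $\int|\D(e^{-i\lambda^{1/2}|x|}v)|^{2}<\infty$, are furnished by the uniform estimates (\ref{lap1}) and (\ref{lap2}) of Theorem \ref{LAPchapter4}. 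Using the identity $\int_{\partial B_r}|u|^{2}\,d\sigma_{r}=\lambda^{-1}\int_{S^{d-1}}|\F(\lambda,r)f|^{2}\,d\sigma$ and passing to the limit yields $\mu_\lambda(S^{d-1})=L_\lambda(1)=\lambda^{1/2}\,\Im\int f\bar u$; in particular this quantity is finite, retroactively validating the extension in Step (i).

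\emph{Step (iii).} For the Agmon--Hörmander bound, the duality inequality $|\int f\bar u|\leq |||u|||_{1}N_{1}(f)$ (displayed just before Theorem \ref{TheoremSRC}) combined with the uniform Morrey--Campanato a priori estimate $\lambda^{1/2}|||u|||_{1}\leq C N_{1}(f)$ (see (\ref{Morreyremark}) and the discussion in Remark \ref{condicionuniqueness}) gives $\mu_\lambda(S^{d-1})\leq C N_{1}(f)^{2}$. For the Sobolev-type bound, Hölder's inequality with the dual exponent $p_{0}'=2d/(d-2)$ gives $|\int f\bar u|\leq \|f\|_{L^{p_{0}}}\|u\|_{L^{p_{0}'}}$, and combining with the weighted $L^{p_{0}'}$-resolvent estimate $\|u\|_{L^{p_{0}'}}\leq C\||x|f\|_{L^{2}}$ of Section \ref{section6} (obtained there by a duality argument from (\ref{lap2}) and the magnetic Hardy/Sobolev inequalities) yields $\mu_\lambda(S^{d-1})\leq C\lambda^{1/2}\|f\|_{L^{p_{0}}}\||x|f\|_{L^{2}}$. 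I expect the main technical obstacle to lie in Step (ii), specifically in justifying the vanishing of the $(\Dr u)\bar u$ boundary integral through Lemma \ref{lema2.5}: this is the only place where the global decay of the outgoing resolvent genuinely enters, and the argument would collapse without the sharp estimates (\ref{lap1})--(\ref{lap2}) of Theorem \ref{Theorem1}.
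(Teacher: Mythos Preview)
Your proposal is correct and follows the same route as the paper: Proposition \ref{lema2.8} for the existence of the weak limit, testing the equation against $\bar u$ on balls and taking imaginary parts for the total-mass identity, and the Riesz representation theorem for the measure. The only minor variation is in Step (ii): the paper establishes the vanishing of the boundary term by a direct $\liminf$ argument from (\ref{lap1})--(\ref{lap2}) and then upgrades the $\liminf$ to an actual limit via Proposition \ref{lema2.8} with $\phi\equiv 1$, whereas you invoke Lemma \ref{lema2.5} with $v=\rho u$ to obtain the full limit directly; both arguments are valid (and you could in fact take $v=u$ there, since (\ref{lap1}) and (\ref{lap2}) already furnish the hypotheses of that lemma).
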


\begin{proof}

Firstly, observe that by definition of $\mathcal{F}(\lambda, r)f$, we have
\begin{equation}
\lambda^{-1/2}\int_{|x|=1} |\mathcal{F}(\lambda, r)f(\omega)|^{2} d\sigma(\omega) = \lambda^{1/2} \int_{|x|=r} |u(r\omega)|^{2} d\sigma_{r}.
\end{equation}
Secondly, if we multiply equation (\ref{resjuan}) by $\bar{u}$, integrate over the ball $\{|x| \leq r\}$ for some $r>0$ and take the imaginary part, we obtain
\begin{align}
\Im \int_{|x|=r} \frac{x}{|x|} \cdot (\D u - i\lambda^{1/2}\frac{x}{|x|}u) \bar{u} + \lambda^{1/2}\int_{|x|=r} |u|^{2} & = \Im \int_{|x|\leq r} f\bar{u}.
\end{align}
Computing the $lim$ $inf$ as $r\to \infty$ on the both sides of the above identity, by (\ref{lap1}) and (\ref{lap2}) it follows that
\begin{equation}
\lambda^{1/2} \lim_{r\to \infty} \inf \int_{|x|=r} |u|^{2} = \Im \int_{\Rd} f\bar{u}.
\end{equation}
As a consequence, we get
\begin{equation}\label{limphi}
\lim_{r\to \infty} \inf \int_{|x|=1} |\mathcal{F}(\lambda,r)f(\omega)|^{2} = \lambda^{1/2}\Im \int f\bar{u}.
\end{equation}
In addition, by the existence of the limit (\ref{weaklimit}), taking $\phi(\omega) = 1$, it may be concluded that
\begin{equation}
\lim_{r\to \infty} \int_{|x|=1} |\mathcal{F}(\lambda,r)f(\omega)|^{2} = \lambda^{1/2} \Im \int f\bar{u}.
\end{equation}

For each $\phi \in C^{\infty}(S^{d-1})$, let us define $T: C^{\infty}(S^{d-1}) \to \mathbb{C}$ such that
$$
T\phi = \lim_{r\to \infty} \int_{|x|=1} |\mathcal{F}(\lambda, r)f(\omega)|^{2} \phi(\omega) d\sigma(\omega).
$$
Note that since $\Lambda=\lambda^{1/2} \Im \int f\bar{u}$ is uniformly bounded, it follows that
$$
|T\phi| \leq \Lambda \Vert \phi \Vert_{L^{\infty}}.
$$ 
In addition, since $C^{\infty}(S^{d-1})$ is dense in $C(S^{d-1})$, we can extend the operator $T$ for any $\phi \in C(S^{d-1})$. Thus by the Riesz representation theorem we conclude that there exists a measure $\mu_\lambda$ on $S^{d-1}$ such that
$$
T\phi = \int_{|x|=1} \phi(\omega) d\mu_\lambda.
$$
Thus we get (\ref{mu}). In particular,
$$
\mu_\lambda(S^{d-1}) = \int_{|x|=1} d\mu_\lambda= \lim_{r \to \infty} \int_{|x|=1} |\mathcal{F}(\lambda, r)f(\omega)|^{2}.
$$
Finally estimate \eqref{xxx} follows from \eqref{Morrey}. 
\end{proof}

In our next result we use the sharp Sommerfeld condition (\ref{keySRC}) and the a-priori estimate (\ref{Morrey}) to prove that $\mu_\lambda$ is an absolutely continuous measure. 
We emphasize that for this purpose, we need more restrictions on the potentials and we will work under the hypotheses of Theorem \ref{TheoremSRC}.

\begin{thm}\label{cross-section}
Under the hypotheses of Theorem \ref{TheoremSRC}, there exists  a function $\mathcal{G}_{\lambda}$ in $L^{1}(S^{d-1})$ such that 
$$\lim_{r\to\infty}|\F(\lambda, r)f|^{2}=\mathcal{G}_{\lambda} \quad \quad \text{in}  \quad \quad  L^{1}(S^{d-1}).$$
Moreover, for some $C>0$ depending on $\lambda_0$ and $p_0$ with $\frac{1}{p_{0}} + \frac{1}{2} - \frac{1}{d} = 1$, yields
\begin{equation}\label{gestimate}
\int_{S^{d-1}} \mathcal{G}_{\lambda}(\omega) d\sigma(\omega)=\lambda^{1/2} \Im \int f\bar{u} \leq C \min\bigl(N_{1}(f))^{2},\lambda^{1/2}\Vert f \Vert_{L^{p_{0}}} \Vert |\cdot|f \Vert_{L^{2}}\bigr).
\end{equation}
\end{thm}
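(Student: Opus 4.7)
The strategy is to upgrade the weak-$*$ convergence of measures established in Theorem \ref{azkenx} to strong $L^{1}(S^{d-1})$ convergence. The additional input that makes this possible is the sharp Sommerfeld radiation condition \eqref{keySRC} combined with the Morrey--Campanato estimate \eqref{Morrey}, which together produce enough regularity on suitable spheres to apply Rellich's compactness theorem.

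First, I would extract a sequence of ``good'' radii $r_{n}\to\infty$. Write $v=e^{-i\sqrt{\lambda}|x|}u$ and $g_{r}(\omega):=\mathcal{F}(\lambda,r)f(\omega) = \lambda^{1/2}r^{(d-1)/2}v(r\omega)$. The two estimates yield, uniformly in $R\geq 1$,
\[
\int_{R\leq |x|\leq 2R}|u|^{2}\leq CR/\lambda, \qquad \int_{R\leq |x|\leq 2R}|\nabla_{A}v|^{2}\leq C/R.
\]
A pigeonhole/union-bound argument on the annuli $[2^{n},2^{n+1}]$ then produces $r_{n}\in [2^{n},2^{n+1}]$ along which simultaneously
\[
\int_{|x|=r_{n}}|u|^{2}\, d\sigma\leq C/\lambda, \qquad \int_{|x|=r_{n}}|\nabla_{A}v|^{2}\, d\sigma\leq C/r_{n}^{2}.
\]

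Second, I would convert these bounds into a uniform $H^{1}(S^{d-1})$ bound for $|g_{n}|:=|g_{r_{n}}|$. Using $d\sigma_{r}=r^{d-1}\, d\omega$, the first bound gives $\||g_{n}|\|_{L^{2}(S^{d-1})}^{2}\leq C$. For the gradient, the diamagnetic inequality $|\nabla|v||\leq |\nabla_{A}v|$ combined with the identity $|\nabla_{\omega}w(r\omega)| = r|\nabla^{\bot} w(r\omega)|$ yields
\[
\|\nabla_{\omega}|g_{n}|\|_{L^{2}(S^{d-1})}^{2} = \lambda r_{n}^{2}\int_{|x|=r_{n}}|\nabla^{\bot}|v||^{2}\, d\sigma \leq \lambda r_{n}^{2}\int_{|x|=r_{n}}|\nabla_{A}v|^{2}\, d\sigma \leq C\lambda.
\]
The use of the diamagnetic inequality is crucial here: it bypasses any direct control of the tangential magnetic potential $A^{\bot}$, which may be very singular (as in Aharonov--Bohm).

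Third, by Rellich's theorem the embedding $H^{1}(S^{d-1})\hookrightarrow L^{2}(S^{d-1})$ is compact, so a subsequence $|g_{n_{k}}|\to h$ strongly in $L^{2}(S^{d-1})$, whence $|g_{n_{k}}|^{2}\to h^{2}$ in $L^{1}(S^{d-1})$. Set $\mathcal{G}_{\lambda}:=h^{2}\in L^{1}(S^{d-1})$. To identify this limit, for any $\phi\in C(S^{d-1})$ the strong $L^{1}$ convergence together with Theorem \ref{azkenx} gives
\[
\int_{S^{d-1}}\mathcal{G}_{\lambda}\,\phi\, d\sigma = \lim_{k\to\infty}\int_{S^{d-1}}|g_{n_{k}}|^{2}\,\phi\, d\sigma = \int_{S^{d-1}}\phi\, d\mu_{\lambda},
\]
so $d\mu_{\lambda}=\mathcal{G}_{\lambda}\, d\sigma$. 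Hence $\mathcal{G}_{\lambda}$ is uniquely determined a.e., every subsequential $L^{1}$ limit must agree with it, and the convergence extends to the full sequence of good radii. The estimate \eqref{gestimate} is then immediate: $\int \mathcal{G}_{\lambda}\, d\sigma = \mu_{\lambda}(S^{d-1})$, which is already bounded by \eqref{xxx}.

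The main obstacle I expect is the bookkeeping in Step~2: tracking the correct powers of $r_{n}$ and $\lambda$ when converting dyadic annular bounds in $\mathbb{R}^{d}$ into a sphere $H^{1}$ bound, and verifying that the diamagnetic inequality indeed allows one to replace $|\nabla^{\bot}|v||$ by $|\nabla_{A}v|$ without losing the uniformity. The extraction of a common good radius from two independent averaged bounds is routine but must be done with care so that the constants depend only on $\lambda_{0}$ and the norms of $f$.
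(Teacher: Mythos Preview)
Your proposal is correct and follows essentially the same route as the paper: extract a sequence of good radii from the sharp Sommerfeld bound \eqref{keySRC} and the Morrey--Campanato estimate \eqref{Morrey}, use the diamagnetic inequality to obtain a uniform $H^{1}(S^{d-1})$ bound on $|g_{r_{n}}|$, apply Rellich's compactness theorem, and identify the $L^{1}$ limit with the measure $\mu_{\lambda}$ of Theorem~\ref{azkenx} to conclude both uniqueness and the estimate \eqref{gestimate}. The only cosmetic difference is that you spell out the pigeonhole extraction of simultaneous good radii more explicitly than the paper does.
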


\begin{proof}
Our proof starts with the observation that from (\ref{keySRC}) and (\ref{Morrey}) there exists a sequence $\{r_{n}\}_{n\in\mathbb{N}}$ that tends to infinity such that for each $r_{n}$
\begin{equation}\label{ffp1}
r_{n}^{2} \int_{|x|=r_{n}} \left|\nabla_{A} u - i\lambda^{1/2}\frac{x}{|x|}u\right|^{2}d\sigma_{r} < +\infty.
\end{equation}
and
\begin{equation}\label{ffp2}
\lambda\int_{|x|=r_{n}} |u|^{2}d\sigma_{r}< +\infty,
\end{equation}
respectively.

We write $x=r_{n}\omega$, where $\omega\in S^{d-1}$ and we take
\begin{equation}
h_{n}(\omega) = \left(\F(\lambda, r_{n})f\right)(\omega).
\end{equation}
Thus we have
\begin{align}
\int_{S^{d-1}}|h_{n}(\omega)|^{2} d\sigma(\omega) & = \lambda r_{n}^{d-1}\int_{S^{d-1}} |u(r_{n}\omega)|^{2}d\sigma(\omega)\notag\\
& = \lambda \int_{|x|=r_{n}} |u(x)|^{2} d\sigma_{r}.\notag
\end{align}
Furthermore, by the diamagnetic inequality (\ref{diamagnetic}) we get
\begin{align}
\int_{S^{d-1}} |\nabla_{\omega} |h_{n}(\omega)||^{2}d\sigma(\omega) & = \lambda r_{n}^{d+1}\int_{S^{d-1}} \left|\nabla^{\tau}\left|e^{-i\lambda^{1/2}r_{n}}u(r_{n}\omega)\right|\right|^{2}d\sigma(\omega)\notag\\
& \leq \lambda r_{n}^{2}\int_{|x|=r_{n}} \left|\nabla\left| e^{-i\lambda^{1/2}r_{n}}u\right|\right|^{2} d\sigma_{r}\notag\\
& \leq \lambda r_{n}^{2}\int_{|x|=r_{n}}  \left|\nabla_{A}\left( e^{-i\lambda^{1/2}r_{n}}u\right)\right|^{2} d\sigma_{r}.\notag
\end{align}
Hence it may be concluded that $|h_{n}| \in H^{1}(S^{d-1})$. Now, by the Rellich theorem we deduce that $\exists  g_{\lambda} \in L^{2}(S^{d-1})$ such that $|h_{n}| \to g_{\lambda}$ in $L^{2}(S^{d-1})$ and therefore 
that $|h_{n}| ^2\to g_{\lambda}^2$ in $L^{1}(S^{d-1})$. From Theorem \ref{azkenx} we conclude that the limit does not depend on the sequence $r_n$
and that  $\mathcal{G}_{\lambda} = g_{\lambda}^{2}$ . 

Let us prove now (\ref{gestimate}). By Theorem \ref{azkenx} again we obtain
\begin{align}\label{gg5}
\int_{|x|=1}  g^2_{\lambda}(\omega) d\sigma(\omega) = \int_{|x|=1} \mathcal{G}_{\lambda}(\omega) d\sigma(\omega) = \lambda^{1/2}\Im\int_{\Rd} f\bar{u},
\end{align}
which combining with (\ref{Morrey}) gives the desired estimate and the proof is complete.
\end{proof}

\begin{remark}\label{constfar}
Observe that in the case that $A \equiv 0$, then $h_{n} \in S^{d-1}$ and by the same arguments, we deduce that there exists $g_{\lambda} \in L^{2}(S^{d-1})$ such that $h_{n} \to g_{\lambda}$ in $L^{2}(S^{d-1})$. 
\end{remark}

\subsection{Spectral representation}\label{sectionspectral}

We will prove some spectral properties of the magnetic Schr\"odinger operator 
$$-H_{A} = -(\D^{2} + V)$$
 in $\Rd$. 

Let $E(B)$ the spectral measure associated with $H_{A}$, where $B$ varies over all Borel sets of the reals. From Theorem \ref{LAPchapter4} it follows that for any $f$ such that $\Vert (1+|\cdot|)f \Vert_{L^{2}}<\infty$ a unique solution $u_{\pm}(\lambda, f)$ of the equation 
$$
(H_{A}+\lambda)u_{\pm} = f
$$
satisfying the corresponding Sommerfeld radiation condition 
$$\int |\D(e^{\mp i\lambda^{1/2}|x|}u_{\pm})|^{2} < \infty$$ 
can be constructed as the limit
\begin{equation}\label{lll}
u_{\pm}(\lambda, f) = \lim_{\varepsilon \to 0}\, R(\lambda\pm i\varepsilon)f \quad \quad \text{in} \quad \quad (H^{1}_{A})_{loc},
\end{equation}
where $u(\lambda \pm i\varepsilon) = R(\lambda \pm i\varepsilon)$. Let $\Delta=(\lambda_{1}, \lambda_{2})$ where $0<\lambda_{1} < \lambda_{2} < \infty$. Then, employing the following well-known Stone's formula (see \cite{DS})
\begin{equation}\notag
(E(\Delta)f, f) =\lim_{\varepsilon \to 0} \lim_{\nu\to 0} \frac{1}{2\pi i}\int_{\lambda_{1}-\nu}^{\lambda_{2}+\nu} (R(\lambda - i\varepsilon)f - R(\lambda + i\varepsilon)f, f)\, d\lambda \quad \quad (f\in L^{2}),
\end{equation}
by (\ref{lll}), the fact that $(R(\lambda - i\varepsilon)f - R(\lambda + i\varepsilon)f, f)$ is uniformly bounded for $(\lambda, \varepsilon) \in [\lambda_{1}, \lambda_{2}] \times [0,1]$, together with the Lebesgue dominated convergence theorem, one obtains
\begin{equation}
(E(\Delta)f, f) = \frac{1}{2\pi i} \int_{\Delta} (u_{-}(\lambda,f) - u_{+}(\lambda,f), f)\, d\lambda.
\end{equation}

Let us show the continuity of $(u_{-}(\lambda,f) - u_{+}(\lambda,f), f)$ for $f$ with compact support. Let $\Omega \subset \Rd$ and $supp \, f \subset \Omega$. Let us define $\eta$ such that $\eta(x)=1$ if $x\in \Omega$ and $\eta(x)=0$ when $x \notin \Omega$. Note that $(u_{-}(\lambda,f) - u_{+}(\lambda,f), f) = 2i\Im \int f \bar{u_{\lambda}}$, where $u_{\lambda}=R(\lambda + i0)f$. Let us denote $w= u_{\lambda} - u_{\mu}$ and take $v=\eta w$. Then it follows that
\begin{align}\notag
H_{A}w + \lambda w = (\lambda-\mu)\eta u_{\mu} -2\nabla \eta \cdot \D w - \Delta \eta w.
\end{align}
Thus, we have
\begin{align}
\Im \int f(\bar{u}_{\lambda} - \bar{u}_{\mu})  \leq C\left( \frac{(\lambda - \mu)^{1/2}}{\lambda^{1/2}} + \int_{\Omega} (|\D w|^{2} +|w|^{2}) \right)N(f).\notag
\end{align}
From Rellich's theorem we obtain $\Im \int f (\bar{u}_{\lambda} - \bar{u}_{\mu}) \to 0$ as $\lambda \to \mu$.
Using a simple density argument  and  Theorem \ref{Theorem1} and \eqref{xxx} we have proved the following result.

\begin{thm}
Under the hypotheses of Theorem \ref{LAPchapter4}, 
\begin{equation}
(f, f) = \frac{1}{2\pi i} \int_{0}^{\infty} (u_{-}(\lambda,f) - u_{+}(\lambda,f), f)\, d\lambda= \frac{1}{\pi } \int_{0}^{\infty} \lambda^{-1/2}\mu_\lambda (S^{d-1}) \,d\lambda.
\end{equation}
\end{thm}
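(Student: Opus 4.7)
My plan is to continue where the excerpt leaves off. The text has already applied Stone's formula and the LAP to show $(E(\Delta)f,f)=\frac{1}{2\pi i}\int_{\Delta}(u_{-}(\lambda,f)-u_{+}(\lambda,f),f)\,d\lambda$ for any bounded $\Delta\subset(0,\infty)$ and any $f$ with $\Vert |\cdot|f\Vert_{L^{2}}<\infty$, using Theorem \ref{Theorem1} together with the magnetic Hardy inequality (\ref{A.111}) as the dominating bound in Lebesgue's theorem, and has verified continuity of the integrand via Rellich compactness. What remains is to recognise the second equality on each $\Delta$, to pass from $\Delta$ to $(0,\infty)$, and to extend the resulting identity to all $f\in L^{2}(\R^{d})$.

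By self-adjointness of $H_{A}$ one has $R(\lambda-i0)^{\ast}=R(\lambda+i0)$, so $(u_{-}(\lambda,f),f)=\overline{(u_{+}(\lambda,f),f)}$ and hence $(u_{-}-u_{+},f)=2i\,\Im\int f\bar u_{\lambda}$. Identity (\ref{xxx}) then reads $\mu_{\lambda}(S^{d-1})=\lambda^{1/2}\Im\int f\bar u_{\lambda}$, so pointwise in $\lambda$ one has $\frac{1}{2\pi i}(u_{-}-u_{+},f)=\frac{1}{\pi}\lambda^{-1/2}\mu_{\lambda}(S^{d-1})$; integrating over $\Delta$ yields the second equality of the theorem on every bounded sub-interval of $(0,\infty)$.

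To pass to $\lambda_{1}\downarrow 0$ and $\lambda_{2}\uparrow\infty$, I would use that hypothesis (H1) forces the quadratic form estimate $(H_{A}u,u)\leq -(1-\nu)\int|\D u|^{2}\leq 0$, so the spectrum of $-H_{A}$ sits in $[0,\infty)$; Theorem \ref{unicidad3} excludes positive embedded eigenvalues, and the LAP together with the integrability furnished by (\ref{xxx}) rules out any singular continuous component or point mass at the origin. Consequently $(E((0,\infty))f,f)=(f,f)$. Since both sides of the identity on $\Delta$ are integrals of the non-negative density $\pi^{-1}\lambda^{-1/2}\mu_{\lambda}(S^{d-1})$, monotone convergence transports the equality simultaneously to the improper integrals on $(0,\infty)$, and yields as a by-product the finiteness of the right-hand side. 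A density argument from the class $\{f:\Vert |\cdot|f\Vert_{L^{2}}<\infty\}$, which is dense in $L^{2}(\R^{d})$, to arbitrary $f\in L^{2}$ then completes the proof once one notes that both sides are continuous quadratic forms on $L^{2}$.

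The most delicate technical point will be the verification that no spectral mass is lost at the endpoints $\lambda=0$ and $\lambda=\infty$. The tail $\lambda=\infty$ is automatic from the total finiteness of the spectral measure $(f,f)$. Near $\lambda=0$, however, the crude bound $\mu_{\lambda}(S^{d-1})\leq C N_{1}(f)^{2}$ combined with the weight $\lambda^{-1/2}$ is non-integrable; the resolution is the refined half of (\ref{xxx}), $\mu_{\lambda}(S^{d-1})\leq C\lambda^{1/2}\Vert f\Vert_{L^{p_{0}}}\Vert |x|f\Vert_{L^{2}}$, whose $\lambda^{1/2}$-vanishing exactly cancels the weight and rules out a low-energy concentration of the cross-section. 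This refined bound is what legitimates both the absolute convergence of $\int_{0}^{\infty}\lambda^{-1/2}\mu_{\lambda}(S^{d-1})\,d\lambda$ and the continuity of the right-hand side in $f$ used for the final density step.
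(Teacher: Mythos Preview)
Your approach coincides with the paper's: Stone's formula on bounded intervals, the limiting absorption principle to remove $\varepsilon$, continuity of the integrand, identification of the density with $\lambda^{-1/2}\mu_\lambda(S^{d-1})$ via (\ref{xxx}), and a density argument to reach general $f$. You correctly supply one step the paper leaves implicit, namely that $E((0,\infty))=I$; the form bound from (H1) gives $-H_A\geq (1-\nu)\int|\D u|^{2}\geq 0$, and combined with the diamagnetic inequality this also excludes a null eigenvalue, so nothing more is needed there (absence of embedded eigenvalues or singular continuous spectrum is irrelevant for this identity).

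One slip to correct: $\lambda^{-1/2}$ \emph{is} integrable near $0$, so your ``crude'' bound $\mu_\lambda(S^{d-1})\leq C\,N_1(f)^2$ already yields $\int_0^1\lambda^{-1/2}\mu_\lambda\,d\lambda<\infty$; the refined half of (\ref{xxx}) is not needed for the low-energy tail. In fact the whole ``delicate technical point'' paragraph can be dropped: once you have the identity on every bounded $\Delta\subset(0,\infty)$ and know $E((0,\infty))=I$, monotone convergence of the spectral measure on the left and of the nonnegative integrand on the right gives the full-line identity and the finiteness of the integral simultaneously, without any a priori integrability estimate.
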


If instead of Theorem \ref{LAPchapter4} and \eqref{xxx} we use Theorem \ref{TheoremSRC} and \eqref{gestimate} we obtain the following.

\begin{thm}
Under the hypotheses of Theorem \ref{TheoremSRC}, let $E(0,\infty)$ be the projection onto the positive part of the spectrum of $-H_{A}$. Then 
\begin{align}
(E(0,\infty) f, f) &=\frac{1}{2\pi i} \int_{0}^{\infty} (u_{-}(\lambda,f) - u_{+}(\lambda,f), f)\, d\lambda \notag\\
&=\frac{1}{\pi } \int_{0}^{\infty}\lambda^{-1/2}\Vert \mathcal G_\lambda(\omega)\Vert_{L^{1}(S^{d-1})}\, d\lambda.\notag
\end{align}

\end{thm}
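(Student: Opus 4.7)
The strategy is to mirror the proof of the preceding theorem, substituting the surface-measure bound \eqref{xxx} and Theorem~\ref{LAPchapter4} with their refined analogues: the $L^1$ bound \eqref{gestimate} for the cross-section $\mathcal{G}_\lambda$ and the limiting absorption principle that flows from Theorem~\ref{TheoremSRC} (as indicated in the second remark following Theorem~\ref{LAPchapter4}). The essential difference from the preceding theorem is that these estimates now hold only for $\lambda \geq \lambda_0 > 0$, which is precisely why the left-hand side carries the projection $E((0,\infty))$ rather than the identity.

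For the first equality, I would start from Stone's formula applied to a bounded interval $\Delta=(\lambda_1,\lambda_2)$ with $0<\lambda_1<\lambda_2<\infty$,
\begin{equation*}
(E(\Delta)f,f)=\lim_{\varepsilon\to 0}\frac{1}{2\pi i}\int_\Delta \bigl(R(\lambda-i\varepsilon)f-R(\lambda+i\varepsilon)f,f\bigr)\,d\lambda.
\end{equation*}
On $[\lambda_1,\lambda_2]$ the version of the LAP that follows from Theorem~\ref{TheoremSRC} guarantees that $u_\pm(\lambda,f)=\lim_{\varepsilon\to 0^+}R(\lambda\pm i\varepsilon)f$ exists in $(H^1_A)_{loc}$ for each $\lambda>0$, while \eqref{gestimate} controls $|(u_--u_+,f)|=2|\Im\int f\bar u_+|$ uniformly on $[\lambda_1,\lambda_2]$ by a constant depending on $\lambda_1$ and on $f$. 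Dominated convergence then passes the $\varepsilon\to 0^+$ limit inside the integral. The continuity of $\lambda\mapsto (u_-(\lambda,f)-u_+(\lambda,f),f)$ needed to make this a legitimate pointwise integrand is established exactly as in the proof of the preceding theorem: for $f$ with compact support, the difference $w=u_\lambda-u_\mu$ times a cutoff solves $H_A v+\lambda v$ equal to a small perturbation as $\mu\to\lambda$, and Rellich compactness closes the argument. Monotone convergence of the spectral projections as $\lambda_1\to 0^+$ and $\lambda_2\to\infty$, together with a density argument in $f$, gives
\begin{equation*}
(E((0,\infty))f,f)=\frac{1}{2\pi i}\int_0^\infty \bigl(u_-(\lambda,f)-u_+(\lambda,f),f\bigr)\,d\lambda,
\end{equation*}
the integral being finite a posteriori since it does not exceed $(f,f)$.

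For the second equality, I would use self-adjointness of $H_A$, which gives $R(\lambda-i\varepsilon)=R(\lambda+i\varepsilon)^*$ and hence $(u_-(\lambda,f)-u_+(\lambda,f),f)=2i\,\Im\int f\bar u_+(\lambda,f)$. Invoking the identity \eqref{gg5} obtained in the course of proving Theorem~\ref{cross-section}, which states that $\|\mathcal{G}_\lambda\|_{L^1(S^{d-1})}=\lambda^{1/2}\,\Im\int f\bar u$, one gets
\begin{equation*}
(u_-(\lambda,f)-u_+(\lambda,f),f)=2i\lambda^{-1/2}\|\mathcal{G}_\lambda\|_{L^1(S^{d-1})},
\end{equation*}
and division by $2\pi i$ followed by integration in $\lambda$ produces the stated formula.

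The main obstacle is the $\varepsilon\to 0^+$ passage in Stone's formula together with the extraction of the endpoints $\lambda=0$ and $\lambda=\infty$. Since the constant $C(\lambda_0)$ in Theorem~\ref{TheoremSRC} deteriorates as $\lambda_0\to 0^+$, the pointwise LAP and the bound on the integrand are available only on compact subintervals of $(0,\infty)$; the extension to the full half-line is then forced by monotone convergence on the left-hand side and by the $L^2$-finiteness of $f$ on the right. The Rellich compactness step used to secure continuity of $\lambda\mapsto (u_-(\lambda,f)-u_+(\lambda,f),f)$, although standard, remains the most delicate piece and is borrowed verbatim from the preceding proof.
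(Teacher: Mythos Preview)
Your proposal is correct and follows exactly the paper's approach: the paper's entire proof is the single sentence ``If instead of Theorem~\ref{LAPchapter4} and \eqref{xxx} we use Theorem~\ref{TheoremSRC} and \eqref{gestimate} we obtain the following,'' and you have simply unpacked what that substitution entails. Your observation that the restriction $\lambda\geq\lambda_0>0$ in Theorem~\ref{TheoremSRC} is what forces the projection $E(0,\infty)$ in place of the identity is a helpful clarification that the paper leaves implicit.
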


\section{Weighted inequalities and applications}\label{section6}

The last section provides some new estimates which are a consequence of the main results of this work. On the one hand, by the uniform resolvent estimate (\ref{alpha0}) we deal with a weighted Sobolev inequality that allows us to solve the corresponding evolution problem. On the other hand, from the sharp Sommerfeld condition (\ref{keySRC}) and the a-priori estimate (\ref{Morrey}) we give a new estimate for $|\nabla |u|^2|.$
\subsection{Weighted inequalities}\label{sectionevo}

In the sequel, we will study the evolution problem associated to the magnetic Schr\"odinger operator $H_{A} = \D^{2} + V$. 

Let us define a Sobolev weight $\omega$ as a positive function $\omega: \Rd \to [0, \infty)$ such that
\begin{equation}\label{sobolevweight}
\int |g|^{2} \omega \leq c(\omega) \int |\nabla g|^{2}.
\end{equation}

If $\omega$ belongs to the Morrey-Campanato class $\mathcal{L}^{2,p}(\Rd)$ with $p>1$, then $|\omega|$ is a Sobolev weight. Here, the Morrey-Campanato class $\mathcal{L}^{\alpha, p}(\Rd)$ for $\alpha >0$ and $1\leq p\leq d/\alpha$ is defined by
$$
{\tiny \mathcal{L}^{\alpha, p}(\Rd) = \left\{ V\in L^{p}_{loc}(\Rd) : \sup_{r>0} \left(\frac{1}{r^{d-p\alpha}}\int_{|x|<r} |V(x)|^{p} dx \right)^{1/p} <\infty \right\}.}
$$
Thus
$$
\Vert V \Vert_{\mathcal{L}^{\alpha, p}(\Rd)} : =  \sup_{r>0} \left(\frac{1}{r^{d-p\alpha}}\int_{|x|<r} |V(x)|^{p} dx \right)^{1/p}.
$$
In particular, $\omega(x)=\frac{1}{|x|^{2}} \in \mathcal{L}^{2, p}(\Rd)$ with $p<d/2$.

We begin by proving a weighted estimate for the stationary case. 

\begin{thm}
Under the hypotheses of Theorem \ref{Theorem1}, let $u$ be a solution of the equation
\begin{equation}
H_{A} u + (\lambda \pm i\varepsilon) u = f
\end{equation}
and $\omega$ a Sobolev weight. Then there exists a constant $C = C(\omega)$ independent of $\varepsilon$ and $\lambda$ such that the following a-priori estimate holds
\begin{equation}\label{w1}
\int \left| R(\lambda \pm i\varepsilon)f \right|^{2} \frac{\omega^{1/2}}{|x|} \leq C \int |f|^{2} \frac{|x|}{\omega^{1/2}}.
\end{equation}
\end{thm}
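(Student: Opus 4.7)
The plan is to deduce the weighted estimate \eqref{w1} from Theorem \ref{Theorem1} by combining the Sobolev weight property, the diamagnetic inequality, a duality argument for the resolvent, and complex interpolation of weighted $L^2$ spaces.

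As a first step, I would establish the auxiliary bound
\begin{equation}\label{auxweight}
\int |u|^2\,\omega \leq C(\omega) \int |x|^2 |f|^2, \qquad u = R(\lambda\pm i\varepsilon)f.
\end{equation}
Setting $v = e^{\mp i\lambda^{1/2}|x|}u$, so that $|v|=|u|$, Theorem \ref{Theorem1}(i) controls $\int |\D v|^2$ when $0<\varepsilon<\lambda$, while Theorem \ref{Theorem1}(iii) controls $\int |\D u|^2$ when $\lambda\leq \varepsilon$; in both cases the diamagnetic inequality \eqref{diamagnetic} gives $\int |\nabla|u||^2 \leq C\int|x|^2|f|^2$, and applying the Sobolev weight hypothesis \eqref{sobolevweight} to $|u|$ yields \eqref{auxweight}.

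The second step is to dualize \eqref{auxweight}. Estimate \eqref{auxweight} asserts that the operator $g\mapsto \omega^{1/2} R(\lambda\pm i\varepsilon)(|x|^{-1}g)$ is bounded on $L^2(\Rd)$. Taking its Hilbert-space adjoint and using $R(\lambda\pm i\varepsilon)^{*}=R(\lambda\mp i\varepsilon)$ gives, after relabelling, that for $u = R(\lambda\pm i\varepsilon)f$,
\begin{equation}\label{auxdual}
\int \frac{|u|^2}{|x|^2} \leq C(\omega) \int \frac{|f|^2}{\omega}.
\end{equation}

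With \eqref{auxweight} and \eqref{auxdual} in hand, I would conclude by complex interpolation of weighted $L^2$ spaces (Stein--Weiss). The two bounds say that for each fixed $\lambda,\varepsilon$ the resolvent is bounded as
\[
R(\lambda\pm i\varepsilon)\colon L^2(|x|^{2}\,dx) \longrightarrow L^2(\omega\,dx), \qquad R(\lambda\pm i\varepsilon)\colon L^2(\omega^{-1}\,dx) \longrightarrow L^2(|x|^{-2}\,dx),
\]
with constants depending only on $\omega$. Interpolating with parameter $\theta=1/2$ produces the domain weight $|x|\,\omega^{-1/2}$ and the range weight $\omega^{1/2}|x|^{-1}$, which is exactly \eqref{w1}. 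The only delicate point is keeping careful track of signs in the duality step so that the same sign of $i\varepsilon$ appears in the interpolated inequality; the remainder of the argument is formal.
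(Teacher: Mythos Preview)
Your proposal is correct and follows essentially the same approach as the paper: first derive $\int |u|^{2}\omega \leq C(\omega)\int |x|^{2}|f|^{2}$ from Theorem~\ref{Theorem1} via the diamagnetic inequality and the Sobolev weight property, then dualize to obtain $\int |u|^{2}|x|^{-2} \leq C(\omega)\int |f|^{2}\omega^{-1}$, and finally interpolate the two bounds to reach \eqref{w1}. Your remark on the sign flip under duality is a useful point that the paper glosses over; since the original estimate holds for both signs of $i\varepsilon$, the dual inequality is also available for both signs and the interpolation goes through.
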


\begin{proof}
By Theorem \ref{Theorem1}, if $0<\varepsilon < \lambda$ we have
\begin{equation}\label{33}
\int \left| \D(e^{\mp i \lambda^{1/2} |x|}R(\lambda \pm i\varepsilon))f )\right|^{2} \leq C \int |x|^{2}|f|^{2}
\end{equation}
and for $\lambda \leq \varepsilon$,
\begin{equation}\label{44}
\int \left| \D(R(\lambda \pm i\varepsilon))f \right|^{2} \leq C \int |x|^{2}|f|^{2},
\end{equation}
where $C$ is independent of $\varepsilon$ and $\lambda$. Since $\omega$ is a Sobolev weight, by the diamagnetic inequality (\ref{diamagnetic}), (\ref{33}) and (\ref{44}), yields
\begin{equation}\label{55}
\int \left| R(\lambda \pm i\varepsilon)f \right|^{2} \omega \leq Cc(\omega)\int \left| \D(R(\lambda \pm i\varepsilon))f \right|^{2} \leq C(\omega) \int |x|^{2}|f|^{2},
\end{equation}
for any $\lambda \in \R$, $\varepsilon >0$. By duality, we obtain
\begin{equation}\label{dual}
\int \frac{\left|R(\lambda \pm i\varepsilon)f  \right|^{2}}{|x|^{2}} \leq C  \int |f|^{2}\omega^{-1}.
\end{equation}
Interpolating (\ref{55}) and (\ref{dual}) follows (\ref{w1}) and the proof is complete.
\end{proof}

\begin{remark}
Estimate (\ref{w1}) implies that the operator $Tf(x) = \frac{\omega^{1/4}}{|x|^{1/2}}f(x)$ is H-supersmooth, which means (see \cite{KaYa}) that for all $f \in D(T) \subset L^{2}(\Rd)$, $\lambda \in \R$ and $\varepsilon >0$, there exists a positive constant independent of $\lambda$ and $\varepsilon$ such that
\begin{equation}
\left|\left(R(\lambda \pm i\varepsilon)Tf, Tf\right)\right| \leq C\Vert f \Vert_{L^{2}(\Rd)},
\end{equation}
where $R(z) = (H_{A} + z)^{-1}$ and $(\cdot, \cdot)$ denotes the inner product in $L^{2}(\Rd)$.

In particular, $T$ is $H$-smooth. Indeed, it satisfies
\begin{equation}
\left|\Im \left(R(\lambda \pm i\varepsilon)Tf, Tf\right)\right| \leq C\Vert f \Vert_{L^{2}(\Rd)},
\end{equation}
which is equivalent to 
\begin{equation}\label{weighted}
\int_{-\infty}^{+\infty} \int_{\Rd} \left| e^{i H_{A}t}f(x) \right|^{2} \frac{\omega^{1/2}}{|x|} dx dt \leq C(\omega) \int_{\Rd} |f(x)|^{2}dx.
\end{equation}
We refer the reader to \cite{KaYa} for more details.
\end{remark}

\begin{remark}
In the case that $A \equiv 0 \equiv V$, Ruiz and Vega \cite{RV1} show the smoothing estimate
\begin{equation}
\left| \Im\left(R_{0}(\lambda \pm i\varepsilon)\omega_{1}f, \omega_{1}f \right) \right| \leq C \Vert f \Vert_{L^{2}},
\end{equation}
where $R_{0}(z) = (\Delta + z)^{-1}$, for any $\omega_{1} \in \mathcal{L}^{2,q}$ with $q> \frac{d-1}{2}$. As a consequence, it may be concluded the following evolution inequality
\begin{equation}
\int_{-\infty}^{+\infty} \int_{\Rd} \left|e^{it\Delta}f(x) \right|^{2} \omega_{1} dx dt \leq C(\omega_{1}) \int_{\Rd} |f(x)|^{2}dx,
\end{equation}
where $C(\omega_{1})= C c(\omega_{1})$ with $c(\omega_{1}) = \Vert \omega_{1} \Vert_{\mathcal{L}^{2,q}}$.

Note that there exist Sobolev weights $\omega$ such that if $\omega_{1} = \frac{\omega^{1/2}}{|x|}$, then 
\begin{equation}\label{condomega}
\omega \in \mathcal{L}^{2,p} \quad \quad  \text{and} \quad \quad \omega_{1} \notin \mathcal{L}^{2,q} 
\end{equation}
with $p>1$ and $q>\frac{d-1}{2}$, respectively. In order to see that, for any $R>0$ we define
$$
\omega_{0}(x) = \chi_{[0,1]^{d-k}}(x_{1})\chi_{[R, 2R]^{k}}(x_{2}),
$$
where $x=(x_{1}, x_{2})$ with $x_{1}\in \R^{k}$, $x_{2}\in \R^{d-k}$, $1\leq k < d$. Take $\omega(x) = \frac{1}{R^{d\left(1-\frac{1}{p}\right)}}\omega_{0}(x)$. Then a computation shows that for $k=d-2$
$$
\Vert \omega \Vert_{\mathcal{L}^{2,p}} \leq C \quad \quad \text{and} \quad \quad \Vert \omega_{1} \Vert_{\mathcal{L}^{2,q}} \geq C R^{1- \frac{2}{q} -d\left( 1-\frac{1}{p} \right)},
$$
where $C>0$ is independent of $R$. Thus, given $q > \frac{d-1}{2}$ and choosing $p>1$ such that
$$
1 - \frac{2}{q} -d \left(1-\frac{1}{p} \right) >0,
$$
which is true for any $d\geq 5$, then we deduce (\ref{condomega}). The proof can be adapted for $d\geq 4$ \cite{Lem}. In that case, $k$ needs to be fractional. For more details and examples in this direction we refer the reader to \cite{BBCRV}.
\end{remark}

We may now state the result that gives weighted estimates for the evolution problem.

\begin{thm}
Let us consider the initial value problem

\begin{equation}\label{initialvalue}
\left\{ \begin{array}{ll}
( i \partial_{t} + H_{A}) u(t,x) = F(x, t) &(x, t) \in \Rd \times \R \\
u(x,0) = u_{0}(x),
\end{array} \right.
\end{equation}
with $F \in L^{2}\left(\frac{\omega^{1/4}}{|x|^{1/2}}dxdt\right)$, $u_{0} \in L^{2}(\Rd)$. Let $\omega$ be a Sobolev weight. Then, under the hypotheses of Theorem \ref{Theorem1}, the solution $u(x, t)$ satisfies
\begin{align}\label{weightedF}
\int_{-\infty}^{+\infty} \int_{\Rd} \left| e^{i H_{A}t}u_{0}(x) \right|^{2} \frac{\omega^{1/2}}{|x|} dx dt &\leq C(\omega) \int_{\Rd} |u_{0}(x)|^{2}dx\\
& + C(\omega)\int_{-\infty}^{+\infty} \int_{\Rd} |F(x,t)|^{2} \frac{|x|}{\omega^{1/2}}dxdt.\notag
\end{align}
\end{thm}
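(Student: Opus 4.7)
The plan is to deduce (\ref{weightedF}) from the already-established weighted resolvent bound (\ref{w1}), which is equivalent to the $H_A$-supersmoothness (and hence $H_A$-smoothness) of the multiplication operator $Tf(x) = \omega(x)^{1/4}|x|^{-1/2} f(x)$ that the authors have introduced in the previous remark. Concretely, (\ref{weighted}) is exactly the homogeneous version of (\ref{weightedF}), so the first term on the right-hand side of (\ref{weightedF}) is immediate from Kato's abstract theorem \cite{KaYa} once (\ref{w1}) is in hand. Interpreting $u(x,t)$ rather than $e^{itH_A}u_0$ on the left-hand side (the statement as displayed would be otherwise trivially weaker in $F$), only the inhomogeneous contribution requires work.

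For that contribution I would write the solution via Duhamel's formula
\begin{equation*}
u(x,t) = e^{itH_A}u_0 - i\int_0^t e^{i(t-s)H_A} F(\cdot,s)\,ds
\end{equation*}
and then apply the standard $TT^\ast$ argument. The adjoint of the homogeneous smoothing estimate (\ref{weighted}) reads
\begin{equation*}
\left\| \int_{\R} e^{-isH_A} T G(\cdot,s)\,ds \right\|_{L^2(\Rd)}^2 \leq C(\omega)\, \|G\|_{L^2(\R\times\Rd)}^2,
\end{equation*}
and composing with (\ref{weighted}) gives the non-retarded bilinear bound
\begin{equation*}
\left\| T \int_{\R} e^{i(t-s)H_A} T G(\cdot,s)\,ds \right\|_{L^2(\R\times\Rd)}^2 \leq C(\omega)\, \|G\|_{L^2(\R\times\Rd)}^2.
\end{equation*}
Setting $G = T^{-1}F$, i.e.\ $G(x,t) = |x|^{1/2}\omega(x)^{-1/4} F(x,t)$, and recognising that $T^2$ has kernel weight $\omega^{1/2}/|x|$, this translates exactly into the desired weighted bound on $\int_{\R} e^{i(t-s)H_A}F\,ds$, which is the quantity that appears on the right-hand side of (\ref{weightedF}).

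The main technical obstacle, and the only step that is not a formal consequence of $H_A$-smoothness, is the reduction from the non-retarded integral $\int_{\R}$ to the retarded one $\int_0^t$ that actually appears in the Duhamel representation. At the $L^2_t \to L^2_t$ endpoint the Christ-Kiselev lemma does not apply directly, but one can proceed either by the classical Kato-Yajima device of localising in the spectrum of $H_A$ (using the uniform-in-$\lambda,\varepsilon$ character of (\ref{w1})) and applying Christ-Kiselev with arbitrarily small loss before passing to the limit, or equivalently by exploiting the self-adjointness of $T$ together with the explicit representation of the retarded propagator through the boundary values of the resolvent $R(\lambda\pm i0)$, reducing the retarded bound to a uniform bound on the Fourier symbol in $\lambda$—which is precisely (\ref{w1}). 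Either route closes the argument; I would favour the latter since it keeps the entire proof within the resolvent framework already developed in Section~\ref{section2}.
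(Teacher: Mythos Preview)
Your outline is correct and, via the second route you favour, essentially coincides with the paper's argument. The paper treats the homogeneous part exactly as you do (Kato--Yajima smoothness from the resolvent bound), and for the inhomogeneous part it implements your resolvent-based option through the explicit Ruiz--Vega decomposition: when $u_0=0$ one writes $u=v+r$ with
\[
v(x,t)=\int_{\R} e^{it\tau}\,(H_A-\tau+i0)^{-1}\,\widehat{F}(x,\tau)\,d\tau,\qquad r(x,t)=c\,SS^{*}\bigl(\operatorname{sign}t\cdot F\bigr),
\]
where $Sf=e^{itH_A}f$ as a map $L^2(\Rd)\to L^2(\omega^{1/4}|x|^{-1/2}\,dx\,dt)$. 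The term $v$ is controlled directly by Plancherel in $t$ together with the uniform-in-$\lambda$ supersmoothing bound (\ref{w1}), and the correction $r$ is controlled by the $TT^*$ bound you already wrote down; the two pieces combine to reconstitute the retarded Duhamel integral. This is precisely the mechanism you describe abstractly in your second option, made concrete.

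One caution: your first proposed route (spectral localisation plus Christ--Kiselev with vanishing loss and a limiting argument) is not clearly viable, since the Christ--Kiselev constant blows up as the exponents approach the $L^2_t\to L^2_t$ endpoint and there is no obvious uniform control to pass to the limit. The resolvent/supersmoothness route is the one that actually closes, and it is the one the paper takes.
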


\begin{proof}
When $F = 0$, (\ref{weightedF}) follows from (\ref{weighted}). Note that here we only need the estimate for the imaginary part of the resolvent.

Let us assume that $u_{0}=0$. In that case, we can write the solution as follows
\begin{equation}
u(x, t) = v(x, t) + r(x, t)
\end{equation}
with 
\begin{equation}
v(x, t) = \int_{-\infty}^{+\infty} e^{it\tau} (H_{A} - \tau + i0)^{-1} F(x, \hat{\tau}) d\tau,
\end{equation}
where $F(x, \hat{\cdot})$ denotes the Fourier transform in the $t$-variable and $r(x,t)$ is the correction term which aligns the solution with the initial datum and is defined by
\begin{equation}
r(x, t) = c S S^{*} \tilde{F}.
\end{equation}
Here $S: L^{2}(\Rd) \to L^{2}\left(\frac{\omega^{1/4}}{|x|^{1/2}}dxdt \right)$ is given by $Sf = e^{itH_{A}}f$, $S^{*}$ is its adjoint, $\tilde{F}(x, t) = sign \, t\, F(x,t)$ and $c$ is an appropriate constant. Estimate (\ref{weightedF}) for $u_{0}=0$ follows by standard techniques that can be found in \cite{RV1}, \cite{RV2} or \cite{BBR}.  
\end{proof}

\subsection{An inequality for $|\nabla |u|^{2}|$}

In what follows, let $R_{0}>0$ and assume that $supp \, f \subset B(0, R_{0}) : = \{x\in \Rd : |x| \leq R_{0}\}$. 
Let us now make the following observation. Let $\lambda_{0}, \varepsilon >0$ with $\lambda_{0} > 0$. Then, under the hypotheses of Theorem \ref{TheoremSRC}, by Proposition \ref{som}, together with the a-priori estimate (\ref{Morrey}), it may be concluded that there exists a unique solution $u$ of the equation (\ref{resjuan}) such that for any $R \geq 1$ satisfies
\begin{equation}\label{apri}
\lambda|||u|||_{1}^{2} \leq C \min\{R_{0}, 1\}\int |f|^{2}
\end{equation}
and
\begin{align}\label{gradi}
R \int_{|x| \geq R} |\D(e^{-i\lambda^{1/2}|x|}u)|^{2} & \leq C\left[R_{0}^{3} + \min\{R_{0}, 1\} \right]\int |f|^{2},
\end{align} 
where $C=C(\lambda_{0})>0$.

We are now in a position to show the main result of this section.

\begin{thm}\label{theoremgradient}
Let $\lambda_{0}>0$, $R_{0} > 0$ and $supp \, f \subset B(0, R_{0})$. Under the hypotheses of Theorem \ref{TheoremSRC}, the solution of the Helmholtz equation (\ref{resjuan}) satisfies for any $\lambda \geq \lambda_{0}$
\begin{align}
\int_{R \leq |x| \leq 2R}  \left| \nabla |u|^{2}\right|  \leq C \left(R_{0}^{3} + \min\{R_{0}, 1\}  \right)^{\frac{1}{2}}\left(\frac{\min\{R_{0}, 1\} }{\lambda} \right)^{\frac{1}{2}} \int |f|^{2} 
\end{align}
if $R \geq 1$, and 
\begin{align}
\int_{|x|\leq 1} \left| \nabla |u|^{2} \right| \leq \frac{C}{\lambda^{1/2}} \min\{R_{0}, 1\}  \int |f|^{2},
\end{align}
where $C=C(\lambda_{0}) >0$.
\end{thm}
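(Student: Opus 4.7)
The overall plan is to combine the pointwise identity $|\nabla |u|^{2}|=2|u|\,|\nabla |u||$ with the diamagnetic inequality, so that the theorem reduces via Cauchy--Schwarz to estimates already provided by (\ref{apri}) and (\ref{gradi}). The key observation is that, in the exterior region, one should apply the diamagnetic inequality to the twisted function $v:=e^{-i\lambda^{1/2}|x|}u$ (which has $|v|=|u|$), rather than to $u$ directly: this way $|\nabla|u||=|\nabla|v||\leq|\D v|$ inherits the extra $R^{-1/2}$ decay furnished by the sharp Sommerfeld bound (\ref{gradi}), whereas $|\D u|$ alone has no such decay. This twisted-diamagnetic trick is what will cause the otherwise-problematic powers of $R$ to cancel.

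For the case $R\geq 1$, I would estimate
\begin{equation*}
\int_{R\leq|x|\leq 2R}|\nabla|u|^{2}|
\leq 2\Bigl(\int_{|x|\leq 2R}|u|^{2}\Bigr)^{1/2}\Bigl(\int_{|x|\geq R}|\D v|^{2}\Bigr)^{1/2}
\leq 2(2R)^{1/2}|||u|||_{1}\left(\frac{C(R_{0}^{3}+\min\{R_{0},1\})}{R}\right)^{1/2}\|f\|_{L^{2}},
\end{equation*}
using the definition of $|||\cdot|||_{1}$ for the first factor and (\ref{gradi}) for the second. The $R^{1/2}$ and $R^{-1/2}$ cancel, and inserting the bound (\ref{apri}) for $|||u|||_{1}^{2}$ yields the first inequality of the theorem.

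For the interior case $|x|\leq 1$ the twist is unavailable, since (\ref{gradi}) only controls $|\D v|$ in $\{|x|\geq R\}$. Instead, I would apply the diamagnetic inequality directly to $u$ and use Cauchy--Schwarz to obtain
\begin{equation*}
\int_{|x|\leq 1}|\nabla|u|^{2}| \leq 2\Bigl(\int_{|x|\leq 1}|u|^{2}\Bigr)^{1/2}\Bigl(\int_{|x|\leq 1}|\D u|^{2}\Bigr)^{1/2}.
\end{equation*}
The first factor is controlled by $|||u|||_{1}$ and hence by (\ref{apri}). For the second, I would test the equation (\ref{resjuan}) against $\chi^{2}\bar u$, where $\chi\in C^{\infty}_{0}$ is a cutoff with $\chi\equiv 1$ on $\{|x|\leq 1\}$ and $\chi\equiv 0$ on $\{|x|\geq 2\}$; integration by parts and taking real parts gives an identity for $\int\chi^{2}|\D u|^{2}$ whose singular term $\int\chi^{2}V|u|^{2}$ can be absorbed via the magnetic Hardy inequality (\ref{A.111}) (permitted by (H3)), while the remaining terms are treated using (\ref{apri}) and the Morrey-type bound $|||\D u|||_{1}^{2}\leq C(N_{1}(f))^{2}$ from (\ref{Morrey}), the compact support of $f$ being used to reduce $(N_{1}(f))^{2}$ to something comparable with $\min\{R_{0},1\}\|f\|_{L^{2}}^{2}$.

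The main technical obstacle is this interior step: extracting the full linear factor $\min\{R_{0},1\}$ (rather than merely $(\min\{R_{0},1\})^{1/2}$) on the right-hand side requires a careful use of the compact support of $f$ when bounding $\int_{|x|\leq 1}|\D u|^{2}$, since a naive application of (\ref{Morrey}) with the Agmon--Hörmander norm $N_{1}(f)$ would lose a factor $\sqrt{R_{0}}$ in the small-$R_{0}$ regime. The twist-$v$ manoeuvre is unavailable here, so the gain must come entirely from the coercivity identity satisfied by $\D u$ localized to a neighbourhood of $\{\mathrm{supp}\,f\}\cup\{|x|\leq 1\}$.
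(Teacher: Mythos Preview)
Your exterior argument ($R\geq 1$) is exactly the paper's: diamagnetic inequality applied to the twisted function $v=e^{-i\lambda^{1/2}|x|}u$, Cauchy--Schwarz with the $R^{\pm 1/2}$ weights, then (\ref{gradi}) and (\ref{apri}).

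For the interior piece you overcomplicate matters. The paper does not localize the equation or run a coercivity argument; it simply writes
\[
\int_{|x|\leq 1}\bigl|\nabla|u|^{2}\bigr|\leq 2\int_{|x|\leq 1}|\D u|\,|u|\leq 2\,|||\D u|||_{1}\,|||u|||_{1}
\]
and then invokes (\ref{apri}) for \emph{both} factors. The point you seem to have missed is that (\ref{apri}) is not specific to $|||u|||_{1}$: it is obtained from (\ref{Morrey}) together with the support condition on $f$, and (\ref{Morrey}) controls $|||\D u|||_{1}^{2}$ by exactly the same right-hand side $C(N_{1}(f))^{2}$ as it does $\lambda|||u|||_{1}^{2}$. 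Hence $|||\D u|||_{1}^{2}\leq C\min\{R_{0},1\}\int|f|^{2}$ holds by the same reasoning that yields (\ref{apri}), and the product immediately gives the full factor $\min\{R_{0},1\}/\lambda^{1/2}$. Your proposed cutoff-and-test argument would work, but it is unnecessary; the ``technical obstacle'' you identify dissolves once you notice that the Morrey--Campanato estimate already delivers the gradient bound with the correct $R_{0}$-dependence.
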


\begin{proof}
Let $R \geq 1$. Let $|x| \sim R$ denote the set $\{x\in \Rd : R \leq |x| \leq 2R\}$. By the diamagnetic inequality (\ref{diamagnetic}), (\ref{apri}) and (\ref{gradi}), we have
\begin{align}
\int_{|x|\sim R} \left| \nabla |u|^{2} \right| & \leq \left( R\int_{|x|\sim R} |\D(e^{-i\lambda^{1/2}|x|}u)|^{2}\right)^{\frac{1}{2}}\left(\frac{1}{R}\int_{|x|\sim R} |u|^{2} \right)^{\frac{1}{2}}\notag\\
& \leq C\left( R_{0}^{3} + \min\{R_{0}, 1\} \right)^{\frac{1}{2}} \left(\frac{\min\{R_{0}, 1\} }{\lambda} \right)^{\frac{1}{2}}\int |f|^{2}.\notag
\end{align}
In addition, by the diamagnetic inequality (\ref{diamagnetic}) and (\ref{apri}) it follows that
\begin{align}
\int_{|x|\leq 1} \left| \nabla |u|^{2} \right| & \leq 2\int_{|x|\leq 1} |\D u| |u|\notag\\
& \leq 2|||\D u|||_{1} |||u|||_{1} \notag\\
& \leq \frac{C}{\lambda^{1/2}} \min\{R_{0}, 1\} \int |f|^{2}\notag,
\end{align}
and the proof is complete.
\end{proof}

As a consequence, by the Sobolev embedding $W^{1,1}(\Rd) \subset L^{\frac{d}{d-1}}(\Rd)$, we can estimate $\left( \int_{|x|\sim R} |u|^{\frac{2d}{d-1}} \right)^{\frac{d-1}{d}}$ for any $R\geq 1$. To this end, let us define a smooth function $\eta$ such that 
\begin{equation}\notag
\eta(r) = \left\{ \begin{array}{ll}
1 & \textrm{if $R \leq r \leq 2R$},\\
0 & \textrm{if $r < R -1$, $r > 2R +1$}.
\end{array} \right.
\end{equation}
Thus, we deduce that
\begin{align}\notag
\Vert |u|^{2} \chi_{\{|x|\sim R\}}\Vert_{L^{\frac{d}{d-1}}} &\leq \left( \int_{|x|\sim R} \left(\eta(|x|)|u|^{2}\right)^{\frac{d}{d-1}} \right)^{\frac{d-1}{d}}\notag\\
& \leq \frac{1}{R}\int_{|x|\sim R} |u|^{2} + \int_{|x|\sim R} \nabla |u|^{2}\notag\\
& \leq C \left[ \frac{R_{0}}{\lambda} + \left( R_{0}^{3} +\min\{R_{0}, 1\} \right)^{\frac{1}{2}}\frac{\min\{R_{0}, 1\} ^{1/2}}{\lambda^{1/2}}\right] \int |f|^{2}.\notag
\end{align}

\begin{remark}
Note that in the particular case of $R_{0} = 1$ and $\lambda > > 1$, yields
\begin{equation}\notag
 \int_{|x| \sim R} \left|\nabla |u|^{2} \right| \leq \frac{C}{\lambda^{1/2}} \int |f|^{2}
\end{equation}
and
\begin{equation}\notag
\left( \int_{|x|\sim R} |u|^{\frac{2d}{d-1}} \right)^{\frac{d-1}{d}} \leq \frac{C}{\lambda^{1/2}} \int |f|^{2}.
\end{equation}
\end{remark}

\end{document}